\documentclass[reqno]{amsart}

\usepackage{graphicx,pdfsync,amssymb, latexsym,amsfonts,amsbsy,color,subcaption,esint,mathtools,enumerate,nicefrac}
\usepackage{hyperref}
\hypersetup{
colorlinks=true,
linkcolor=blue,
filecolor=blue,      
urlcolor=blue,
citecolor=blue,
}

\usepackage{float}
\restylefloat{table}

\usepackage{enumitem}
\setitemize{itemsep=0.5em}
\setenumerate{itemsep=0.5em}

\usepackage[utf8]{inputenc}
\usepackage[T1]{fontenc}




\usepackage[top=1.1in, bottom=1in, left=1in, right=1in]{geometry}

 
\DeclareMathOperator{\Supp }{supp}
\DeclareMathOperator{\Id }{Id} 
 
\DeclareMathOperator{\D}{div}
\DeclareMathOperator{\Tr}{Tr} 
\DeclareMathOperator{\dist}{dist}

\DeclareMathOperator{\Lin}{lin} 
\DeclareMathOperator{\Osc}{osc} 
\DeclareMathOperator{\Tem}{tem}

\DeclareMathOperator{\Far}{far}

\DeclareMathOperator{\Cor}{cor}


\newtheorem{theorem}{Theorem}[section]
\newtheorem{lemma}[theorem]{Lemma}
\newtheorem{proposition}[theorem]{Proposition}
\newtheorem{definition}[theorem]{Definition}

\newtheorem{remark}[theorem]{Remark}
\newtheorem{conjecture}[theorem]{Conjecture}

\newcommand{\thistheoremname}{}
\newtheorem{genericthm}[theorem]{\thistheoremname}

 \newtheorem*{genericthm*}{\thistheoremname}
\newenvironment{namedthm*}[1]
  {\renewcommand{\thistheoremname}{#1}%
   \begin{genericthm*}}
  {\end{genericthm*}}

\usepackage{times}
\usepackage{setspace}
 
\def \TT  {\mathbb{T}} 
\def \RR {\mathbb{R}}  
\def \NN {\mathbb{N}}  
\def \ZZ {\mathbb{Z}}  

\def \p {\partial}

\def \ep {\varepsilon}

\def \l {\lambda}
\def \L {\Lambda}

\def \ek {\mathbf{e}_{k}}

\def \bwk {\mathbf{ W}_{k}}
\def \bw {\mathbf{ W}}
\def \bok {\mathbf{\Omega}_k}


\numberwithin{equation}{section}


\begin{document}

\title[Sharp nonuniqueness for NSE]{Sharp nonuniqueness  for the Navier-Stokes equations}

\author{Alexey Cheskidov}
\address[Alexey Cheskidov]{Department of Mathematics, Statistics and Computer Science,
University of Illinois At Chicago, Chicago, Illinois 60607 and School of Mathematics, Institute for Advanced Study, 1 Einstein Dr., Princeton, NJ 08540, USA.}

\email{acheskid@uic.edu}

\author{Xiaoyutao Luo}

\address[Xiaoyutao Luo]{Department of Mathematics, Duke University, Durham, NC 27708 and School of Mathematics, Institute for Advanced Study, 1 Einstein Dr., Princeton, NJ 08540, USA.}

 \email{xiaoyutao.luo@duke.edu}


\date{\today}

\begin{abstract}
In this paper, we prove a sharp nonuniqueness result for the incompressible Navier-Stokes equations in the periodic setting.  In any dimension $d \geq 2$ and given any $ p<2$, we show the nonuniqueness of weak solutions in the class $L^{p}_t L^\infty$, which is sharp in view of the classical Ladyzhenskaya-Prodi-Serrin criteria. The proof is based on the construction of a class of non-Leray-Hopf weak solutions. More specifically, for any $ p<2$, $q<\infty$, and $\varepsilon>0$, we construct non-Leray-Hopf weak solutions  $ u \in L^{p}_t L^\infty \cap L^1_t W^{1,q}$ that are smooth outside a set of singular times with Hausdorff dimension less than $\varepsilon$. As a byproduct,  examples of anomalous dissipation in the class $L^{ {3}/{2} - \varepsilon}_t C^{ {1}/{3}} $ are given in both the viscous and inviscid case.

\end{abstract}
\maketitle

\section{Introduction}\label{sec:intro}

\subsection{The incompressible Navier-Stokes equations}
The Navier-Stokes equations are a fundamental mathematical model of incompressible viscous fluid flow, written as
\begin{equation}\label{eq:NSE}
\begin{cases}
\p_t u -   \Delta u + \D( u \otimes u) + \nabla p = 0 &\\
\D u = 0,
\end{cases}
\end{equation}
posed on a spatial domain $\Omega \subset \RR^d$ with suitable boundary conditions. In \eqref{eq:NSE}, $u :[0,T]\times \Omega \to \RR^d $ is the unknown velocity, and $p :[0,T]\times \Omega \to \RR  $ is a scalar pressure. We consider the Cauchy problem of \eqref{eq:NSE} on a time interval $[0,T]$ for some initial data $u_0$ and $T>0$. 

We confine ourselves to the periodic case $\Omega = \TT^d  =\RR^d /\ZZ^d $ in dimension $d \geq 2$ and  consider solutions with zero spacial mean
$$
\int_{\TT^d} u(t,x) \, dx = 0,  
$$
which is propagated under the evolution of the equation \eqref{eq:NSE}.

In this paper, we study the question of uniqueness/nonuniqueness for weak solutions of \eqref{eq:NSE}. The notion of weak solutions refers to that of distributional solutions which solve \eqref{eq:NSE} in the sense of space-time distribution with minimal regularity, cf. \cite{MR316915,MR2838337}.

\begin{definition}\label{def:weak_solutions}
Denote by $\mathcal{D}_T$ the space of divergence-free test function $\varphi \in C^\infty (\RR \times \TT^d ) $ such that  $\varphi =0$ if $t\geq T$.

Let $ u_0 \in L^2(\TT^d)$  be weakly divergence-free\footnote{It is possible to consider more general initial data, such as $L^p $ for some $ 1 \leq p \leq \infty$ as in~\cite{MR316915}.}. A vector field $ u \in L^2 ( [0,T] \times \TT^d)$ is a weak solution of \eqref{eq:NSE} with initial data $u_0$ if the followings hold:
\begin{enumerate}
    \item For $a.e.$ $t\in [0,T]$, $u$ is weakly divergence-free;
    
    \item For any $\varphi \in \mathcal{D}_T$,
\begin{equation}
\int_{\TT^d} u_0(x)\cdot \varphi(0,x ) \, dx = - \int_0^T \int_{\TT^d} u\cdot \big(  \partial_t \varphi+ \Delta \varphi +  u \cdot \nabla \varphi  \big) \, dx dt .
\end{equation}
\end{enumerate}

\end{definition}

In the literature, such solutions are sometimes called ``very weak solutions'' \cite{MR1755865,MR1798753} due to the minimal regularity assumptions of only being square integrable in space-time.  Remarkably, by \cite[Theorem 2.1]{MR316915}, up to possibly redefining $u$ on a set of measure zero in space-time, the above weak formulation is equivalent to the integral equation 
\begin{equation}\label{eq:NSE_integral_formulation}
u = e^{t\Delta}u_0 + \int_0^t e^{(t-s)\Delta} \mathbb{P}\D(u\otimes u) (s)\, ds ,
\end{equation}
where $ e^{t\Delta}$ is the heat semigroup and $\mathbb{P}$ is the Leray projection onto the divergence-free vector fields. Note that the formulation \eqref{eq:NSE_integral_formulation} was also used in a variety of works \cite{MR166499,MR760047,MR1808843} to construct unique solutions (called mild solutions) of \eqref{eq:NSE} when the initial data $u_0$ is critical or subcritical, starting from the work of Fujita  and Kato \cite{MR166499}.

A more physical class of weak solutions, introduced by Leray \cite{MR1555394} and constructed by Leray \cite{MR1555394} in $\RR^3$ and Hopf \cite{doi:10.1002/mana.3210040121} in general domains in $d\geq 2$, is also considered in the literature. 
\begin{definition}\label{def:LHweak_solutions}
	A  weak solution  $u$  of \eqref{eq:NSE} is called Leray-Hopf weak solution if $u \in C_w([0,T]; L^2(\TT^d)) \cap L^2(0,T; H^1(\TT^d))$ and
	\begin{equation}\label{eq:energy_inequality}
	\frac{1}{2} \|u(t) \|_2^2 + \int_0^t\|\nabla u (s)\|_2^2 \,ds \leq \frac{1}{2} \|u(0) \|_2^2,
	\end{equation}
for all $t\in[0,T]$
\end{definition}
The Leray-Hopf weak solutions encode the natural conservation law of \eqref{eq:NSE} regarding the two important physical quantities, the kinetic energy and the dissipation of the energy, and satisfy much better properties than general weak solutions, especially in the most relevant case of 3D, such as Leray's structure theorem~\cite{MR1555394}, weak-strong uniqueness~\cite{MR0136885,MR2237686,MR2838337}, partial regularity~\cite{MR933230,MR673830}, integrability of higher Sobolev norms~\cite{MR3258360}, estimates of potential blowup rates~\cite{MR1992563,MR3475661,MR3465978}. In fact, it is well-known that Leray-Hopf solutions are smooth and unique in 2D.  These nice properties, much desirable from a regularity stand point, make it significantly harder to construct nonunique Leray-Hopf solutions in $d\geq 3$, though partial results~\cite{Lady_enskaja_1969,MR3341963} and numerical evidence~\cite{1704.00560} are available.

The purpose of this paper is to produce sharp counterexamples to the classical Ladyzhenskaya-Prodi-Serrin uniqueness of the Navier-Stokes equations using the convex integration technique. On one hand, while the solutions constructed in this paper live on a borderline of a class of Leray-Hopf solutions, they have unbounded energy globally on $[0,T]$ and do not satisfy \eqref{eq:energy_inequality}. On the other hand, both the energy $\|u(t)\|_2^2 $ and energy dissipation rate $\| \nabla u(t)\|_2^2$ of these solutions are finite not only locally but also in  certain time-averaged senses as well. The latter concept of taking time ensemble of physical quantities is especially essential in Kolmogorov’s theory of turbulence and is measurable experimentally. Nevertheless, we consider a wider class of weak solutions which is very natural from the mathematical point of view, and prove that the classical $L^2_tL^\infty$ uniqueness of Ladyzhenskaya-Prodi-Serrin (see Theorem \ref{thm:FJR_uniqueness}) in this class is sharp. To our knowledge, this is the first sharp counterexample to the classical uniqueness results for the Navier-Stokes equations. It is obviously an open problem whether such a sharp nonuniqueness can be extended to weak solutions with bounded energy or even Leray-Hopf weak solutions.

Since the submition of this paper, in remarkable work~\cite{2112.03116} Albritton, Brué, and Colombo proved that Leray-Hopf solutions are not unique for a forced Navier-Stokes equations, that is \eqref{eq:NSE} with a forcing term on the right-hand side. Their method is completely different from us and is relatd to Vishik's unstable vortex~\cite{Vishik1,Vishik2}. The nonuniqueness of Leray-Hopf solutions of the unforced Navier-Stokes equations remains open.

\subsection{The Ladyzhenskaya-Prodi-Serrin threshold}

We first discuss the uniqueness results in our context. Since the existence of weak solutions was known, there has been a vast body of literature on the uniqueness of weak solutions. For brevity, we do not distinguish the underlying spatial domain, assumptions on the external forces, and smoothness of the initial data for the results that we are going to mention. Instead, our discussion would focus on the core ideas and the scaling threshold. The functional setup is the mixed Lebesgue space $L^p_t L^q = L^p(0,T;L^q(\Omega))$, where $\Omega$ is the spatial domain, such as the whole space $\RR^d$ or a bounded domain.

For Leray-Hopf weak solutions,  Prodi~\cite{Prodi1959}, Serrin~\cite{MR0136885}, and  Ladyzhenskaya~\cite{MR0236541} proved that if a Leray-Hopf solution $u$  satisfies
$$
u \in L^p_t L^q \quad \text{for some $p <\infty$ and $q> d$ such that $\frac{2}{p} + \frac{d}{q} \leq 1$},
$$
then \emph{all} Leray-Hopf solutions with the same initial data must coincide. The the endpoint case $(p, q) = (\infty , d)$ was treated much later in \cite{MR1403221} after the attempts \cite{MR767409,MR762786}. This type of results is often referred to as \emph{weak-strong uniqueness} meaning that if there exists a strong solution, then any weak solution with the same initial data coincides with it. In fact, a membership in such functional classes implies the regularity of Leray-Hopf solutions as well, though we will not go into details in this direction and simply refer interested readers to \cite{MR0136885,MR1992563,MR3475661} and references therein. In what follows, we will refer to all the above $L^p_t L^q$ conditions with scaling $\frac{2}{p} + \frac{d}{q} = 1 $ as the Ladyzhenskaya-Prodi-Serrin criteria/threshold for both the uniqueness and regularity.

For general weak solutions, one loses the property of weak-strong uniqueness due to the lack of the energy inequality \eqref{def:LHweak_solutions}. For such weak solutions, one can instead study the uniqueness issue within certain functional classes. Due to technical reasons, we have to work with a smaller class of functional space $C_t  L^d$ rather than the standard $L^\infty_t L^d$ space at the endpoint $(p,q) = (\infty ,d )$. To simplify notations, given $p,q \in [1,\infty]$ let us denote the Banach space
\begin{equation*}
X^{p,q}([0,T] ; \TT^d)  = 
\begin{cases}
 L^p(  0,T; L^q( \TT^d) )  & \text{if $p \neq \infty$, }  \\
  C(  [0,T]; L^q( \TT^d) )  & \text{if $p = \infty$. }
\end{cases}
\end{equation*}

The general scheme of proving uniqueness for general weak solutions is to recast \eqref{eq:NSE_integral_formulation} into the following abstract integral formulation
\begin{equation}\label{eq:NSE_abstract}
u = e^{t\Delta}u_0 + B(u, u),
\end{equation}
and study the continuity of the bilinear operator $B$ in the various underlying functional spaces, see for instance~\cite{MR3469428} and references therein. The first result in this direction dates back to Fabes, Jones, and Rivi\`ere \cite{MR316915} who proved that weak solutions in the class $ X^{p,q} $ are unique if $ \frac{2}{p} + \frac{d}{q} \leq 1$ and  and $d < q  <\infty  $. In other words, the Ladyzhenskaya-Prodi-Serrin criteria hold for just $L^2_{t,x} $ weak solutions. The limit case $(p , q ) = ( \infty , d) $  was later covered first by \cite{MR1813331} for $d  = 3$, and then in \cite{MR1724946,MR1680809,MR1876415} via different methods and for different spatial domains. Interestingly, in dimensions $d \geq 4$, the endpoint case $(p , q ) = ( \infty , d) $ can be strengthened to $L^\infty_t L^d$~\cite{MR1876415}.

Based on a scaling analysis, when $\frac{2}{p} + \frac{d}{q} = 1 $, the spaces $X^{p,q}$ are invariant under the parabolic scaling of the equation $ u \mapsto u_\l := \l u(\l^2 t, \l x)$. In the literature, the space $X^{p,q}$ is called sub-critical when $\frac{2}{p} + \frac{d}{q} < 1 $, critical  when  $\frac{2}{p} + \frac{d}{q} = 1 $, and super-critical when  $\frac{2}{p} + \frac{d}{q} > 1 $. The uniqueness holds in all the critical and sub-critical spaces $X^{p,q}$. Moreover, thanks to~\cite{MR316915,MR760047,MR1813331,MR1876415} when $\frac{2}{p} + \frac{d}{q} \leq 1 $, all weak solutions (in the sense of Definition~\ref{def:weak_solutions}) belonging to the class $X^{p,q}$ are automatically Leray-Hopf\footnote{Our setting is on the $d$-dimensional torus and the initial data is always $L^2$.} and hence, by the  Ladyzhenskaya-Prodi-Serrin criteria, regular. In other words, within the scale of $X^{p,q}$ spaces, sub-critical or critical weak solutions are classical solutions.

We can summarize these uniqueness results  as follows. Since these results were originally stated for $\RR^d$, we also include a unified proof applicable to our specific setup in the appendix for readers' convenience.

\begin{theorem}[Ladyzhenskaya-Prodi-Serrin criteria]\label{thm:FJR_uniqueness}
Let $d \geq 2$ and $u$ be a weak solution of \eqref{eq:NSE} such that $u \in X^{p,q} $  for some $p, q \in [1,\infty]$ such that $ \frac{2}{p} + \frac{d}{q} \leq 1$.  Then
\begin{enumerate}
\item $u$ is unique in the class of $X^{p,q}$ weak solutions,
\item  $u$ is a Leray-Hopf solution, and regular on $(0,T]$. 
\end{enumerate}
\end{theorem}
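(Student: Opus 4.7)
The plan is to compare any two candidate solutions through the mild formulation \eqref{eq:NSE_integral_formulation}---equivalent to Definition~\ref{def:weak_solutions} by \cite[Theorem 2.1]{MR316915}---and push both conclusions through mapping estimates for the bilinear operator
\[
B(u,v)(t):=\int_0^t e^{(t-s)\Delta}\mathbb{P}\,\D\bigl(u(s)\otimes v(s)\bigr)\,ds,
\]
so that any weak solution in the class obeys $u=e^{t\Delta}u_0+B(u,u)$. Combining the Oseen-kernel smoothing estimate
\[
\|e^{\tau\Delta}\mathbb{P}\,\D f\|_{L^q(\TT^d)}\lesssim \tau^{-\frac{1}{2}-\frac{d}{2}\left(\frac{1}{r}-\frac{1}{q}\right)}\|f\|_{L^r(\TT^d)},\qquad 1\le r\le q,
\]
applied with $r=q/2$ so that $\|u\otimes v\|_{L^{q/2}}\le\|u\|_{L^q}\|v\|_{L^q}$ (the hypothesis $\frac{2}{p}+\frac{d}{q}\le 1$ forces $q\ge d\ge 2$), together with the Hardy--Littlewood--Sobolev inequality in time, I would prove the bilinear estimate
\[
\|B(u,v)\|_{X^{p,q}(0,T)}\le C\,T^{\gamma}\|u\|_{X^{p,q}(0,T)}\|v\|_{X^{p,q}(0,T)},\qquad \gamma:=1-\tfrac{2}{p}-\tfrac{d}{q}\ge 0.
\]
The choice $r=q/2$ is exactly what makes the scaling balance at the critical value $\gamma=0$; in the subcritical regime $\gamma>0$ supplies smallness on short intervals, and in the critical regime $\gamma=0$ smallness comes from the absolute continuity of $\|u_i\|_{L^p(0,T_1;L^q)}$ as $T_1\downarrow 0$.

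For uniqueness, given two solutions $u_1,u_2\in X^{p,q}$ sharing the initial datum $u_0$, the difference $w:=u_1-u_2$ satisfies
\[
w=B(u_1,w)+B(w,u_2),
\]
so the bilinear estimate applied on a sufficiently short interval $[0,T_1]$ yields $\|w\|_{X^{p,q}(0,T_1)}\le\tfrac{1}{2}\|w\|_{X^{p,q}(0,T_1)}$, forcing $w\equiv 0$ on $[0,T_1]$. Since the admissible $T_1$ depends only on the local $X^{p,q}$ norms of $u_1,u_2$, restarting from $u(T_1)$ and iterating finitely many times covers all of $[0,T]$.

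For the Leray--Hopf property I would bootstrap regularity from the mild formula: since $u\otimes u\in X^{p/2,q/2}$ with $\frac{2}{p/2}+\frac{d}{q/2}\le 2$, heat smoothing sends $B(u,u)$ into $L^\infty_{\mathrm{loc}}((0,T];H^s)$ for every $s\ge 0$, whence $u\in C^\infty((0,T]\times \TT^d)$ and the classical energy identity holds on every $[\varepsilon,T]$. Strong $L^2$-continuity at $t=0$ follows from $\|e^{t\Delta}u_0-u_0\|_{L^2}\to 0$ together with a bilinear bound $\|B(u,u)(t)\|_{L^2}\lesssim t^{\gamma'}\|u\|_{X^{p,q}}^2$ with $\gamma'>0$, giving $u\in C([0,T];L^2)\cap L^2(0,T;H^1)$ and then the energy inequality \eqref{eq:energy_inequality}. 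The delicate points are the two critical endpoints $(p,q)=(\infty,d)$ and $(p,q)=(2,\infty)$, where Hardy--Littlewood--Sobolev in time degenerates into a forbidden $L^1$ endpoint and no positive power of $T$ is available; there one must replace it by a short-time smallness argument built on a strongly continuous representative in the target space, as in \cite{MR1813331,MR1876415}. This is the main technical obstacle.
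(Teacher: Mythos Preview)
Your proposal is correct but follows a genuinely different route from the paper's proof in Appendix~\ref{sec:append_weak}. You work directly with the mild formulation and prove uniqueness first via a contraction built on bilinear estimates for $B$ in $X^{p,q}$, then deduce the Leray--Hopf property by bootstrapping regularity. The paper instead follows the duality method of Lions--Masmoudi~\cite{MR1876415}: it constructs a Leray--Hopf solution $v$ of the \emph{linearized} equation $\partial_t v-\Delta v+u\cdot\nabla v+\nabla p=0$ with the same data (Theorem~\ref{thm:Leray-Hopf_v}), then shows $u\equiv v$ by solving a backward dual problem for a test function $\Phi$ with $\partial_t\Phi,\Delta\Phi\in L^s_{t,x}$ for some $s>1$ (Theorem~\ref{thm:appendix_reg_Phi}); once $u$ is Leray--Hopf, uniqueness in $X^{p,q}$ is a corollary of classical weak--strong uniqueness. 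The trade-off is exactly at the endpoints you flag: your Hardy--Littlewood--Sobolev argument in time degenerates at $(p,q)=(\infty,d)$ and $(2,\infty)$ and needs ad hoc repairs, whereas the duality approach handles these uniformly at the energy level---for $q=d$ via the splitting $u=u_1+u_2$ with $\|u_1\|_{C_tL^d}$ small, and for $q=d=2$ by testing against $\Lambda\Phi$ instead of $\Delta\Phi$. Your method is more modular and closer to the original Fabes--Jones--Rivi\`ere/Kato arguments; the paper's method trades the bilinear machinery for a single linear dual estimate that sidesteps the time-integrability endpoint issues.
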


So far, the positive results suggest the Ladyzhenskaya-Prodi-Serrin threshold $\frac{2}{p} + \frac{d}{q} = 1$ as the critical regularity threshold for uniqueness/nonuniqueness of the weak solutions. One would naturally ask what would happen in the super-critical regime $ \frac{2}{p} + \frac{d}{q} > 1$, or more specifically, whether the following conjecture is valid.

\begin{conjecture}\label{conject:main}
Let $d \geq 2$ and $p, q \in [1,\infty]$ such that $ \frac{2}{p} + \frac{d}{q} > 1$. Then 

\begin{enumerate}
    \item There exist two weak solutions $u, v \in  X^{p,q} $ of \eqref{eq:NSE} such that
 \[
 u(0) = v(0) \; \text{but}\;   v \ne u.
 \]

\item There exists a weak solution $u \in  X^{p,q} $ of \eqref{eq:NSE} such that $u$ is not Leray-Hopf.

\end{enumerate}

\end{conjecture}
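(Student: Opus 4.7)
The plan is to establish the conjecture via a convex integration scheme adapted to produce weak solutions with prescribed mixed-norm regularity in $X^{p,q}$. Since the condition $\frac{2}{p}+\frac{d}{q}>1$ with $q=\infty$ reduces to $p<2$, the borderline case $q=\infty$ already saturates a scale-invariant line of sharp nonuniqueness; I would first target this case and then recover the full supercritical regime by a spatial intermittency upgrade.

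The engine is the Navier-Stokes-Reynolds system
\begin{equation*}
\partial_t u_q - \Delta u_q + \D(u_q\otimes u_q) + \nabla p_q = \D \mathring R_q, \qquad \D u_q = 0,
\end{equation*}
to be solved inductively with two geometric sequences: a frequency $\lambda_q\to\infty$ and an amplitude $\delta_q\to 0$ controlling $\|\mathring R_q\|_{L^1_{t,x}}$. At step $q+1$ I would add a perturbation $w_{q+1}$ built from building blocks whose essential feature is strong temporal concentration (a cutoff $g_{q+1}(\mu_{q+1} t)$ with $\mu_{q+1}$ large) coupled to spatial divergence-free Beltrami-type profiles $W_\xi$ at frequency $\lambda_{q+1}$. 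The low-frequency piece of $w_{q+1}\otimes w_{q+1}$ is designed to cancel $\mathring R_q$, while high-frequency, transport, and linear errors are collected into $\mathring R_{q+1}$ via an inverse-divergence operator.

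The inductive estimates to verify are (i) $\|\mathring R_{q+1}\|_{L^1_{t,x}}\le \delta_{q+2}$ and (ii) $\sum_q\|w_{q+1}\|_{L^p_t L^q_x}<\infty$, so that the limit $u=\lim u_q$ lies in $X^{p,q}$ and solves \eqref{eq:NSE}. Since $\|g_{q+1}(\mu_{q+1}\cdot)\|_{L^p_t}\sim \mu_{q+1}^{-1/p'}$ while the spatial profile contributes $O(1)$ in $L^\infty_x$, the perturbation cost is summable in $L^p_tL^\infty_x$ exactly when $p<2$, matching the Prodi-Serrin-Ladyzhenskaya threshold of Theorem \ref{thm:FJR_uniqueness}. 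For general $q<\infty$ with $\frac{2}{p}+\frac{d}{q}>1$, I would multiply the spatial profile by an intermittent envelope of concentration $\sigma_{q+1}$, contributing $\sigma_{q+1}^{d(1/2-1/q)}$ in $L^q$ and $\sigma_{q+1}^{-d/2}$ in $L^2$; balancing $\sigma_{q+1}$ against $\mu_{q+1}$ so that both (i) and (ii) close yields exactly $\frac{2}{p}+\frac{d}{q}>1$. For part (1) I would run the scheme from the same initial data $u_0$ but with a prescribed energy mismatch at a later time to produce two distinct weak solutions; for part (2) I would localize the perturbations in time so that $\|u(t)\|_{L^2}^2$ strictly increases across a short window, incompatible with \eqref{eq:energy_inequality}.

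The principal obstacle is saturating the temporal intermittency while maintaining $\D w_{q+1}=0$ and controlling the remainder generated by the heat operator acting on sharp temporal profiles; both forces an auxiliary temporal corrector whose $L^1_{t,x}$-norm must remain below $\delta_{q+2}$ as $\mu_{q+1}$ is pushed to its critical scaling. Calibrating this parameter balance so that $\|w_{q+1}\|_{L^p_t L^\infty_x}$ (and, in the auxiliary regularity statement, $\|w_{q+1}\|_{L^1_t W^{1,q}_x}$) stay summable while $\|\mathring R_{q+1}\|_{L^1_{t,x}}$ decays geometrically is the technical heart of the proof and is where the sharpness of the threshold $\frac{2}{p}+\frac{d}{q}=1$ is won or lost.
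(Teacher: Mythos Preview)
The statement you are attempting to prove is labeled a \emph{Conjecture} in the paper, and the paper does not prove it in full: only the endpoint case $q=\infty$ (equivalently $p<2$) is settled, as Theorem~\ref{thm:main_sharp_short}. The authors state explicitly that ``the method developed in this paper heavily relies on the constraint $p<2$ ($q=\infty$) and is not able to achieve the nonuniqueness of weak solutions in $X^{p,q}$ for $p\geq 2$ and $q \geq 2$.'' Your proposal to ``recover the full supercritical regime by a spatial intermittency upgrade'' is therefore claiming something that remains open.

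For the $q=\infty$ case your outline is in the right spirit --- temporal concentration of the perturbation is indeed the key mechanism --- but it omits structural ingredients that the paper relies on. The paper uses stationary Mikado flows rather than Beltrami waves (the latter lack the needed pointwise disjointness properties in $d=2$), and crucially it runs a \emph{two-step} iteration: a gluing step that concentrates the Reynolds stress onto a sparse time set before the convex integration step. This concentration is what produces the small-Hausdorff-dimension singular set and what allows the temporal corrector $w^{(t)}$ to remain small independently of the concentration parameters.

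The genuine gap is in your treatment of general $(p,q)$. Your scaling heuristic --- introducing a spatial concentration envelope $\sigma_{q+1}$ and balancing against the temporal parameter $\mu_{q+1}$ --- does not close. The obstruction is not in the $L^p_tL^q_x$ bookkeeping of $w_{q+1}$ alone, but in the error terms: once spatial concentration is strong enough to gain $L^q_x$ integrability for $q$ finite, the oscillation error $R_{\Osc}$ and the temporal-derivative error $\mathcal{R}(\partial_t w^{(p)})$ in $L^1_tL^r_x$ (for the auxiliary $r>1$ needed to invert the divergence) no longer decay. The paper's parameter analysis in Section~\ref{subsection:choice_of_parameters} shows that the scheme just barely closes at $q=\infty$; there is no slack to trade for finite $q$ when $p\geq 2$. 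Your sentence ``balancing \dots\ yields exactly $\frac{2}{p}+\frac{d}{q}>1$'' asserts the conclusion without confronting this obstruction.
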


In stark contrast to the positive result of Theorem \ref{thm:FJR_uniqueness}, which has been known for quite some time, Conjecture \ref{conject:main} was completely open until a very recent groundbreaking work \cite{MR3898708} of Buckmaster and Vicol. Following the breakthrough~\cite{MR3898708},  the nonuniqueness has been shown in dimension $d\geq 3$ under various settings:  $C_t L^{2+}$ \cite{MR3898708,1809.00600} in dimension $d= 3$ and $H^{1/200-}$\cite{MR3951691} in dimension $d \geq 4$. These works used a unified approach to tackle both parts of Conjecture \ref{conject:main} at the same time: one perturbs a given smooth solution of \eqref{eq:NSE} to obtain a ``wild'' solution with certain regularity $X^{p,q}$, and then the existence of such a wild solution implies both the nonuniqueness of weak solutions and the existence of non-Leray-Hopf solutions in the said class. Even though these works reveal that the nonuniqueness of weak solutions can emerge in the ``low'' regularity regime, we have to stress that the regularity of current nonunique weak solutions is far very from the critical threshold $ \frac{2}{p} + \frac{d}{q} =1$.

Besides the gap of the regularity between the critical threshold and the current nonuniqueness constructions, another important question left open since~\cite{MR3898708} is whether weak solutions in 2D are unique. Unfortunately, the strategy of \cite{MR3898708},  which was in turn developed from a series of works \cite{{MR3374958,MR3530360,MR3866888,1701.08678}}, breaks down in 2D. The reason is that roughly speaking, the framework of \cite{MR3898708} is $ L^2$ critical, in the sense that the mechanism can produce finite energy wild solutions if the system is $L^2$ super-critical. This heuristic has been confirmed in \cite{1809.00600,1808.07595,MR4097236} for the generalized Navier-Stokes equations with fractional dissipation.

Since the 2D Navier-Stokes equations is $L^2$-critical, there are no nonuniqueness results for the 2D case to date, and all known solutions are Leray-Hopf ones. In fact, a direct corollary of \cite{MR3898708} is false in 2D: any $C_t L^2$ weak solution of the 2D Navier-Stokes equations is Leray-Hopf, and hence smooth and unique. One of the main results in this paper is to show that the nonuniqueness of weak solutions holds even in 2D.

\begin{theorem}[Strong nonuniqueness in 2D]\label{thm:main_short}
Let $d=2$ be the dimension. For any divergence-free $u_0\in L^2(\TT^2)$, there exists a weak solution $u$ (in the sense of Definition \ref{def:weak_solutions}), different from the Leray-Hopf solution, and such that $\lim_{t\to 0+}\| u(t) -u_0\|_{L^2(\TT^2)} = 0$.
\end{theorem}

It is worth noting that the nonuniqueness is proved here in a stronger sense than~\cite{MR3898708,MR3951691}, namely that \emph{every} solution is nonunique in the class of weak solutions. We can classify different types of nonuniqueness results as follows. Here, $X$ denotes different functional classes of weak solutions.
\begin{itemize}
    \item ``Weak nonuniqueness'': there exists a nonunique weak solution in the class $X$.
    
    \item ``Strong nonuniqueness'': any weak solution in the class $X$ is nonunique.
\end{itemize}
Under this classification, currently the only strong nonuniqueness available is  \cite{1809.00600} for $3D$, where $X$ can be taken as $  C_t  H^{\ep}$  weak solutions with intervals of regularity for a small $\ep>0$. In fact, the main results of \cite{1809.00600} can be adapted to show the strong nonuniqueness of weak solutions on $\TT^d$ for dimension $d=4$ as well, since Leray-Hopf solutions in $d \leq 4$ have intervals of regularity.

The next result is our headline theorem, where we prove the strong nonuniqueness of weak solutions in a class of $ L^p_t L^\infty$, for any $p<2$ and in any dimension $d \geq 2$, which is sharp in view of the Ladyzhenskaya-Prodi-Serrin criteria. This theorem is a direct consequence of our main theorem, Theorem \ref{thm:main_thm_2} below, where a detailed list of the properties of constructed solutions can be found. In particular, we settle Conjecture \ref{conject:main} regarding the sharpness of Ladyzhenskaya-Prodi-Serrin criteria in the case $(p,q) =(2 ,\infty )$.

\begin{theorem}[Sharp nonuniqueness in $d\geq 2$]\label{thm:main_sharp_short}
Let $d \geq 2$ be the dimension and $1 \leq p< 2 $. 
\begin{enumerate}
    \item A weak solution  $u \in L^p(0,T; L^\infty(\TT^d))$  of \eqref{eq:NSE} is not unique in the class $   L^p(0,T; L^\infty(\TT^d))$ if   $u$ has at least one interval of regularity.

    \item There exist non-Leray-Hopf weak solutions $u \in L^p(0,T; L^\infty(\TT^d))$.
\end{enumerate}

\end{theorem}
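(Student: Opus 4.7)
The plan is to prove both parts simultaneously via a convex integration scheme driven by \emph{temporal} intermittency, in which the building blocks are spatially bounded Beltrami-type flows modulated by sharply concentrated temporal profiles. Since $L^p_t L^\infty$ with $p<2$ is supercritical in time but borderline in space, standard spatial intermittency cannot help: it destroys the $L^\infty_x$ bound. The smallness of the perturbation in $L^p_t$ must instead come from concentrating in time. Specifically, for a mean-free profile $g_\mu(t) = \mu^{1/2} g(\mu t)$ with $g$ smooth and supported in $[0,1]$, one has $\|g_\mu\|_{L^p_t} \sim \mu^{1/2 - 1/p}$, which is small for $p<2$, while $\|g_\mu\|_{L^2_t} \sim 1$ is preserved to allow the quadratic cancellation of the Reynolds stress.

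The core is an iteration lemma for the Navier--Stokes--Reynolds system: starting from smooth $(\bar u_q, \bar R_q)$, construct a perturbation of the form
\begin{equation*}
w_{q+1}(t,x) = \sum_k a_k(t,x)\, g_k(t)\, W_k(\lambda_{q+1} x),
\end{equation*}
where $\{W_k\}$ is a finite family of stationary Beltrami-type vector fields satisfying the algebraic identity needed to reconstruct $-\bar R_q$ from $\sum_k a_k^2 \langle W_k \otimes W_k\rangle$, the amplitudes $a_k$ are slowly varying functions of $\bar R_q$, and the temporal pulses $g_k$ are supported in disjoint subintervals of length $\mu_{q+1}^{-1}$ with $\int g_k^2\,dt = 1$. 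A divergence corrector and a temporal corrector (in the spirit of the transport-equation constructions by Cheskidov--Luo) absorb the singular-in-time stress produced by $\partial_t w_{q+1}$. Choosing $\mu_{q+1} = \lambda_{q+1}^\theta$ with small $\theta$ and $\lambda_q$ growing super-exponentially, one verifies that the new Reynolds stress $\bar R_{q+1}$, built schematically from the inverse divergence of $\partial_t w_{q+1}$, the dissipation $\Delta w_{q+1}$, the linear transport contributions and the high-frequency quadratic remainder, decays in $L^1_{t,x}$, while $\|w_{q+1}\|_{L^p_t L^\infty_x} \lesssim \mu_{q+1}^{1/2-1/p}$ is summable precisely because $p<2$. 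Iterating and passing to the limit yields a weak solution $u \in L^p_t L^\infty \cap C_tL^2_w$.

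For part (1), strong nonuniqueness, the perturbation is localized to the interval of regularity $I \subset [0,T]$ of the given solution $v$: one initializes the scheme with $\bar u_0 = v$ and $\bar R_0 = 0$ on an open subinterval of $I$ where $v$ is smooth, introduces a nonzero perturbation with prescribed $L^2_{t,x}$ energy supported strictly inside that subinterval, and glues back to $v$ outside. The resulting limit $u$ coincides with $v$ on $[0,T] \setminus I$ (in particular at $t=0$) but differs from $v$ on $I$. For part (2), one exploits the prescribed-energy flexibility of the scheme to make the kinetic energy of $u$ strictly increasing on some subinterval; this manifestly violates \eqref{eq:energy_inequality}, so $u$ cannot be a Leray--Hopf solution.

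The main obstacle I anticipate is the tension between the $L^\infty_x$ requirement and the closure of the error estimates. Since spatial intermittency is unavailable, every spatial derivative costs a full factor of $\lambda_{q+1}$, which makes both the viscous term $\Delta w_{q+1}$ and the inverse-divergence stress $\mathcal{R}\partial_t w_{q+1}$ far more delicate to control than in the 3D $C_tL^2$ schemes of the Buckmaster--Vicol family. Balancing $\lambda_{q+1}$, $\mu_{q+1}$ and the amplitude sizes so that the dissipation is dominated, the Reynolds stress contracts, and the perturbation series converges in $L^p_t L^\infty$ is the heart of the proof, and is precisely where the restriction $p<2$ enters quantitatively through the exponent $\tfrac{1}{2} - \tfrac{1}{p}<0$.
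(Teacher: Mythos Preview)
Your high-level mechanism---temporal intermittency via $g_\mu$ so that $\|g_\mu\|_{L^p_t}\sim\mu^{1/2-1/p}\to 0$ for $p<2$---is precisely what the paper exploits. But your decision to use non-intermittent Beltrami-type building blocks, on the grounds that ``spatial intermittency destroys the $L^\infty_x$ bound,'' is where the argument breaks. With spatial oscillation frequency $\sigma$, temporal concentration $\kappa$, and no spatial concentration, the two critical errors scale as
\[
\|\mathcal{R}\Delta w\|_{L^1_tL^r}\sim \sigma\,\kappa^{-1/2},\qquad
\|\mathcal{R}\partial_t(w^{(p)}+w^{(c)})\|_{L^1_tL^r}\sim \sigma^{-1}\nu\,\kappa^{1/2},
\]
where $\nu\geq 1$ is the temporal oscillation (needed so that the temporal corrector $w^{(t)}$ is small). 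Their product is $\sim\nu\geq 1$, so they cannot both be made small. The temporal corrector is of no help here: it cancels the high-temporal-frequency part of $w^{(p)}\otimes w^{(p)}$, not $\partial_t w^{(p)}$.

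The paper resolves this by using Mikado flows with a modest spatial concentration parameter $\mu$. The key point you missed is that the skew potential $\Omega_k$ with $\D\Omega_k=W_k$ then gains an extra factor $\mu^{-1}$ in $L^r$, and moreover both $\|W_k\|_{L^r}$ and $\|\Omega_k\|_{L^r}$ pick up $\mu^{(d-1)(1/2-1/r)}\ll 1$ for $r$ near $1$. With these factors the product of the two errors becomes $\sim\nu\mu^{-(d-1)}$ (for $r$ close to $1$), which \emph{can} be driven to zero. The resulting blow-up $\|W_k\|_{L^\infty}\sim\mu^{(d-1)/2}$ is not fatal: it is absorbed by taking $\kappa$ larger, since $\|w^{(p)}\|_{L^p_tL^\infty}\sim\kappa^{1/2-1/p}\mu^{(d-1)/2}$ and $1/2-1/p<0$. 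So spatial intermittency is not optional---it is what creates the room to beat the Laplacian. A secondary difference: the paper does not perturb $v$ directly with a prescribed-energy scheme; it applies the construction to a smooth $\widetilde v$ agreeing with $v$ near one endpoint of the regularity interval but far from $v$ in $L^p_tL^\infty$, then glues. This avoids having to force a nonzero perturbation from $\overline R_0=0$.
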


\begin{remark}
A few remarks are in order:
\begin{enumerate}

\item In particular, one can apply the theorem to a smooth solution to obtain a nonunique weak solution in the class $ L^p_t L^\infty$. In fact, we can alternatively present the result as follows: for any smooth initial data, there are infinitely many weak solutions with regularity $L^p_t L^\infty$ that coincide with the unique local smooth solution for a short time, see the proof of Theorem \ref{thm:main_euler_onsager} at the end of Section \ref{sec:outline}.
    
    \item Even though general $L^2_{t,x}$ weak solutions only attain their initial data in the distributional sense, our nonunique solutions can attain  their initial data strongly in $L^2$ or even in the classical sense. In other words, the mechanism of nonuniqueness does not stem from the roughness of the initial data, but rather the persistence of space-time oscillations.

    \item The nonunique weak solutions are obtained by modifying a given solution on an interval of regularity. As a result, these solutions can be arranged to only differ from a given Leray-Hopf weak solution on a fixed time interval, and hence they attain the initial data strongly in $L^2$ as the Leray-Hopf ones. This together with the Leray structure theorem implies that in dimensions $2\leq d \leq 4$, the Cauchy problem of \eqref{eq:NSE} has infinite many weak solutions for any divergence-free initial data $u_0 \in L^2$.

\item One clearly sees that Theorem \ref{thm:main_sharp_short} only implies the sharpness of the Ladyzhenskaya-Prodi-Serrin criteria $\frac{2}{p} + \frac{d}{q} \leq 1 $ at the endpoint $(p,q) = (2, \infty)$ and the rest of the borderline regime remains open. In fact, these nonunique weak solutions also live on the borderline of the Beale-Kato-Majda criterion, as we shall see in Theorem \ref{thm:main_thm_2} below. 

\item In view of Theorem \ref{thm:FJR_uniqueness}, nonunique solutions cannot live in the class $L^2_t L^\infty$ where weak solutions are Leray-Hopf, however, Theorem \ref{thm:main_sharp_short} shows the existence of nonunique solutions on the borderline of this Leray-Hopf class and raises the question of whether such constructions can be extended to Leray-Hopf solutions of lower regularity.
\end{enumerate}
\end{remark}

\subsection{The main theorem and intervals of regularity}
We now present the main theorem of the paper, which implies immediately Theorems \ref{thm:main_short} and \ref{thm:main_sharp_short} above as we show in Section \ref{sec:outline}. One of the most interesting features of the constructed weak solutions is that they possess intervals of regularity, i.e. they are classical solutions on many sub-intervals whose union occupies a majority of the time axis, cf. the classical Leray-Hopf solutions \cite{MR1555394} and recent works~ \cite{1809.00600,2102.03244,2102.06085} on wild solutions with such a property, which we will discuss in detail towards the end of the introduction.  

\begin{theorem}[Main theorem]\label{thm:main_thm_2}
Let $d \geq 2$ be the dimension and $1\leq p <2$, $q < \infty$, and $\ep>0$. For any smooth, divergence-free vector field $v \in C^\infty ([0,T] \times \TT^d)$ with zero spatial mean for each $t \in[0,T]$, there exists a weak solution $u $ of \eqref{eq:NSE} and a set 
$$
\mathcal{I} = \bigcup_{i=1}^\infty (a_i ,b_i) \subset [0,T],
$$
such that the following holds.
\begin{enumerate}
\item The solution $u$ satisfies
$$ 
u \in L^p(0,T; L^\infty(\TT^d) )  \cap L^1(0,T; W^{1,q}(\TT^d) ).
$$
\item $u $ is a smooth solution on $(a_i , b_i )$ for every $i$, namely
$$
u|_{\mathcal{I}\times \TT^d }\in C^\infty(\mathcal{I} \times \TT^d).
$$
In addition, $u$ agrees with the unique smooth solution with the initial data $v(0)$ near $t=0$ and is also regular neat $t=T$.

\item The Hausdorff dimension of the residue set $\mathcal{S}= [0,T] \setminus \mathcal{I}$ satisfies
$$
d_{\mathcal{H}} (\mathcal{S} ) \leq \ep.
$$

\item The solution $u$ and the given vector field $v$ are $\ep$-close in $L_t^p L^\infty \cap L_t^1 W^{1,q} $:
$$
\| u  -v \|_{ L^p(0,T; L^\infty(\TT^d)) \cap L^1(0,T; W^{1,q}(\TT^d) )  }   \leq \ep. 
$$
\end{enumerate}

\end{theorem}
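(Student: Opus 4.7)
\textbf{Proof proposal for Theorem \ref{thm:main_thm_2}.}

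The plan is to prove Theorem \ref{thm:main_thm_2} by a convex integration scheme built around intermittent (temporally concentrated) building blocks, starting the iteration from the given smooth vector field $v$ rather than from the trivial solution. Write the Navier-Stokes-Reynolds system
\begin{equation*}
\partial_t u_q - \Delta u_q + \D(u_q\otimes u_q) + \nabla p_q = \D \mathring{R}_q, \qquad \D u_q = 0,
\end{equation*}
and initialize at stage $q=0$ with $u_0 := v$ and $\mathring{R}_0$ the (smooth) Reynolds defect of $v$. Inductively, we will produce $(u_{q+1},\mathring{R}_{q+1})$ from $(u_q,\mathring{R}_q)$ via $u_{q+1} = u_q + w_{q+1}$, where $w_{q+1}$ is built out of (spatially and temporally) intermittent Mikado- or jet-type flows oscillating at frequency $\lambda_{q+1}$, with amplitudes chosen so that $w_{q+1}\otimes w_{q+1}$ reproduces $-\mathring{R}_q$ to leading order. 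Parameters $\lambda_q = a^{(b^q)}$ and $\delta_q = \lambda_q^{-2\beta}$ will be tuned so that the Reynolds stress $\mathring{R}_{q+1}$ (containing the oscillation error, transport/Nash errors, linear error from $(\partial_t-\Delta)w_{q+1}$, and a corrector error) decays, and $\sum_q \|w_{q+1}\|_{L^p_t L^\infty\cap L^1_t W^{1,q}}$ is summably small. Passing to the limit yields the weak solution $u$ claimed in (1), with (4) enforced by taking $a$ (and thus $\delta_1$) sufficiently large.

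The first main idea is \emph{temporal intermittency}: since $p<2$ and we control only $L^1$ in time of one spatial derivative, we use building blocks that live on short time windows of length $\mu_{q+1}^{-1}\ll 1$ rescaled by $\tau_{q+1}^{1/p}$ in amplitude. Concretely, choose spatial intermittent jets $\mathbf{W}_{(k)}$ with parameters $(r_\parallel,r_\perp,\lambda_{q+1})$ and multiply by a temporal profile $g_{\mu}(t)$ of period $\mu_{q+1}^{-1}$ and $L^p_t$ norm normalized to $1$. Then $\|w_{q+1}\|_{L^p_t L^\infty}\lesssim \delta_{q+1}^{1/2}$ while $\|w_{q+1}\otimes w_{q+1}\|_{L^1_t L^1_x}$ is of order $\delta_{q+1}$, matching the size of $\mathring{R}_q$. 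Crucially, $p<2$ leaves a margin $1/p-1/2>0$ that absorbs the losses coming from the Leray projection, the heat operator on $w_{q+1}$, and the spatial derivative needed for $L^1_t W^{1,q}$. The $L^1_t W^{1,q}$ control follows once $r_\perp^{-1} r_\parallel^{-1/2}\cdot\lambda_{q+1}\cdot\tau_{q+1}^{-1/2}$ is balanced to be $\ll \delta_{q+1}^{1/2}$; since only $L^1$ in time is demanded and $q<\infty$, a standard choice of $r_\parallel,r_\perp$ as small negative powers of $\lambda_{q+1}$ suffices.

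The second idea is a \emph{gluing/temporal cutoff step} producing intervals of regularity. At each stage $q$, partition $[0,T]$ into intervals of length $\sim \tau_q$ and introduce a smooth cutoff $\chi_q(t)$ equal to $1$ on the sub-intervals where $\|\mathring{R}_q\|$ is already super-exponentially small (so no perturbation is needed) and vanishing elsewhere. Multiply the perturbation $w_{q+1}$ by $(1-\chi_q(t))$, so that $u_{q+1}=u_q$ on the good intervals of stage $q$. By induction, $u$ coincides with a smooth Navier-Stokes solution on the nested intersection $\mathcal{I}=\bigcup_i(a_i,b_i)$. Arrange the first cutoff so that $\chi_0\equiv 1$ on a neighborhood of $t=0$; standard short-time well-posedness for $v(0)\in C^\infty$ then gives assertion (2). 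The singular set $\mathcal{S}=[0,T]\setminus\mathcal{I}$ is a Cantor-type set of measure $\lesssim \sum_q N_q\tau_q$; choosing $\tau_q = \lambda_q^{-\gamma}$ with the number of bad intervals $N_q\lesssim \lambda_q^{\gamma(1-\varepsilon/2)}$ gives the Hausdorff dimension bound $d_{\mathcal H}(\mathcal S)\le \varepsilon$ by a direct covering argument (summing $\sum_q N_q\tau_q^\varepsilon$).

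The step I expect to be the hardest is reconciling three competing demands simultaneously: (a) the oscillation error must be controlled even though the amplitude $a_{(k)}$ is non-smooth across the boundaries of the temporal cutoffs $\chi_q$, requiring a careful anti-divergence applied only inside the support of $(1-\chi_q)$; (b) the $L^p_t L^\infty$ estimate with $p$ arbitrarily close to $2$ is a \emph{sharp} scaling condition, so every loss coming from the inverse-divergence operator, Calder\'on-Zygmund operators, and the temporal derivative on $g_{\mu}$ (which costs $\mu_{q+1}$) must be absorbed with room to spare; and (c) the Hausdorff dimension bound forces $\tau_q$ to decrease faster than any power of $\lambda_q$, which in turn makes the temporal derivative error $\mu_{q+1}\tau_{q+1}^{1/p}$ dangerous. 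The parameter hierarchy must therefore satisfy $\beta\ll \varepsilon \ll 1$, $b\gg 1$, with $\tau_q,\mu_q,r_\parallel,r_\perp$ chosen as fractional powers of $\lambda_q$ arranged so that all error estimates close. Once these exponent inequalities are verified, parts (1)--(4) follow from the convergence of $u_q$ and from the properties of the cutoffs $\chi_q$.
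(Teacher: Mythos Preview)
Your overall architecture---temporal intermittency in the convex integration step plus a separate mechanism to produce intervals of regularity---matches the paper. But your description of the gluing step contains a genuine gap. You propose to set the cutoff $\chi_q\equiv 1$ ``on the sub-intervals where $\|\mathring R_q\|$ is already super-exponentially small''. At stage $q=0$, $\mathring R_0$ is the Reynolds defect of an arbitrary smooth $v$ and is in general bounded away from zero everywhere on $[0,T]$; there are no such good intervals. Nor does a standard convex integration pass create them: it makes $\mathring R_{q+1}$ globally small, not identically zero on sub-intervals. Without a mechanism that \emph{manufactures} intervals on which the stress vanishes, your cutoffs have nothing to localize to, and the nested intersection you call $\mathcal I$ is empty.

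The paper closes this gap with an active concentration procedure (its Proposition~3.1) that is logically prior to the convex integration step. One partitions $[0,1]$ into $\sim\tau^{-\varepsilon}$ sub-intervals $[t_i,t_{i+1}]$ of length $\tau^\varepsilon$, and on each one solves the \emph{linearized} Navier--Stokes system
\[
\partial_t v_i - \Delta v_i + \D(v_i\otimes v_i) + \D(v_i\otimes u) + \D(u\otimes v_i) + \nabla q_i = -\D R,\qquad v_i(t_i)=0,
\]
so that $u+v_i$ is an exact solution on $[t_i,t_{i+1}]$. One then glues $\overline u = u + \sum_i \chi_i v_i$ with sharp cutoffs $\chi_i$ that are $\equiv 1$ except on windows of width $\sim\tau$ near the endpoints. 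This forces the new stress $\overline R$ to be supported on $\sim\tau^{-\varepsilon}$ intervals of length $\sim\tau$ (giving the Hausdorff dimension $\le\varepsilon$ directly), while a short-time energy estimate shows $\|\mathcal R v_i\|_{L^\infty_t L^r}\lesssim \int_{t_i}^{t_{i+1}}\|R\|_{L^r}$ and hence $\|\overline R\|_{L^1_t L^r}\le C\|R\|_{L^1_t L^r}$ uniformly in $\tau$. Crucially, this step is \emph{decoupled} from the convex integration parameters: $\tau$ depends only on $(u,R)$, and the subsequent perturbation $w$ is simply required to be supported in the bad set $I$. This decoupling is exactly what dissolves your worry (c) about $\tau_q$ interacting with $\mu_{q+1}$.

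Two smaller points. First, intermittent jets with parameters $(r_\parallel,r_\perp)$ are three-dimensional objects; since the theorem covers $d=2$, the paper uses \emph{stationary} Mikado flows with a single concentration parameter $\mu$, plus separate temporal oscillation/concentration functions $g_\kappa,h_\kappa$. Second, the paper does not run a super-exponential $(\lambda_q,\delta_q)$ scheme: the main proposition takes an arbitrary target $\delta>0$ and returns a new pair with $\|R_1\|_{L^1_tL^r}\le\delta$, so no inductive parameter hierarchy of the $a^{b^q}$ type is needed.
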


\begin{remark}\label{remark:main_thm}
We list a few remarks here concerning the main theorem.
\begin{enumerate}

 \item In terms of the scaling, $u$ also lies on the borderline of the Beale-Kato-Majda criterion \cite{MR763762} which scales as $L^1_t W^{1, \infty}$. Even though it is not known whether the Beale-Kato-Majda criterion implies the uniqueness in the setting of $L^2_{t,x}$ weak solutions, this in a sense suggests that one can not beat the scaling.

 \item The residue set $S=[0,T] \setminus \mathcal{I}$ is a singular set in the sense that for any $t \not \in S$, there is $\delta>0$ such that $u \in C^\infty((t-\delta, t+\delta)\times \TT^d )$. In addition, we do not prove $d_{\mathcal{H}} (\mathcal{S} ) >0 $, but it follows that the set $\mathcal{S}$ is at least nonempty from our current construction. It seems possible that a more refined bookkeeping would allow to show a similar lower bound on the Hausdorff dimension of $\mathcal{S}$ as well.

 \item Since the solution $u(t)$ is smooth on $ (a_i,b_i)$,  the energy equality is satisfied
$$
\frac{1}{2} \|u(t_1) \|_2^2 + \int_{t_0}^{t_1} \|\nabla u (s)\|_2^2 \,ds  = \frac{1}{2} \|u(t_0) \|_2^2 \quad \text{for all $t_0, t_1 \in (a_i ,b_i)$}.
$$
However, the energy equality on $[0,T]$ is not valid, which can be seen by taking a vector field $v$ with an increasing energy profile.

  \item The driving mechanism of nonuniqueness is a result of large chunks of mass emerging from/escaping to finer time scales. There is no blowup on each interval of regularity $(a_i,b_i)$ but norms do blow up as $i \to \infty$, at least for higher order norms.

\end{enumerate}

\end{remark}

\subsection{Applications to the Euler equations}

Our results also apply to the inviscid case with no changes in contrast to a recent work \cite{1809.00600}, which relies heavily on the parabolic regularization.

\begin{theorem}\label{thm:main_euler}
Theorem \ref{thm:main_thm_2} also holds for the Euler equations. Namely, under the same assumptions, there exists a weak solution $u$ of the Euler equations satisfying the same properties.
\end{theorem}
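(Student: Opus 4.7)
The plan is to observe that the convex integration scheme used to prove Theorem \ref{thm:main_thm_2} is a perturbation argument in which the viscous term $-\Delta u$ enters only as one error contribution to the Reynolds stress. For the Euler equations one iterates on the Euler–Reynolds system $\partial_t u_q + \D(u_q\otimes u_q)+\nabla p_q = \D R_q$ rather than its viscous analogue, with the same frequency schedule $\lambda_q$, the same amplitude parameters $\delta_q$, and the same intermittent building blocks and temporal concentration that were tuned to deliver $L^p_t L^\infty \cap L^1_t W^{1,q}$ regularity for $p<2$, $q<\infty$. The target classes are subcritical for both equations, so the scaling of the building blocks is identical.

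I would carry out the induction in the same order as in the viscous case. First, perform a gluing step on each subinterval of the current singular set: in the Navier–Stokes proof this uses local-in-time smooth solutions of the Cauchy problem to regularize $u_q$ on short intervals; for Euler one substitutes the short-time smooth Euler flow, whose lifespan depends only on $\|u_q\|_{C^1}$ and is controlled by the inductive bounds on $u_q$. Second, add a temporally localized perturbation $w_{q+1}$, supported away from the already-constructed intervals of regularity, so that smoothness accumulated at previous stages is preserved. Third, define the new Reynolds stress $R_{q+1}$ by inverting the divergence on the usual transport, oscillation, and Nash errors. The decisive point is that the \emph{dissipation error} $\mathcal{R}(\Delta w_{q+1})$ which has to be estimated in the Navier–Stokes iteration is simply absent here, so the total Reynolds stress $R_{q+1}$ satisfies the same smallness bounds as before, and in fact strictly fewer terms need to be handled.

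With the Reynolds stress closed at each step, the remaining properties are extracted exactly as in the viscous case: the Hausdorff dimension bound $d_{\mathcal{H}}(\mathcal{S})\le\varepsilon$ follows from the same counting of the temporal supports of the perturbations; prescription of the initial data near $t=0$ follows from making the perturbations vanish near $t=0$ in the glued solution; and $\varepsilon$-closeness to the given smooth vector field $v$ is obtained by summing the geometric series of perturbation norms in $L^p_tL^\infty\cap L^1_tW^{1,q}$.

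The main obstacle is really one of verification rather than of new ideas: one must check that every step which invoked parabolic smoothing in the Navier–Stokes construction can be replaced by an inviscid analogue. This is easiest precisely at the gluing step, where short-time well-posedness of Euler in $C^1$-type spaces suffices, and in the bookkeeping of the Reynolds stress, where a term vanishes. In contrast to the strategy of \cite{1809.00600}, which used the heat semigroup in an essential way, the construction here is kinematic and the removal of $\Delta u$ requires no change in the intermittent building blocks or the scaling relations that produce sharp $L^p_t L^\infty$ regularity.
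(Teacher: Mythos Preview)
Your proposal is correct and matches the paper's approach: the paper does not give a separate proof of Theorem~\ref{thm:main_euler} but simply treats the Euler case in parallel with the Navier--Stokes case, noting at each stage (the gluing step in Section~\ref{sec:proof_step_1}, the Reynolds stress estimates in Section~\ref{sec:proof_step_2}) that one drops the Laplacian, replaces local well-posedness of Navier--Stokes by that of Euler, and omits the dissipation error $\mathcal{R}(\Delta w)$ from $R_{\Lin}$. Your emphasis that the construction is kinematic and does not rely on parabolic smoothing (in contrast to \cite{1809.00600}) is exactly the point the paper makes.
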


As a byproduct of the construction, we provide improvements and extensions to the Onsager conjecture in the negative direction, where the best result in 2D currently stands at $L^\infty_t C^{1/5 -\ep}$~ \cite{MR3374958}, see also \cite[pp. 1817]{MR3987721} and \cite{novack2018nonuniqueness}.

\begin{theorem}\label{thm:main_euler_onsager}
Let $d \geq 2$ be the dimension and $\ep>0$. For the Euler equations or the Navier-Stokes equations on $\TT^d$, there exist infinitely many non-conserving weak solutions $u \in L^{\frac{3}{2} -\ep}_t C^{\frac{1}{3}} \cap L^1_t C^{1 -\ep}$ with the same initial data. 
\end{theorem}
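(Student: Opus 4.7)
The plan is to deduce Theorem \ref{thm:main_euler_onsager} directly from the Euler analogue of Theorem \ref{thm:main_thm_2} (namely Theorem \ref{thm:main_euler}) by combining a Sobolev embedding with a H\"older interpolation in both space and time. Fix $\ep>0$ and choose auxiliary parameters $p \in [1,2)$, $q \in (d, \infty)$, and $\ep' > 0$ to be tuned at the end. Theorem \ref{thm:main_euler} provides, for each smooth divergence-free $v$ with $v(0)=u_0$, a weak Euler solution
$$u \in L^p(0,T;L^\infty(\TT^d)) \cap L^1(0,T;W^{1,q}(\TT^d))$$
with the intervals-of-regularity structure and the listed approximation/coincidence properties.

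\textbf{Spatial upgrade and H\"older interpolation.} Choosing $q$ so that $d/q<\ep'$, Morrey's embedding $W^{1,q}(\TT^d) \hookrightarrow C^{1-\ep'}(\TT^d)$ promotes $u \in L^1_t C^{1-\ep'}$. The standard pointwise H\"older interpolation in space then gives
$$\|u(t)\|_{C^{1/3}} \lesssim \|u(t)\|_{L^\infty}^{1-\theta}\,\|u(t)\|_{C^{1-\ep'}}^{\theta}, \qquad \theta=\frac{1/3}{1-\ep'}.$$

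\textbf{H\"older in time.} Applying H\"older's inequality in $t$ with the conjugate pair $\tfrac{p}{(1-\theta)r}$, $\tfrac{1}{\theta r}$, which sums to $1$ exactly when
$$\frac{1}{r}=\frac{1-\theta}{p}+\theta,$$
yields
$$\|u\|_{L^r_t C^{1/3}} \lesssim \|u\|_{L^p_t L^\infty}^{1-\theta}\,\|u\|_{L^1_t C^{1-\ep'}}^{\theta}.$$
In the limiting regime $p \uparrow 2$ and $\ep' \downarrow 0$ one has $\theta \to 1/3$ and
$$\frac{1}{r} \to \frac{2/3}{2}+\frac{1}{3} = \frac{2}{3},$$
so $r \to \tfrac{3}{2}$. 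Choosing $(p,q,\ep')$ appropriately therefore secures $r=\tfrac{3}{2}-\ep$ and simultaneously $u \in L^1_t C^{1-\ep}$, which is the desired regularity class.

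\textbf{Infinitely many non-conserving solutions.} Apply Theorem \ref{thm:main_euler} to a sequence $\{v_n\}$ of smooth divergence-free fields with common initial value $u_0$ but pairwise well-separated in $L^p_t L^\infty$ (e.g.\ $v_n(t,x)=u_0(x)+t\psi_n(x)$ with orthonormal divergence-free $\psi_n$'s), obtaining weak Euler solutions $u_n$ sharing the initial datum $u_0$; taking the approximation tolerance small relative to the separations keeps them pairwise distinct. Non-conservation follows from two observations: on each interval of regularity $(a_i,b_i)$ the smooth Euler evolution preserves the $L^2$ norm, so $t \mapsto \|u_n(t)\|_2^2$ is a step function; and the convex-integration scheme underlying Theorem \ref{thm:main_thm_2} is flexible enough to prescribe a non-constant target energy profile across the singular set $\mathcal{S}$, which is then inherited (up to a controllable error) by $u_n$.

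\textbf{Main obstacle.} The only genuine subtlety is the sharpness of the exponent $\tfrac{3}{2}$: it is a numerical coincidence that $p \uparrow 2$, $L^1$-in-time integrability of $C^{1-\ep'}$, and the H\"older exponent $\theta=\tfrac{1}{3}$ conspire to produce exactly $r=\tfrac{3}{2}$. The non-conservation is morally automatic from the convex-integration machinery rather than a formal consequence of the statement of Theorem \ref{thm:main_thm_2}, but can be read off from its proof by inspecting how the target energy profile is tracked through the iteration.
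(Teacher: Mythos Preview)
Your approach is essentially the same as the paper's: apply the Euler version of the main theorem to a separated family $\{v_n\}$ sharing a common initial value, then use Morrey's embedding $W^{1,q}\hookrightarrow C^{1-\ep'}$ followed by the H\"older interpolation between $L^p_t L^\infty$ and $L^1_t C^{1-\ep'}$ to reach $L^{3/2-\ep}_t C^{1/3}$. The paper carries out exactly this interpolation (stated more tersely).

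Two points worth flagging. First, your choice $v_n=u_0+t\psi_n$ with $L^2$-orthonormal $\psi_n$ does not by itself give separation in $L^p_t L^\infty$; orthonormality is an $L^2$ notion, and the pairwise $L^\infty$ distances could be uniformly bounded or even small. The paper instead fixes all $v_n$ to coincide with a single smooth Euler solution on $[0,1/2]$ and then arranges $\|v_n\|_{L^p L^\infty}=2^n$, which gives exponential separation and makes distinctness immediate. Your idea is easily repaired (just scale the $\psi_n$), but as written the example is not quite right.

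Second, your mechanism for non-conservation---that the scheme ``is flexible enough to prescribe a non-constant target energy profile''---is not how this particular construction operates. Unlike the $C^{1/3-}$ Euler schemes, the iteration here does not track or prescribe the kinetic energy; the only control on $u-v$ is in $L^p_t L^\infty\cap L^1_t W^{1,q}$, which is too weak to pin down $\|u(t)\|_2$ pointwise in time. The non-conservation is instead a byproduct of the temporal concentration in the perturbations: each $w_n$ has small $L^p_t L^\infty$ norm but $\|w_n\|_{L^2_{t,x}}\sim\|R_{n-1}\|_{L^1}^{1/2}$, so the pointwise-in-time $L^2$ norm spikes on shrinking intervals and the energy becomes unbounded in a piecewise-constant fashion (cf.\ Remark~\ref{remark:main_thm}). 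The paper's own proof of the theorem is equally informal on this point, so your honest caveat is appropriate---but the mechanism you cite is the wrong one for this scheme.
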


Theorem \ref{thm:main_euler_onsager} directly follows from Theorem \ref{thm:main_thm_2} via the interpolation $ L^{\frac{3}{2} -}_t C^{\frac{1}{3}} \subset L^{p}_t L^\infty \cap L^{1}_t C^{1-}  $ and the embedding $L^{1}_t W^{1,q} \subset   L^{1}_t C^{1-} $ in both the viscous case and inviscid case.
\begin{remark}
We list a few remarks for Theorem \ref{thm:main_euler_onsager}.
\begin{enumerate}
\item A simple argument shows that such non-conservative Euler solutions can arise in the vanishing viscosity limit of the Navier-Stokes equations (using the same constructions). In particular, these solutions belongs to $L^1_t H^1 \subset L^1_t W^{1,q} $, and hence the energy dissipation rate is finite in $L^{1/2}_t$.

    \item This result appears to establish the first non-conserving solutions with an exact ``$\frac{1}{3}$-H\"older regularity'' in space, albeit with a non-optimal $L^{3/2 -}_t$ exponent in time. The optimal exponent should be $L^{3-}_t C^{1/3} $ based on the positive results~\cite{MR1298949,MR2422377}.

    \item One of the reason for this non-optimal $L^{3/2 -}$ exponent is that the scaling of our solutions is not designed to produce sharp anomalous dissipation rates, It is also the first nonuniqueness result for the Euler equations with ``nonuniqueness scaling'' (i.e. $L^1_t C^{1 -\ep} $) deviating significantly from its ``Onsager scaling'' (i.e. $ L^{\frac{3}{2} -\ep}_t C^{\frac{1}{3}}$) cf.~\cite{MR3210150,MR3614753,2004.00391,2101.09278}.

    \item Our solutions are highly oscillatory in space-time and hence not continuous on $[0,T]\times \TT^d$, which is in stark contrast to previous constructions \cite{MR3374958,MR3302631,MR3530360,MR3866888,1701.08678,2004.00391,2101.09278}. In fact, the kinetic energy of our solutions becomes unbounded in a piece-wise constant fashion: on each interval of regularity $(a_i, b_i) \subset \cup_i (a_i, b_i)$, the energy $\| u(t)\|_2^2$ is a constant, but $\sup_{t\in(a_i,b_i)}\| u(t)\|_2^2 \to \infty $ as $i \to \infty$.

\end{enumerate} 

\end{remark}

\subsection{Main ideas of the construction}
The construction in Theorem \ref{thm:main_thm_2} is based on an iterative scheme to obtain suitable approximate solutions to \eqref{eq:NSE}  that consists of two main steps. The first step is a concentration procedure for producing intervals of regularity while the second step uses the convex integration to finish the iteration. These two steps are completely independent of each other: one can skip the first step and only iterate with the convex integration scheme to obtain nonunique $L^p_t L^\infty$ weak solutions without any intervals of regularity.

The first step is to concentrate the stress error of the approximate solutions to many smaller sub-intervals, allowing us to achieve a small Hausdorff dimension of the singular times. One can consider this step as a temporally intermittent variant of the ``gluing technique'', a key ingredient in \cite{MR3866888} for the resolution of the Onsager conjecture which was temporally homogeneous. In particular, this ensures the final approximate solution is an exact smooth solution on many small intervals. This is done by adding a very small corrector to the existing approximate solution. More specifically, to obtain such a corrector, we first find the correctors on each small interval where it is designed to balance the stress error. Due to the local solvability of the Navier-Stokes or Euler equations, these correctors exist and are smooth provided the interval is sufficiently small. Then we use a partition of unity in time to glue these correctors on each small interval to obtain the corrector on $[0,T]$ which remains small, say in $L^\infty_t H^d$. The partition that we use has a very sharp transition near the ends of each interval, which effectively concentrates all the stress error to those regions. Crucially, adding such a small corrector to the existing solution keeps the size of the stress error unchanged $L^1$ in time up to a constant multiple. After this concentration procedure, the stress error is zero on a large subset of the time interval and thus the size of the concentrated stress error is much larger on its support set.

The second step uses a convex integration scheme to add another perturbation to the concentrated solution,   reducing the size of the stress error. This convex integration technique  has been developed over the last decades, see for instance \cite{MR2600877,MR3090182,MR3374958,MR3302631,1701.08678,MR3866888,MR3898708,MR3951691} and references therein, since its inception to fluid dynamics in \cite{MR2600877}. For the Navier-Stokes equations, this typically consists of adding carefully designed velocity perturbation so that the nonlinear interaction balance the stress error in a suitable sense. In particular, its latest iteration for the transport equation in \cite{2004.09538} allows us to achieve a very high level of temporal concentration of the perturbation in the sense that higher Sobolev norms blow up while their time averages remain bounded. This is done by adding in temporal oscillations that are also highly intermittent in the velocity perturbation. The introduction of temporal concentration in the convex integration scheme allows us to trade temporal integrability for spatial regularity, answering a question raised in \cite[Problem 4.4]{BV2021}. In fact, the velocity perturbation oscillates much faster in time than in space, which is vital to obtain the sharp bounds $L^p_t L^\infty \cap L^1_t W^{1,q}$. To avoid a dimensional loss, the ``building blocks'' used in the scheme are almost spatially homogeneous, in stark contrast to \cite{MR3898708,MR3951691,1809.00600}.

The most difficult and important part of the iterative scheme is ensuring that the perturbation $w$  satisfies the regularity $w \in L^p_t L^\infty \cap L^1_t W^{1,q}$, while at the same time successfully reducing the size of the stress error. This boils down to balancing four different aspects of the perturbation: temporal and spatial oscillation/concentration. Intuitively, it is known that concentration may be used to trade integrability for derivative, while oscillation allows for gaining derivative with differential operators with negative order. In the present work, the leading order effects are temporal concentration and spatial oscillation which contribute most to getting the sharp regularity $L^p_t L^\infty \cap L^1_t W^{1,q}$, whereas temporal oscillation and spatial concentration effectively play minor roles. Among other small technical improvements, the temporal oscillation is a necessary part of the space-time convex integration scheme in \cite{2004.09538} and the spatial oscillation is used to get negligible interference between the building blocks especially in 2D. We refer to the discussion in Section \ref{subsec:osc_con} for more details.

\subsection{Comparison with previous works}

In the last part of the introduction, we compare our main results to the previous works and list a few open questions. We divide the discussion into three topics as follows.

\subsubsection*{Regularity threshold for uniqueness/nonuniqueness }
The first nonuniqueness result for the Navier-Stokes system was established in  \cite{MR3898708} by Buckmaster and Vicol, where finite energy nonunique weak solutions were constructed in 3D. Even though the nonuniqueness is only proved in $C_t L^2$, the iteration scheme in \cite{MR3898708} allows for a very small regularity $H^{\ep}$ for $\ep \ll 1$, which was then used in \cite{1809.00600} to show nonuniqueness at such a regularity. The work \cite{MR3951691} built upon the observation that in higher dimensions, weak solutions can be less intermittent, and thus the regularity of nonuniqueness was improved to $H^{1/200-}$ for $d\geq 4$. In fact, as noted in \cite{Taoblog}, in very high dimension one can show nonuniqueness in $C_t H^{1/2-}$ or $H^{1/2-}$ in the stationary case, although the regularity $H^{1/2 -}$ is still very far from the critical scale $  H^{ \frac{d-2}{2}} $ or $L^{d}$.

Below we compare different results using the scales of space-time Lebesgue spaces $X^{p,q}$. 

\begin{table}[H]
\begin{tabular}{|l|l|l|l|}
\hline
Results                    & Category & Scaling & Range \\ \hline
Leray-Hopf solutions      &   \small Existence   & \small$\frac{2}{p} + \frac{d}{q} = \frac{d}{2}$       & $  q  \geq 2   $ \\ \hline
\cite{MR3898708}   & \small Nonuniqueness       & \small$\frac{2}{p} + \frac{d}{q} = \frac{d}{2}  $        & $q = 2$  and $d=3$   \\ \hline
\cite{1809.00600}   & \small Nonuniqueness       & \small$\frac{2}{p} + \frac{d}{q} = \frac{d}{2} - \ep$        & $q = 2+$  and $d=3$   \\ \hline
\cite{MR3951691}   & \small Nonuniqueness       & \small$\frac{2}{p} + \frac{d}{q} = \frac{d}{2} - \frac{1}{200}$        & $q = 2+$  and $d \geq 4 $   \\ \hline
Theorem~\ref{thm:main_thm_2}     & \small Nonuniqueness        & \small$\frac{2}{p} + \frac{d}{q} = 1+ \ep $       & $ q = \infty $     \\ \hline
Theorem~\ref{thm:FJR_uniqueness} & \small Uniqueness & \small$ \frac{2}{p} + \frac{d}{q} = 1 $      & $   q \leq \infty $   \\ \hline
\end{tabular}
\end{table}

In light of the current state, we expect the nonuniqueness of weak solutions continue to hold in the full range of the super-critical regime $ \frac{2}{p} + \frac{d}{q} >1$. Unfortunately, the method developed in this paper heavily relies on the constraint $p<2$ ($q=\infty$) and is not able to achieve the nonuniqueness of weak solutions in $X^{p,q}$ for $p\geq 2$ and $q \geq 2$.

\subsubsection*{Size of the potential singular set}
Here we discuss our result in the context of partial regularity, more specifically, the size of the singular set in time or space-time. By singular times we mean the union of times at which the solution is not locally smooth, while singular points in space-time refer to points $(t,x)$ where the solution is not locally bounded (in the sense of $\text{ess}\sup$).

By the classical results of Leray, in 3D the Hausdorff dimension of possible singular times of a Leray-Hopf solution is bounded by $1/2$\footnote{This interpretation was made explicit in \cite{MR0452123}.}.  A key step in understanding the (possible) singular set of weak solutions was made by Scheffer \cite{MR454426,MR510154,MR573611} where the notion of suitable weak solutions was introduced. It was proved in~\cite{MR573611} that the singular sets of these suitable weak solutions have finite  $\frac{5}{3}$-dimensional Hausdorff measure in space-time. The theory of partial regularity culminated with the work~\cite{MR673830} by Caffarelli, Kohn, and Nirenberg where they show that $ \mathcal{P}^1(S)=0$, i.e., the $1$-dimensional parabolic Hausdorff measure of the singular set in space-time is zero. Note that these partial regularity results only provide upper bounds on the potential singular sets.

While convincing evidence~\cite{MR814542,MR895215,MR4066585} suggests that the upper bound of $1$-dimensional parabolic singularities in 3D is likely to be sharp for suitable weak solutions, it was unknown whether there exists a weak solution with a nontrivial\footnote{Here by nontrivial we mean that the singular set is not empty or full since smooth solutions have no singularity while the singular set of the solutions in \cite{MR3898708} is the whole space-time domain.} singular set until the work \cite{1809.00600} where the authors constructed wild solutions with a nonempty set of singular times with a dimension strictly less than $1 $. As in \cite{1809.00600}, solutions constructed here are not Leray-Hopf; however, they constitute the first example of 3D weak solutions that surpass the $1/2$ upper bound  (with a nonempty singular set). We remark that  the recent work~ \cite{2102.03244} proved that finite energy weak solutions with intervals of regularity are not typical.

In dimension $ d \geq 4$, the existence of partially regular (in space-time or in time) weak solutions becomes highly nontrivial. In fact, Leray's structure theorem only holds up to $d=4$ and the local energy inequality, a key ingredient in the partial regularity theory, remains absent in $d \geq 4$~\cite[Remark 1.1]{MR2318865}. Despite such a difficulty, the existence of partially regular weak solutions in space-time was established in 4D~\cite{MR510154} by Scheffer and also a recent result~\cite{2008.05802} by Wu. In dimension $d \geq 5$ the existence of partially regular weak solutions( in space-time or in time) was unknown to our knowledge and Theorem \ref{thm:main_thm_2} appears to be the first example of weak solutions with partial regularity in time in dimension $d \geq 5$.

Concerning the partial regularity in space-time,  the singular set of our solutions is the whole spatial domain at each singular time, as with all the other constructions exploiting a convex integration scheme. It might be possible to construct wild solutions that enjoy a certain space-time partial regularity by a space-time variant of the concentration procedure used here.  

\subsubsection*{Anomalous dissipation of the Euler equations}

A recent milestone in incompressible fluid dynamics is the resolution of the Onsager conjecture \cite{MR36116} which states that $\frac{1}{3}$-H\"older is the critical threshold for energy conservation for the 3D Euler equations. While the positive direction was settled in the 90s in \cite{MR1298949} following the first attempt by \cite{MR1302409} and then later refined in \cite{MR1734632,MR2422377}, the negative part was significantly harder and the regularity of counterexamples~\cite{MR1231007,MR1476315} was far below the threshold. Advances in the  negative direction really took off with the modern convex integration approach starting with the seminal paper of De Lellis and Székelyhidi Jr.~\cite{MR2600877}. The approach of using convex integration was refined and improved in a series of works~\cite{MR3090182,MR3254331,MR3374958,MR3530360}. Building upon these works, the threshold $C_t C^{1/3 - }$ was finally reached by Isett~\cite{MR3866888}, see also~\cite{1701.08678}. We remark that in the scale of $L^2$ Sobolev space, recently the authors in \cite{2101.09278} were able to show anomalous dissipation in $C_t H^{1/2-}$.

So far, constructed anomalous weak solutions have a limited regularity  on the whole time axis, namely, the H\"older regularity in space is always below $\frac{1}{3} $. The works~\cite{MR1298949,MR2422377} suggest that insisting on the exact ``$\frac{1}{3}$-H\"older regularity'' in space leads  to $L^3$ being the right critical scale in time for the energy conservation. This exact ``$\frac{1}{3}$-H\"older regularity''  of anomalous dissipation seems to be out of reach for the previous Euler schemes, an issue that has been investigated recently by Isett~\cite{1706.01549}.

Even though our inviscid solutions have a worse global-in-time regularity, they are smooth solutions on a ``large'' portion of the time axis,  and hence the kinetic energy is conserved locally in time. The mechanisms of the failure of the energy conservation are completely different: fast spatial oscillations play a key role in the previous Euler examples, whereas a strong temporal concentration here causes the breakdown at small time scales. It would be very interesting to combine the previous Euler results  with the current paper to show that there exist ``wild solutions'' in $C_t C^{\frac{1}{3} - } $ or $L^{3-}_t C^{\frac{1}{3} }$ that are locally smooth in time away from a small singular set, cf. \cite{2102.06085}. 

\subsection{Notations}
 
For reader's convenience, we collect the notations used throughout the manuscript.
\begin{itemize}
\item $\TT^d  = \RR^d / \ZZ^d$ is the $d$-dimensional torus and is identified with $[0,1]^d$. For any function $f: \TT^d \to \RR$ we denote by $f(\sigma \cdot)$ the $ \sigma^{-1} \TT^d$-periodic function $f(\sigma x)$.  The space $C^\infty_0(\TT^d)$ is the set of periodic smooth functions with zero mean and  $ C^\infty_0(\TT^d, \RR^d )$ is the set of periodic smooth vector fields with zero mean.

\item The Lebesgue space is denoted by $L^p$. For any $f \in L^1(\TT^d) $, its spacial average is
$$
\fint_{\TT^d} f \,dx= \int_{\TT^d} f\,dx.
$$ 
  For any function $f:[0,T] \times \TT^d \to \RR $, denote by $\| f(t) \|_p $ the Lebesgue norm on $\TT^d$ (in space only) at a fixed time $t$. If the norm is taken in space-time, we use $\|f \|_{L^p_{t,x}} $.

  \item For any Banach space $X$, the Bochner space $L^p(0,T;X)$ is equipped with the norm
$$
\Big(  \int_{0}^T \| \cdot  \|_X^p \, dt \Big)^\frac{1}{p},
$$
and we often use the short notations $L^p_t X$ and $\| \cdot \|_{L^p_t X}$. In particular, when $X = L^q(\TT^d)$, we write $  L^p_t L^q = L^p(0,T;L^q(\TT^d))$ for simplicity.

 \item The tensor divergence $\D  A  = \p_j A_{ij}$ for any matrix-valued function $A: \TT^d  \to \RR^{d\times d}$ and the tensor product $f \otimes g  = f_i g_j$ for any two vectors $f,g \in \RR^d$. The notion $\nabla$ indicates full differentiation in space only, and space-time gradient is denoted by $ \nabla_{t,x}$.

\item We write $X \lesssim Y$ if there exists a constant $C>0$ independent of $X$ and $Y$ such that $X \leq C Y $. If the constant $C$ depends on quantities $a_1,a_2,\dots,a_n$ we will write $X \lesssim_{a_1,\dots,a_n}$ or $X \leq C_{a_1,\dots ,a_n} Y $.

\end{itemize}

\subsection{Organization of the paper}
The organization of the rest of the paper is as follows.

\begin{enumerate}
    \item The outline of construction is given in  Section \ref{sec:outline}, where main theorems will be proved assuming the main proposition of the paper, proposition \ref{prop:main}.

    \item The proof of the main proposition is the content of the rest of the paper:
    \begin{enumerate}
        \item We concentrate the stress error to many small sub-intervals in Section~\ref{sec:proof_step_1};
        \item We design a velocity perturbation using convex integration to obtain a new solution pair $(u_1 , R_1)$ in Section \ref{sec:proof_step_2_convex_integration};
        \item Finally we estimate the perturbation along with the new stress error to conclude the proof in Section \ref{sec:proof_step_2}.
    \end{enumerate}

\item 
Appendix \ref{sec:append_weak} includes a proof of Theorem \ref{thm:FJR_uniqueness}.   Appendix \ref{sec:append_tech} contains some technical tools used in the paper, namely an improved H\"older's inequality and antidivergence operators on $\TT^d$.   
\end{enumerate}

\section{Outline of the proof}\label{sec:outline}

The proof of Theorem \ref{thm:main_thm_2} consists of an iterative scheme, which is achieved by repeatedly applying the main proposition of this paper, Proposition \ref{prop:main} to obtain a sequence of solutions $(u_n, R_n)$ to \eqref{eq:NSR}. 
The proof mainly consists of  three goals:
\begin{enumerate}
    \item The convergence of $u_n \to u$ in  $L^2_{t,x}$ and $R_n \to 0$ in $L^1_{t,x}$ so that $u$ is a weak solution of \eqref{eq:NSE}.
    \item Ensuring the final solution verifies $u \in L^p_t L^\infty \cap L^1_t W^{1,q}$ for $p<2$ and $q < \infty$.
    \item Achieving a small dimension of the singular set of $u$ in time.
\end{enumerate}

To this end, we employ a two-step approach:
\begin{itemize}
\item Step 1:  $(u_n, R_n) \xrightarrow{  \text{concentrating the stress error}} (\overline{u}_n , \overline{R}_n) $
\item Step 2: $ (\overline{u}_n , \overline{R}_n) \xrightarrow{\text{space-time convex integration} }(  u_{n+1} ,  {R}_{n+1} )  $
\end{itemize}
where the first step is mainly for achieving a small singular set in time and the second step is to ensure the convergence of $R_n$.

\subsection{The Navier-Stokes-Reynolds system}
Let us first introduce the approximate equations of \eqref{eq:NSE} for our approximate solutions. These approximate solutions solve the so-called Navier-Stokes-Reynolds systems
\begin{equation}\label{eq:NSR}
\begin{cases}
\p_t u - \Delta u  + \D(u \otimes u) + \nabla p = \D R &\\
\D u =0
\end{cases}
\end{equation}
where $R: [0,T]\times \TT^d \to \mathcal{S}^{d \times d}_0$ is a traceless symmetric matrix called Reynolds stress.

Since the associated pressure $p$ can be uniquely determined by the elliptic equation:
$$
\Delta p = \D \D R - \D\D( u \otimes u) =   \p_i \p_j ( R_{ij} - u_i u_j) 
$$
together with the usual zero spatial mean condition $\fint_{\TT^d} p \,dx = 0$, throughout the paper, we denote the solution of \eqref{eq:NSR} by $(u,R)$. 

This system \eqref{eq:NSR} arises naturally when studying weak solutions of the Navier-Stokes equations. The Reynolds stress $R$ emerges as the noncommutativity between average ensembles and the quadratic nonlinearity.

In the inviscid case, we can just drop the Laplacian term in \eqref{eq:NSR} and the system becomes the so-called Euler-Reynolds equations, which was widely used in constructing non-conserving weak solutions of the Euler equations in the context of Onsager's conjecture~\cite{MR3090182,MR3254331,MR3374958,MR3530360,MR3866888,1701.08678}. 

Since the Laplacian plays no role in our construction and is treated as a source of errors, in what follows we simply use \eqref{eq:NSR} to prove all the main theorems for the viscous case, and the results for the inviscid case can be obtained by dropping the Laplacian in \eqref{eq:NSR}.

\subsection{Concentrating the stress error}
As stated in the introduction, we proceed with two steps to prove this main proposition. The first step is a procedure that concentrates the stress error into many smaller sub-intervals.

Given $(u_{n-1},R_{n-1})$, we divide the time interval $[0,T]$ into smaller sub-intervals $J_i$ of length $\tau^{\ep}>0$, where $\tau>0$ will be chosen to be very small depending on $(u_{n-1},R_{n-1})$. So the total number of sub-intervals is $\sim \tau^{-\ep}$.

On each sub-interval $J_i$, we solve a generalized Navier-Stokes equations linearized around $(u_{n-1},R_{n-1})$ to obtain a corrector $v_i$ on $J_i$. More precisely, $v_i: J_i \times \TT^d \to \RR^d$ solves
\begin{equation}\label{eq:concentrate_stress}
\begin{cases}
\p_t v_{i} - \Delta v_{i } + \D( v_{i} \otimes v_{i}) + \D(v_{i} \otimes u  ) + \D( u \otimes v_{i}   ) + \nabla q_i=  -\D R &\\
\D v_i = 0 &\\
v_i(t_i) = 0
\end{cases}
\end{equation}
where $(u,R) = (u_{n-1} , R_{n-1})$, so that $u_{n-1} + v_i$ is an exact solution (of the Navier-Stokes equations) on $J_i$. The solvability of \eqref{eq:concentrate_stress} and smoothness of $v_i$ on $J_i$ are guaranteed by taking $\tau>0$ sufficiently small.

To concentrate the error and obtain a solution on $[0,T]$, we apply a sharp cutoff $\chi_i$ to the corrector $v_i$ and obtain the glued solution $ \overline{u}_{n-1}$ defined by
$$
\overline{u}_{n-1} := u_{n-1} +\sum_{i} \chi_i v_i.
$$
Specifically, each $\chi_i$ equals $1$ on a majority of the sub-interval $J_i$, but $ \chi_i =0 $ near endpoints of each $J_i$ of scale $\sim \tau$. Since $\ep\ll 1$, the cutoff $\chi_i$ is very sharp when comparing to the length of the sub-interval $J_i$. These sharp cutoffs $\chi_i$ ensure that $\overline{u}_{n-1} $ is well-defined and smooth on $[0,T]$.

On one hand, due to the sharp cutoff $ \chi_i$, the stress error $\overline{R}_{n-1}$ associated with $\overline{u}_{n-1}$ will only be supported near the endpoints of $J_i$ of time scale $\tau$.  In other words,  the temporal support of $\overline{R}_{n-1}$ can be covered by $\sim \tau^{-\ep}$ many intervals of size $\sim \tau$, from which one can already see a small dimension of the singular set of the final solution.

On the other hand,  the corrector $v_i$ is very small, say in $L^\infty_t H^d$, since it starts with initial data $0$ and we can choose time scale $\tau^{\ep} = |J_i|$ to be sufficiently small. More importantly, we can show that the new stress error $\overline{R}_{n-1}$ associated with $\overline{u}_{n-1}$ satisfies the estimate
$$
\| \overline{R}_{n-1} \|_{L^1_t L^r} \lesssim_{\ep,r} \|  {R}_{n-1}  \|_{L^1_t L^r} \quad \text{for all $1 < r< \infty$},
$$
with an implicit constant independent of the time scale $\tau>0$. In other words, concentrating the stress error $R_{n-1}$ to $ \overline{R}_{n-1}$ cost a loss of a constant multiple when measuring in $L^1$ norm in time.

\subsection{Space-time convex integration}

The next step is to use a convex integration technique to reduce the size of $ \overline{R}_{n-1}$ by adding further a perturbation $w_n$ to $\overline{u}_{n-1}$ to obtain a new solution $(u_n, R_n)$ of \eqref{eq:NSR}. The perturbation $w_n$ and the new stress $R_n$ satisfies the equation
$$
\D R_{n}=  \D \overline{R}_{n-1} +\D(w_{n}  \otimes w_{n} ) +  \p_t w_n  -\Delta w_{n} + \D(\overline{u}_{n -1 }  \otimes w_{n} ) + \D(w_{n}  \otimes \overline{u}_{n -1} )  + \nabla P_n,
$$
for a suitable pressure $P_n$. The heuristic is that the high-high to low cascade in space-time  of $w_{n}  \otimes w_{n}$ can balance the old stress error $\overline{R}_{n-1}$ in the sense that
\begin{equation}\label{eq:outline_cascade}
  \D(\overline{R}_{n-1} + w_{n}  \otimes w_{n} )= \text{High Spacial Freq. Term} + \text{High Temporal Freq. Term}+  \text{Lower Order Terms},   
\end{equation}
where the ``High Temporal Freq. Term'' above will further be balanced by a part of $\p_t w_n$, as in~\cite{MR3898708,1809.00600} and~\cite{2004.09538}. However, one of the fundamental differences to ~\cite{MR3898708,1809.00600} is that this additional ``convex integration in time'' requires no additional constraint of oscillation and concentration and is basically free, which is crucial to obtain the sharp regularity $L^p_t L^\infty \cap L^1_t W^{1,q}$.

In particular, executing the scheme of \cite{2004.09538} requires two crucial ingredients:
\begin{enumerate}
    \item Suitable stationary flows as the spatial building blocks that can achieve some level of spatial concentration.
    \item The use of intermittent temporal functions to oscillate the spatial building blocks in time.
\end{enumerate}

Once $(1)$ is available, it is relatively straightforward to implement $(2)$. On the technical side, we use the stationary Mikado flows introduced in \cite{MR3614753} as the spatial building blocks. These are periodic pipe flows that can be arranged to be supported on periodic cylinders with a small radius. In other words, Mikado flows can achieve a $d-1$-dimensional concentration on $\TT^d$, which is more than enough in view of $(1)$. It is worth noting that in the framework of \cite{MR3898708}, stationary Mikado flows are not sufficiently intermittent to be used for the Navier-Stokes equations in dimension $d \leq 3$, cf. \cite{MR3951691}.

The space-time cascade~\eqref{eq:outline_cascade} imposed a relation between the perturbation $w_n$ and the stress error $\overline{R}_{n-1}$ as
\begin{equation}\label{eq:outline_w_n}
  \|w_n \|_{L^2_{t,x}} \sim \| \overline{R}_{n-1} \|_{L^1_{t,x}}.
\end{equation}
The relation \eqref{eq:outline_w_n} will imply the convergence in $L^2_{t,x}$ of the approximate solutions $u_{n}$ as long as one can successfully reduce the size of the stress error
\begin{equation}\label{eq:outline_R_n}
 \|  {R}_{n } \|_{L^1_{t,x}} \ll \| \overline{R}_{n-1} \|_{L^1_{t,x}}   .
\end{equation}

In particular, special attention will be paid to estimating the temporal derivative part of the new stress error 
\begin{equation}\label{eq:outline_1}
  \D R_{\Tem } =     \p_t w_n,
\end{equation}
and achieving the regularity of the perturbation
\begin{equation}\label{eq:outline_2}
\| w_n\|_{L^p_t L^\infty } + \| w_n\|_{L^1_t W^{1,q} } 
 \ll 1 .
\end{equation}

These two constraints \eqref{eq:outline_1} and \eqref{eq:outline_2} require a very delicate choice of parameters when designing the perturbation $w_n$. On one hand, \eqref{eq:outline_1} implies the temporal frequency can not be too large, relative to the spatial frequency, otherwise the time derivative will dominate. On the other hand, \eqref{eq:outline_2} requires a large temporal frequency so that temporal concentration can offset the loss caused by going from $L^2$ to $L^\infty$ or  $W^{1,q}$ in space in relation to \eqref{eq:outline_w_n}. 

Nevertheless, it turns out that the scheme in \cite{2004.09538} is flexible enough to accommodate \eqref{eq:outline_1} and \eqref{eq:outline_2}. We could somehow explain why this is possible. Roughly speaking, the method in \cite{2004.09538} is $L^2_{t,x}$-critical.  While it is difficult for $w_n$ to go above the $L^2_{t,x}$ regularity, we trade $L^2$ for $L^p$ (resp. $L^1$) in time to obtain an improvement of $L^2$ to $L^\infty $ (resp. $W^{1,q}$) in space.  We provide a scaling analysis below.

 \subsection{Oscillation and concentration}\label{subsec:osc_con}

We do this computation in general dimension $d \geq 2$ and $D \in [0,d]$ denotes the spatial intermittency, cf. \cite{MR0495674,MR1428905}.

We start with a velocity perturbation in $L^2_{t,x}$ with a certain decay given by the previous stress error,
$$
\|w_n \|_{L^2_{t,x}} \to 0 \quad \text{as $n \to \infty$}.
$$
Denote the spatial frequency by $  \l$ and the temporal frequency by $ \kappa$, namely
$$
\|\p_t^m \nabla^s w_n \|_{L^2_{t,x}} \lesssim \kappa^m  \l^s.
$$
The intermittency parameter $D\in [0,d]$ in space dictates the concentration level of $w_n$ and the scaling law
\begin{equation}\label{eq:outline_3}
  \| w_n (t) \|_{L^q} \lesssim \| w_n (t)\|_{L^2}  \l^{(d-D)( \frac{1 }{2} - \frac{1}{q})} \quad \text{for all $1 \leq q \leq \infty$}.
\end{equation}
As for the temporal scaling, we assume for simplicity a full dimensional concentration in time:
\begin{equation}\label{eq:outline_4}
\| w_n \|_{L^p_t L^q} \lesssim \kappa^{\frac{1}{2} - \frac{1}{p}}\| w_n \|_{L^2_t L^q}  \sim \kappa^{\frac{1}{2} - \frac{1}{p}}  \l^{(d-D)( \frac{1 }{2} - \frac{1}{q})} \quad \text{for all $1 \leq p,q \leq \infty$}.
\end{equation}

With such scaling laws, we effectively assume a negligible amount of temporal oscillation and the goal then boils down to finding a working choice of $D$ in terms of the given parameters $d,p,q$. In other words, we need to find a balance between spatial oscillation and spatial concentration.

By the scaling relations \eqref{eq:outline_3} and \eqref{eq:outline_4}, the stress error contributed by the time derivative \eqref{eq:outline_1} satisfies
\begin{equation}\label{eq:outline_5}
 \|  \D^{-1}(     \p_t w_n) \|_{L^1_{t,x}} \lesssim  \kappa^{ \frac{1}{2}}  \l^{-1}  \l^{- \frac{d-D}{2}},
\end{equation}
where we assume formally an inverse divergence $ \D^{-1}$ gains one full derivative in space.
The regularity condition \eqref{eq:outline_2} becomes
\begin{equation}\label{eq:outline_6}
\| w_n\|_{L^p_t L^\infty } \sim \kappa^{\frac{1}{2} -\frac{1}{p}}  \l^{ \frac{d-D}{2}} \ll 1,
\end{equation}
and
\begin{equation}\label{eq:outline_7}
 \| w_n\|_{L^1_t W^{1,q} } \sim \kappa^{-\frac{1}{2}}  \l^{1 + \frac{d-D}{2} - \frac{d-D}{q}} \ll 1.
\end{equation}

In particular, \eqref{eq:outline_5} and \eqref{eq:outline_7} imply that
\begin{equation}\label{eq:outline_8}
 \l^{1 + \frac{d-D}{2} - \frac{d-D}{q}} \ll \kappa^{\frac{1}{2}}  \ll  \l^{1 + \frac{d-D}{2}},
\end{equation}
which always has some room since $q<\infty$. Then all we need to is to choose $D$ to ensure \eqref{eq:outline_6} and \eqref{eq:outline_8}. One can already see that $D$ should be very close to $d$, which means we need much more spatial oscillation than spatial concentration. Indeed, solutions to \eqref{eq:outline_6} and \eqref{eq:outline_8} do exist and we refers to Section \ref{subsection:choice_of_parameters} for the exact choice used.

\subsection{The main iteration proposition}

We are ready to introduce the main iteration proposition of the paper that materializes the above discussion. To simplify the presentation, let us introduce the notion of well-preparedness of solutions to \eqref{eq:NSR}, which encodes the small Hausdorff dimension of the singular set in time. Throughout the paper, we take $T=1$ and assume $0<\ep<1$ without loss of generality.
\begin{definition}
We say a smooth solution  $(u, R)$ of \eqref{eq:NSR} on $[0,1]$ is well-prepared if there exist a set $I$ and a length scale  $ \tau  >0$ such that $I$ is a union of at most $ \tau ^{-\ep}$ many closed intervals of length $5\tau $ and
$$
R(t,x) =0 \quad \text{if} \quad \dist(t, I^c) \leq \tau .
$$.
\end{definition}
With this definition, to ensure the solution $u$ has intervals of regularity with a small residue set of Hausdorff dimension $\lesssim \ep$, it suffices to construct approximate solutions $(u_n ,R_n)$ that are well-prepared for some $I_n$ and $\tau_n$ such that
$$
I_{n} \subset I_{n-1} \quad \text{and}\quad \tau_n \to 0.
$$

The main proposition of this paper states as follows.

\begin{proposition}[Main iteration]\label{prop:main}
For any $\ep>0$ and $p < 2$, there exists a universal constant $M=M(\ep,p)>0$ and $r>1$ depending only on $p$ and $q$ such that the following holds.

Let $\delta>0$ and  $(u,R)$ be a well-prepared smooth solution of \eqref{eq:NSR} for some set $\widetilde{I}$ and a length scale $ \widetilde{\tau }>0$. Then there exists another well-prepared smooth solution $(u_1,R_{1})$  of \eqref{eq:NSR} for some set $I \subset \widetilde{I}$ with  $ 0,1 \notin I $ and $\tau  < \widetilde{\tau}/2$ such that 
$$
\|R_{1}  \|_{L^1(0,1; L^r( \TT^d  ))} \leq \delta .
$$
Moreover, the velocity perturbation  $w : = u_{1}  - u $ satisfies
\begin{enumerate}
\item  The temporal support of $w$ is contained in $I$, i.e.
$$
\Supp w \subset I \times \TT^d;
$$

\item The $L^2_{t,x} $ estimate,
$$
 \|w \|_{L^2([0,1] \times \TT^d )} \leq M  \| R \|_{L^1([0,1] \times \TT^d )} ;
$$

\item The $L^{p}_t L^\infty \cap L^{1}_t W^{1,q}   $  estimate,
$$
\|w \|_{L^{p }(0,1;   L^\infty (\TT^d))} + \|w \|_{L^{1 }(0,1;   W^{1,q}  (\TT^d))} \leq    \delta .
$$

\end{enumerate} 

\end{proposition}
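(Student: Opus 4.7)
The proof naturally splits along the two-step strategy already outlined: a concentration step producing an intermediate pair $(\overline{u},\overline{R})$, followed by a convex-integration step producing $(u_1,R_1)$. The plan is to set one small length scale $\tau \ll \widetilde{\tau}$ (depending on $\delta$ and on the norms of $(u,R)$), partition $[0,1]$ into $\sim \tau^{-\ep}$ sub-intervals $I_i$ of length $\tau^{\ep}$, and on each $I_i$ solve the linearized Navier-Stokes-Reynolds system
\begin{equation*}
\p_t v_i - \Delta v_i + \D(v_i\otimes v_i) + \D(v_i\otimes u) + \D(u\otimes v_i) + \nabla q_i = -\D R,\qquad v_i(t_i)=0,
\end{equation*}
so that $u+v_i$ is an exact Navier-Stokes solution on $I_i$. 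Standard energy estimates linearized around $u$ (which is smooth on $\widetilde{I}$ and where $R\neq 0$) give $v_i$ arbitrarily small in $L^\infty_t H^N$ for any $N$ once $\tau$ is small enough. Choosing sharp cutoffs $\chi_i$ equal to $1$ on the bulk of $I_i$ and vanishing in $\tau$-neighborhoods of the endpoints, set $\overline{u}=u+\sum_i \chi_i v_i$; the associated new Reynolds stress $\overline{R}$ is supported in $\sim \tau^{-\ep}$ intervals of length $\sim\tau$ and, because the cutoff acts linearly on the perturbation while the nonlinearity vanishes on the support of $\chi_i$, satisfies $\|\overline{R}\|_{L^1_tL^r}\lesssim \|R\|_{L^1_tL^r}$ with a constant independent of $\tau$.

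The second step builds the perturbation $w$ by the space-time intermittent convex-integration ansatz of the kind introduced in \cite{2004.09538}, namely
\begin{equation*}
w = w_p + w_c + w_t,\qquad w_p \;=\; \sum_k a_k(t,x)\, g_k(\kappa t)\, \mathbf{W}_k(\lambda\, \cdot),
\end{equation*}
where $\mathbf{W}_k$ are stationary Mikado pipe flows (supported in periodic cylinders of small radius, giving an adjustable spatial concentration parameter $D\in[0,d]$), $g_k$ are disjointly supported intermittent temporal profiles normalized in $L^2(dt)$, and the amplitudes $a_k(t,x)$ are obtained from the Geometric Lemma applied to $\rho\,\Id-\overline{R}/\rho$ on the support of $\overline{R}$, with $\rho$ a mollified pointwise size of $\overline{R}$. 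The divergence corrector $w_c$ restores $\D w=0$, and the temporal corrector $w_t$ is chosen so that $\p_t w_t + \mathbb{P}_{\neq 0}\D(w_p\otimes w_p)$ only contains terms of nonzero spatial frequency (absorbing the high-temporal-frequency cascade). The new Reynolds stress is then $R_1 = R_{\mathrm{lin}}+R_{\mathrm{cor}}+R_{\mathrm{osc}}+R_{\mathrm{rem}}$, each piece inverted by the antidivergence operator in Appendix \ref{sec:append_tech} and estimated in $L^1_tL^r$ by the improved H\"older inequality; the gain comes from the spatial oscillation $\lambda^{-1}$ on the high-frequency pieces and from $\kappa^{-1}$ on the temporal error.

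The technical heart is the parameter balance sketched in the outline: with spatial intermittency $D$ close to $d$, the perturbation satisfies $\|w\|_{L^2_{t,x}}\lesssim\|\overline{R}\|_{L^1_{t,x}}^{1/2}$ while the bounds
\begin{equation*}
\|w\|_{L^pL^\infty}\lesssim \kappa^{\frac12-\frac1p}\lambda^{(d-D)/2},\qquad \|w\|_{L^1W^{1,q}}\lesssim \kappa^{-\frac12}\lambda^{1+(d-D)/2-(d-D)/q},
\end{equation*}
together with the temporal-error estimate $\|\D^{-1}\p_t w_t\|_{L^1_{t,x}}\lesssim \kappa^{1/2}\lambda^{-1-(d-D)/2}$ leave a nonempty window because $p<2$ and $q<\infty$; one picks $\lambda$ very large, then $\kappa$ inside the window $\lambda^{1+(d-D)/2-(d-D)/q}\ll \kappa^{1/2}\ll \lambda^{1+(d-D)/2}$, then $D$ close enough to $d$ to make the $L^pL^\infty$ bound small. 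The main obstacle is exactly this juggling of four competing constraints --- spatial oscillation vs.\ spatial concentration and temporal concentration vs.\ the temporal-derivative error --- while simultaneously ensuring that $\Supp w \subset I$ (achieved by multiplying all profiles by a temporal cutoff adapted to $\Supp \overline{R}$) and that well-preparedness is propagated at scale $\tau_1\ll\tau$ (achieved by declaring $I$ to be the $\tau$-thickening of $\Supp\overline{R}$, which is a union of at most $\tau^{-\ep}$ closed intervals of length $5\tau$). The $L^2$ estimate, the $L^pL^\infty\cap L^1W^{1,q}$ smallness, and $\|R_1\|_{L^1L^r}\le\delta$ then follow by taking $\lambda$ sufficiently large at the end of the argument.
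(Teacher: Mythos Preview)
Your proposal is correct and follows essentially the same two-step route as the paper: concentration via the linearized correctors $v_i$ and sharp gluing (Proposition~\ref{prop:main_step_1}), followed by the intermittent space-time convex integration with stationary Mikado flows and temporal profiles (Proposition~\ref{prop:main_2}), with the same parameter hierarchy and the same decomposition of $R_1$ into linear, corrector, and oscillation pieces. Two minor imprecisions worth noting: the paper uses a \emph{single} temporal profile $g_\kappa$ for all directions rather than disjointly supported ones, and the key reason $\|\overline{R}\|_{L^1_tL^r}\lesssim\|R\|_{L^1_tL^r}$ is not that the nonlinearity vanishes on $\Supp\chi_i$ but that the dominant contribution $\sum_i(\partial_t\chi_i)\,\mathcal{R}v_i$ satisfies $\|\partial_t\chi_i\|_{L^1}\lesssim 1$ together with $\|\mathcal{R}v_i\|_{L^\infty_tL^r}\lesssim\int_{t_i}^{t_{i+1}}\|R(t)\|_{L^r}\,dt$ (Proposition~\ref{estimates_on_vi}).
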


\begin{remark}
We list a few comments concerning the main proposition.
\begin{enumerate}
\item It will be clear that the construction adapts no change for the Euler equations, except dropping the Laplacian. 

\item The parameter $r>1$ is used to ensure the $L^r$ boundedness of the Calder\'on-Zygmund singular integral and is very close to $1$.
    
\item Due to the local well-preparedness, on large portions of the time axis, the solutions are exact solutions of the Navier-Stokes equations (or the Euler equations) and we do not touch them in the future.

\end{enumerate}
\end{remark}

We will break down Proposition \ref{prop:main} into two separate propositions, whose proof will be the context of Section \ref{sec:proof_step_1} and respectively Section \ref{sec:proof_step_2_convex_integration} and Section \ref{sec:proof_step_2}.

\subsection{Proof of main theorems}
We first deduce Theorem \ref{thm:main_short} and Theorem \ref{thm:main_sharp_short} from Theorem \ref{thm:main_thm_2}.

\begin{proof}[Proof of Theorem \ref{thm:main_short}]
Given initial data $u_0 \in L^2(\TT^2) $, we let $v$ be the Leray-Hopf weak solution on $[0,1]$ with initial data $v(0) = u_0$.

Since $ {v}$ is Leray-Hopf on $[0, 1]$,   $ {v}$ is in fact smooth on $(0, 1]$. Let $\widetilde{v}:[1/2,1] \times \TT^2 \to \RR^2  $ be a smooth divergence-free vector field that coincide with $v$ on $[1/2, 3/4]$ but
\begin{equation}\label{eq:proof_thm1.5_1}
\| \widetilde{v} - v \|_{L^p(1/2,1; L^\infty(\TT^d)} \geq 1 .
\end{equation}
Then we can apply Theorem \ref{thm:main_thm_2} for the vector field $ \widetilde{v} $  and some $0<\ep<1$ to obtain a weak solution $\tilde{u} $ on $[ 1/2 ,1] \times \TT^2$. The conclusion follows once we define the weak solution $u:[0,1]\times \TT^2 \to \RR^2$ by
\begin{equation*}
u = 
\begin{cases}
{v} & \text{if $t   \in [0, 1/2 ]$}\\
\widetilde{u} & \text{if $t \in [1/2,1]$}.
\end{cases}
\end{equation*}
 
Indeed, the new glued solution $u$ is still a weak solution of \eqref{eq:NSE} due to the fact that $\widetilde{u}$ and $\widetilde{v}$ coincide on $[1/2, 1/2+\delta]$ for some $\delta>0$. Moreover, by \eqref{eq:proof_thm1.5_1}
$$
\|u -v \|_{L^p( 1/2,1 ; L^\infty} \geq \|    \widetilde{v}  - v \|_{L^p( 1/2,1 ; L^\infty}- \| \widetilde{u}  -\widetilde{v}  \|_{L^p( 1/2,1 ; L^\infty}  \geq 1-\ep >0,
$$
which implies $u \neq v$. The $L^2$ continuity at $t=0$ follows from the fact that $ u_{[0,1/2]}\equiv  v_{[0,1/2]}$ and $v$ is the Leray-Hopf solution.

\end{proof}

\begin{proof}[Proof of Theorem \ref{thm:main_sharp_short}]
To prove both points, it suffices to show that given a weak solution  $v \in L^p(0,1; L^\infty(\TT^d)$  of \eqref{eq:NSE} with at least one interval of regularity, there exists a non-Leray-Hopf weak solution $u \in L^p(0,1; L^\infty(\TT^d)$ having intervals of regularity and with the same initial data.

Let $[a,b]$ be an interval of regularity of the weak solution $v$. We first choose a smooth, divergence-free vector field $\widetilde{v}:[a,1]\times \TT^d \to \RR^d $ such that 
$$ 
\widetilde{v}|_{ [a,b] \times \TT^d} \equiv v|_{ [a,b] \times \TT^d},
$$ 
but
\begin{equation}\label{eq:proof_thm1.6_1}
\|\widetilde{v} - v \|_{L^p(  a,1 ; L^\infty(\TT^d) )} \geq 1  \quad \text{and} \quad  \|\widetilde{v}   \|_{L^p(  a,1 ; L^2(\TT^d) )} \geq 1+ \| v(0)\|_{L^2(\TT^d) }, 
\end{equation}
As in the proof of Theorem \ref{thm:main_short}, we apply Theorem \ref{thm:main_thm_2}   to $ \widetilde{v}$ and $\ep<1$ to obtain a weak solution $\widetilde{u}   \in L^p(  a,1 ; L^\infty(\TT^d)) $ such that
\begin{equation}\label{eq:proof_thm1.6_2}
\|\widetilde{u} -\widetilde{v} \|_{L^p(  a,1 ; L^\infty(\TT^d) )} \leq \ep  .
\end{equation}

We then define a new solution $u: [0,1] \times \TT^d \to \RR^d$ by
\begin{equation*}
u = 
\begin{cases}
v & \text{if $t \in [0,a]$}\\
\widetilde{u}  & \text{if $t \in [a, 1]$}.
\end{cases}
\end{equation*}
The glued solution $u \in L^p_t L^\infty$ is still a weak solution due the smoothness of both $v$ and $\widetilde{u} $ near $t =a$. Next, $u$ and $v$ are different  since
$$
\| u -v \|_{L^p_t L^\infty } = \|\widetilde{u}  -v \|_{L^p ( a,1  ;L^\infty )} \geq \|\widetilde{v}  - v \|_{L^p ( a,1  ;L^\infty )}  - \|\widetilde{u}  - \widetilde{v} \|_{L^p ( a,1  ;L^\infty )} \geq 1 -\ep ,
$$
where we have used \eqref{eq:proof_thm1.6_1} and \eqref{eq:proof_thm1.6_2}.
Finally, $u$ can not be a Leray-Hopf solution since 
$$
\| u \|_{L^p_t L^2} \geq \|\widetilde{u}  \|_{L^p ( a,1  ;L^2 )} \geq \|\widetilde{v}  \|_{L^p ( a,1  ;L^2 )} -\|\widetilde{u}  - \widetilde{v} \|_{L^p ( a,1  ;L^\infty )}  > \|v(0) \|_2
$$
and Leray-Hopf solutions must have a non-increasing $L^2$ norm.
\end{proof}

Next, we prove Theorem \ref{thm:main_euler_onsager} in the case of the Euler equations. The proof is identical for the Navier-Stokes equations since Proposition \ref{prop:main} holds for both equations.

\begin{proof}[Proof of Theorem \ref{thm:main_euler_onsager}]
We first choose an infinite sequence of smooth divergence-free vector fields $v_n \in C^\infty_0([0,1] \times \TT^d) $ such that:
\begin{enumerate}
    \item On $[0, \frac{1}{2}]$, every $v_n$ coincides and is equal to an exact solution of the Euler equations;
    \item Each $v_n$ satisfies
    $$
    \| v_n \|_{L^p( 0,1 ; L^\infty)}= 2^n.
    $$
\end{enumerate}

We now apply Theorem \ref{thm:main_euler} for the Euler equations with the vector fields $v_n$ and some $\ep<  \frac{1}{2}$ to obtain infinite many weak solution $u_n \in L^p_t L^2 \cap L^1_t W^{1,q}$ of the Euler equations.

On one hand, since each $v_n$ we used agrees and solves the Euler equations on $[0,1/2]$, by Theorem \ref{thm:main_thm_2}, for any $n \geq 1$, there exists $\tau_n>0$ such that $u_n$ coincides with $v_n $ on $[0,\tau_n]$\footnote{In fact, $\tau_n$ can be taken to be $1/2$ if we use Proposition \ref{prop:main} instead of Theorem \ref{thm:main_thm_2}.}. So every weak solution $u_n$ has the same initial data.

On the other hand, these weak solutions $u_n$ are different since for any $n > m$,
$$
\|u_n - u_{m}\|_{L^p_t L^\infty} \geq \|u_n - v_{n}\|_{L^p_t L^\infty}  -\|u_m - v_{m}\|_{L^p_t L^\infty}  \geq  2^{n}-1,
$$
where we have used that $ \ep<  \frac{1}{2}$.

It remains to show the regularity $u_n \in  L^{3/2 -\ep}_t C^{1/3} \cap L^1_t C^{1-\ep}  $. This can be done by standard interpolations. Since for any $\ep>0$ there exists $q<\infty$ such that the embedding
$$
W^{1,q}(\TT^d) \hookrightarrow   C^{1-\ep}(\TT^d) 
$$
holds, we get $L^1_t C^{1-\ep} $. The bound $ u \in L^{3/2 -\ep}_t C^{1/3}$ can then be obtained by interpolating $L^p_t L^\infty$ with $L^1_t C^{1-\ep{}}$ (with different $\ep$).
\end{proof}

Finally, we prove Theorem \ref{thm:main_thm_2} and Theorem \ref{thm:main_euler} assuming Proposition \ref{prop:main}.
\begin{proof}[Proof of Theorem \ref{thm:main_thm_2} and Theorem \ref{thm:main_euler}]

Let $u_0 = v $ and
$$
R_0 = \mathcal{R}\Big(  \p_t u_0 -\Delta u_0 + \D(u_0 \otimes u_0 ) \Big)
$$
where $\mathcal{R}$ is an inverse divergence operator on $\TT^d$ defined in Appendix \ref{sec:append_tech}.

Since the given vector field $v$ has zero spatial mean for each $t\in [0,1]$, $(u_0 ,R_0)$ solves \eqref{eq:NSR} trivially and is well-prepared for  $I=[0,1]$ and $\tau=1$. We construct a sequence of solution $(u_n ,R_n)$ for $n \in \NN$ as follows.

Given $(u_{n-1} ,R_{n-1})$, we apply Proposition~\ref{prop:main} with the parameter
$$
  \delta_n := 2^{-n} \min\left\{  \|R_{n-1} \|_{L^1 L^r} , \ep \right\}
$$
to obtain $(u_{n} ,R_{n} )$. Denote the perturbations by $w_{n } : =u_{n } - u_{n-1}$ for $n \geq 1 $.

As a result, we have
$$
\| R_n \|_{L^1_{t,x}} \leq \| R_n \|_{L^1_t L^r} \leq \delta_{n},
$$
and
\[
\|w_{n} \|_{L^p_t L^\infty} + \|w_{n} \|_{L^1_t W^{1,q}} \leq     \delta_{n},
\]
for any $n \geq 1 $. Also,
\begin{align*}
\|w_{n} \|_{L^2_{t,x}} \leq M    \| R_{n-1} \|_{L^1_{t,x} } \leq M    \delta_{n-1},
\end{align*}
for any $n \geq 2$.
 
Since $u_n$ is Cauchy in $L^2_{t,x} \cap L^p_t L^\infty 
\cap L^1_t W^{1,q}$, there exists  $u \in  L^2_{t,x} \cap L^p_t L^\infty 
\cap L^1_t W^{1,q} $ such that
$$
u_n \to u  \quad \text{in} \quad L^2_{t,x}  \cap L^p_t L^\infty \cap L^1_t W^{1,q}.
$$
To show that $u$ is a weak solution of \eqref{eq:NSE}, we need to verify the $a.e.$ in time divergence-free condition and the weak formulation. As we will show below, the Lebesgue measure of the singular set in time is zero, so $u$  is $a.e.$ in time divergence-free by construction.

Now we show that the weak formulation holds. Indeed, take any $\varphi \in \mathcal{D}_T$, then using the weak formulation of \eqref{eq:NSR} or integrating by parts we have
$$
\int_{\TT^d} u_n(0,x) \varphi(0,x ) \, dx = -   \int_{[0,1]\times \TT^d} (u_n\cdot \Delta \varphi + u_n\otimes u_n : \nabla \varphi + u_n \cdot \partial_t \varphi)  \, dx dt  - \int_{[0,1]\times \TT^d} R_n : \nabla \varphi  \, dx dt .
$$
Since $u_n \to u $ in $L^2_{t,x}$, $R_n \to 0$ in $L^1_{t,x}$, and $u_n(0,x) \equiv v(0,x)$ for all $n \geq 1$, it follows that all terms above converge to their natural limits and hence $ u$ is a weak solution of \eqref{eq:NSE}.

Moreover,
$$
\| u -v \|_{L^p_t L^\infty 
\cap L^1_t W^{1,q}} \leq \sum_{n\geq 1}\left[ \| w_n \|_{L^p_t L^\infty } +  \| w_n \|_{L^1_t W^{1,q} }\right] \leq \sum_{n\geq 1} 2^{-n} \ep  \leq \ep.
$$

Finally, we show the structure of the intervals of regularity of $u$.  Recall that each solution $(u_n , R_n)$ is well-prepared, so let us denote by $\mathcal{B}_n \subset [0,1] $ and $ \tau_n >0$   the set and length scale of the well-preparedness of $(u_n , R_n)$.

 
Let
\[
\mathcal{I} =   \bigcup_{n \geq 0} \mathcal{B}_n^c \setminus \{0,1\},
\]
where the complement is taken in $[0,1]$. Note that $\mathcal{B}_{n}^c \subset  (\Supp w_n)^c  $ for all $n \geq 1$, and $\mathcal{B}_n^c\setminus \{0,1\} $ are monotonically increasing open sets. Therefore $w_k(t)\equiv 0$ on $\mathcal{B}_{n}^c$ for all $k\geq n$, and hence $u(t) \equiv  u_n(t)$ on $\mathcal{B}_{n}^c$ for each $n$. Since $u_n$ is smooth, this proves that $u|_{\mathcal{I}\times \TT^d }\in C^\infty(\mathcal{I} \times \TT^d)$. By construction, each $u_n$ also agrees with each other for $0 \leq t \leq \tau_1 /2$, and hence $u$ agree with the unique smooth solution with initial data $v(0)$ near $t=0$.

Finally, since each $\mathcal{B}_{n}$ is a finite union of closed intervals, we can write $\mathcal{I} $ as a union of countably many open intervals:
$$
\mathcal{I} = \bigcup_{i\geq 0} (a_i, b_i).
$$
For the Hausdorff dimension bound of $[0,1] \setminus\mathcal{I}$,  we notice that
$$
(0,1) \setminus\mathcal{I} = \bigcap_{n \geq 0} \mathcal{B}_n = \limsup_n  \mathcal{B}_n .
$$
Since each $\mathcal{B}_n  $ is covered by at most $\tau_n^{-\ep}$ many balls of radius $5 \tau_n $, we have
$$
d_{\mathcal{H}} (\limsup_n  \mathcal{B}_n  ) \leq \ep .
$$

\end{proof}

\section{Concentrating the stress error}\label{sec:proof_step_1}

The goal of this section is to prove Proposition \ref{prop:main_step_1} below. The idea is that given a solution $(u,R)$ of \eqref{eq:NSR}, we can add a small correction term to the existing solution $(u,R)$   so that  all of the stress error $R$ concentrates to a set $I$, the union of small intervals of length $\tau$, and thus obtain a new solution $( \overline{u}, \overline{R})$. The key is that the procedure
$$ 
( u  , R ) \rightarrow  ( \overline{u}  ,\overline{R}  )
$$ 
leaves the size of the stress error $R$ invariant $L^1$ in time, up to a cost of a constant multiple, namely
\begin{equation}\label{eq:R_concentration_loss}
\| \overline{R} \|_{L^1_t L^r} \leq C  \|  R  \|_{L^1_t L^r} \quad \text{for any $1 < r < \infty$},   
\end{equation}
where $C=C(r,\ep,d) $ is a universal constant that only depends on $d$ and $r,\ep$ in the well-preparedness.

\begin{proposition}\label{prop:main_step_1}
Let $ 0 < \ep <1$ and  $(u,R)$ be a well-prepared smooth solution of \eqref{eq:NSR} for some set and a length scale $\widetilde{I}$ and $ \widetilde{\tau} >0$. For any $ 1 < r <\infty$, there exists a universal constant $C=C(r,\ep,d)>0$ such that the following holds.

For any $\delta>0$, there exists another well-prepared smooth solution $(\overline{u},\overline{R})$  of \eqref{eq:NSR} some set $I \subset \widetilde{I}$ with $0 ,1 \not \in I$ and $\tau  <\widetilde{\tau}/2$  satisfying the following. 
\begin{enumerate}
\item The new stress error $\overline{R}$ satisfies
$$
\overline{R}(t,x) = 0  \quad \text{if }   \,\,\dist(t,I^c) \leq \frac{3\tau}{2} ,  
$$
and
$$
\|\overline{R}  \|_{L^1(0,1; L^r( \TT^d  ))}   \leq C \|R  \|_{L^1(0,1; L^r( \TT^d  ))}  
;
$$

\item  The velocity perturbation  $ \overline{w} : = \overline{u}  - u $ satisfies
$$
\Supp \overline{w}  \subset\widetilde{I} \times \TT^d,
$$
and
$$
\|\overline{w}  \|_{L^{\infty}(0,1;   H^d (\TT^d))} \leq    \delta.
$$

\end{enumerate}

\end{proposition}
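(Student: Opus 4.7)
The plan is to execute the two-step temporal gluing sketched in Section \ref{sec:outline}: partition $[0,1]$ into sub-intervals of length $\tau^{\ep}$, exactly absorb the Reynolds stress on each by a corrector $v_i$, and patch the resulting local exact solutions $u + v_i$ together with sharp cutoffs in time so that only narrow transition regions carry any error.

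Concretely, I would choose nodes $0 = s_0 < s_1 < \dots < s_N = 1$ with $N \sim \tau^{-\ep}$ and uniform spacing $\sim \tau^{\ep}$, and on each $I_i := [s_i, s_{i+1}]$ solve the perturbed Navier--Stokes system
\begin{equation*}
\p_t v_i - \Delta v_i + \D(v_i \otimes v_i + v_i \otimes u + u \otimes v_i) + \nabla q_i = -\D R, \quad \D v_i = 0, \quad v_i(s_i) = 0,
\end{equation*}
so that $u+v_i$ is an exact smooth solution of \eqref{eq:NSE} on $I_i$. Standard $H^d$ energy estimates together with Gronwall, applied to this linear-plus-quadratic perturbation around the smooth background $u$, force $\|v_i\|_{L^\infty_t H^d(\TT^d)} \to 0$ as the interval length $\tau^{\ep} \to 0$, at a rate that depends only on $(u,R)$. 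In particular, uniqueness gives $v_i \equiv 0$ on every $I_i$ where $R \equiv 0$, so $\Supp_{t}(\sum_i v_i) \subset \widetilde{I}$ automatically.

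Next I would pick cutoffs $\chi_i \in C_c^\infty(I_i)$ with $\chi_i \equiv 1$ on $[s_i + \tau, s_{i+1} - \tau]$, $\chi_i \equiv 0$ on a $\tau/2$-neighborhood of $\{s_i, s_{i+1}\}$, and $|\chi_i'| \lesssim \tau^{-1}$. Setting $\overline{u} := u + \sum_i \chi_i v_i$, a direct computation using the equations for $u$ and for each $v_i$ yields, modulo pressure,
\begin{equation*}
\overline{R} \;=\; \Bigl(1 - \sum_i \chi_i\Bigr) R \;-\; \sum_i \chi_i(1-\chi_i)\, v_i \otimes v_i \;+\; \mathcal{R}\Bigl(\sum_i \chi_i' v_i\Bigr),
\end{equation*}
where $\mathcal{R}$ is the antidivergence operator from Appendix \ref{sec:append_tech} (applicable since each $v_i$ has zero spatial mean, with the trace part of $v_i \otimes v_i$ absorbed into the pressure). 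The first term is bounded pointwise by $|R|$, so $\|(1-\sum_i\chi_i) R\|_{L^1 L^r} \le \|R\|_{L^1 L^r}$; this is the only contribution of order $\|R\|$ and supplies the universal constant $C(r,\ep)$. The remaining two terms are supported in the $\tau$-wide transition regions at the nodes $s_i$, so using the $L^r$-boundedness of $\mathcal{R}$, the bound $|\chi_i'|\lesssim \tau^{-1}$, and the Sobolev embedding $H^d \hookrightarrow L^\infty$, each is controlled by a polynomial in $\tau^{-\ep}$ times $\|v_i\|_{L^\infty_t H^d}$ (and its square), a quantity one can drive simultaneously below $\|R\|_{L^1 L^r}$ and below $\delta$ by shrinking $\tau$, while also enforcing $\tau < \widetilde{\tau}/2$.

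Finally, I would take $I$ to be the union of the closed intervals $[s_i - 5\tau/2, s_i + 5\tau/2]$ over those nodes $s_i$ interior to $\widetilde{I}$ (omitting $s_0, s_N$): there are at most $N \le \tau^{-\ep}$ of these, $I \subset \widetilde{I}$, $\{0,1\} \cap I = \emptyset$, and the $2\tau$ inward buffer on each side forces $\overline{R} \equiv 0$ whenever $\dist(t, I^c) \le 3\tau/2$, which is the required concentration. The main technical obstacle I anticipate is isolating the constant in the $L^1 L^r$ estimate so that it depends genuinely only on $r$ and $\ep$ and not on $(u,R)$: the delicacy is that the growth $\tau^{-\ep}$ from the number of transition regions must be beaten by smallness of $v_i$, which itself depends on $(u,R)$. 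This is reconciled only because the $O(\|R\|)$ piece is isolated in the pointwise-controlled term $(1-\sum\chi_i)R$, while all gluing remainders are absorbed into the free parameter $\delta$ at the price of choosing $\tau$ as small as the data demand.
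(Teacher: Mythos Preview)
Your overall architecture matches the paper's, but there is a genuine gap in the estimate for the term $\mathcal{R}\bigl(\sum_i \chi_i' v_i\bigr)$. You propose to control $\|\mathcal{R}v_i\|_{L^r}$ by $\|v_i\|_{L^\infty_t H^d}$ via Sobolev embedding and the $L^r$-boundedness of $\mathcal{R}$. But an $H^d$ energy estimate on the $v_i$-equation only yields $\|v_i\|_{L^\infty_t H^d} \lesssim \tau^{\ep}\, C_{u,R}$, where $C_{u,R}$ depends on high norms of $R$ (the forcing $-\D R$ is of fixed size, acting over an interval of length $\tau^{\ep}$). Since $\|\chi_i'\|_{L^1} \sim 1$ and there are $\tau^{-\ep}$ intervals, the sum is of order $\tau^{-\ep}\cdot \tau^{\ep}\,C_{u,R} = C_{u,R}$, which neither vanishes as $\tau \to 0$ nor is a universal multiple of $\|R\|_{L^1 L^r}$. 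Your proposed reconciliation in the last paragraph therefore fails: this gluing remainder cannot be absorbed into $\delta$ by shrinking $\tau$.

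The paper's remedy is to estimate $z_i := \mathcal{R}v_i$ directly in $L^r$ from its own evolution equation. Applying $\mathcal{R}\mathbb{P}$ to the $v_i$-equation turns the forcing into $-\mathcal{R}\mathbb{P}\D R$, and since $\mathcal{R}\mathbb{P}\D$ is Calder\'on--Zygmund bounded on $L^r$ for $1<r<\infty$, one obtains
\[
\|\mathcal{R}v_i\|_{L^\infty([t_i,t_{i+1}];\,L^r)} \;\le\; C_r \int_{t_i}^{t_{i+1}} \|R(t)\|_{L^r}\,dt \;+\; (\text{lower order in }\tau),
\]
with $C_r$ universal. Summing over $i$ then telescopes to $C_r\|R\|_{L^1 L^r}$ plus a negligible remainder. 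In the paper's accounting it is precisely the $\chi_i'\,\mathcal{R}v_i$ term that carries the universal-constant contribution, while $(1-\sum_i\chi_i)R$ is instead made \emph{small} via $\|1-\sum_i\chi_i\|_{L^1} \lesssim \tau^{1-\ep}$ against $\|R\|_{L^\infty_t L^r}$ --- the opposite of your allocation.
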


Note the slightly stricter bound $\dist(t,I^c) \leq \frac{3\tau}{2} $  versus the definition of well-preparedness is to leave room for the future convex integration scheme in the next section.

\subsection{Subdividing the time interval}
We first introduce a subdivision of the time interval $[0,1]$. Then on each sub-interval $[t_i, t_{i+1}]$, we solve a generalized Navier-Stokes (or Euler in the inviscid case) equations and obtain a solution $v_i$ so that $u+v_i$ is a exact solution of the Navier-Stokes equations on $[t_i, t_{i+1}]$.

Let $\tau >0$ be a small length scale to be fixed in the end of this section and define for $ 0 \leq i \leq \lfloor \tau^{-\ep} \rfloor $ 
$$
t_i = i \tau^{\ep}.
$$
Without loss of generality, we assume $\tau^{-\ep}$ is always an integer so that the time interval $[0,1]$ is perfectly divided.

For $ 0 \leq i \leq   \tau^{-\ep} -1  $,  let $v_i : [t_i, t_{i+1}] \times \TT^d \to \RR^d$ and $q_i : [t_i, t_{i+1}] \times \TT^d \to \RR $ be the solution of the following generalized Navier-Stokes system
\begin{equation}\label{eq:concentrator}
\begin{cases}
\p_t v_{i} - \Delta v_{i } + \D( v_{i} \otimes v_{i}) + \D(v_{i} \otimes u  ) + \D( u \otimes v_{i}   ) + \nabla q_i=  -\D R &\\
\D v_i = 0 &\\
v_i(t_i) = 0.
\end{cases}
\end{equation}

Since the initial data for $v_i$ is zero and $u$ and $R$ are smooth on $[0,1]\times \TT^d$, thanks to the general local wellposedness theory of the Navier-Stoke equations (or the Euler equations in the inviscid case), for all sufficiently small $\tau>0$, we may solve equation \eqref{eq:concentrator}  on intervals $t \in [t_i ,t_{i+1}]$ to obtain a unique smooth solution $v_i$.

We shall focus on estimating each $v_i$ on the associated interval $[t_i, t_{i+1}]$. The solution $v_i$ serves as an ``accumulator'' of the stress error on $[t_i, t_{i+1}]$, and it will provide the major contribution to the new stress error $\overline{R}$ once we use a gluing procedure.

Recall that $\mathcal{R} : C^\infty(\TT^d ,\RR^d) \to C^\infty(\TT^d, \mathcal{S}^{d \times d }_0)$ is an inverse divergence operator on $\TT^d$ defined in Appendix \ref{sec:append_tech}. The below result quantifies the size of the corrector $v_i$ in relation to the time scale $\tau$ and the forcing $-\D R$.
\begin{proposition} \label{estimates_on_vi}
Let  $d\geq 2$ and  $(u,R)$ be a  smooth solution of \eqref{eq:NSR}. There exists a universal constant $C_r>0$ depending on $1< r <\infty$ and $d$ so that the following holds. 

For any $\delta>0$, if $\tau>0$ is sufficiently small, then the unique smooth solutions $v_i$ to \eqref{eq:concentrator} on $[t_i, t_{i+1}]$ satisfies
$$
\| v_i \|_{L^\infty (   t_i , t_{i+1} ;  H^d(\TT^d)) } \leq  \delta,
$$
and
$$
\| \mathcal{R} v_i \|_{L^\infty(   t_i , t_{i+1} ;  L^r(\TT^d))   } \leq C_r  \int_{ t_i}^{t_{i+1} } \| R (t )\|_{ L^r }  \, dt  + C_u \delta \tau^{\ep}  ,
$$
where $C_u$ is a sufficiently large constant depending on $u$ but not on $\delta$ or $\tau$.
\end{proposition}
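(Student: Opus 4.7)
The system \eqref{eq:concentrator} is a linear perturbation of Navier--Stokes around the given smooth field $u$, forced by the smooth source $-\D R$, with trivial initial data. I would first invoke standard local well-posedness theory in $H^d(\TT^d)$ (e.g.\ Galerkin plus the Moser/Kato--Ponce calculus) to obtain a unique smooth solution $v_i$ on $[t_i,t_{i+1}]$, provided $\tau^\epsilon$ is small enough relative to $\|u\|_{C^0_t H^{d+1}}$ and $\|R\|_{C^0_t H^d}$. Both estimates in the proposition are then consequences of short-time energy and duality computations; the two terms on the right-hand side of the second bound correspond, respectively, to the unavoidable contribution of $\mathcal{R}\D R$ and to ``lower order'' contributions from $v_i$ itself.

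\textbf{The $H^d$ estimate.} Test the equation with $(-\Delta)^d v_i$, or equivalently differentiate by $\partial^\alpha$ for $|\alpha|\leq d$ and sum. The pressure term $\nabla q_i$ drops by $\D v_i = 0$, the Laplacian contributes the nonpositive dissipation $-\|v_i\|_{H^{d+1}}^2$, the self-interaction $\D(v_i\otimes v_i)$ gives at worst $C\|v_i\|_{H^d}^3$ via Moser estimates, the two drift terms with $u$ give $C(\|u\|_{C^0_t H^{d+1}})\|v_i\|_{H^d}^2$, and the source $\D R$ is absorbed into the dissipation at the cost of $C\|R\|_{H^d}^2$. Setting $E(t)=\|v_i(t)\|_{H^d}^2$, this yields a differential inequality of the form
$$E'(t) \leq C_{u,R}\bigl(1 + E(t)^{1/2}\bigr) E(t) + C\|R(t)\|_{H^d}^2.$$
Since $E(t_i)=0$ and $|t-t_i|\le \tau^\epsilon$, a standard continuity/Gr\"onwall argument produces $E(t)\leq C(u,R)\,\tau^\epsilon$ on $[t_i,t_{i+1}]$, which is $\le \delta^2$ for all sufficiently small $\tau$.

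\textbf{The $\mathcal{R}v_i$ estimate.} Since $v_i(t_i)=0$, integrating \eqref{eq:concentrator} in time gives
$$v_i(t) = \int_{t_i}^{t}\bigl[\Delta v_i - \D(v_i\otimes v_i) - \D(v_i\otimes u + u\otimes v_i) - \nabla q_i - \D R\bigr](s)\,ds,$$
where $q_i$ is determined by the elliptic equation $-\Delta q_i = \D\D(v_i\otimes v_i + v_i\otimes u + u\otimes v_i + R)$. Apply $\mathcal{R}$ and take $L^r$ norms, using from Appendix \ref{sec:append_tech} that $\mathcal{R}\D$ and the operator $q\mapsto \mathcal{R}\nabla q$ are zeroth-order Calder\'on--Zygmund operators, bounded on $L^r(\TT^d)$ for $1<r<\infty$, while $\mathcal{R}\Delta$ is first-order. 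The Step 2 bound $\|v_i\|_{H^d}\le \delta$ together with the Sobolev embeddings $H^d(\TT^d)\hookrightarrow L^\infty$ and $H^{d-1}(\TT^d)\hookrightarrow L^r$ then gives, pointwise in time,
$$\|\mathcal{R}\Delta v_i\|_{L^r} + \|\mathcal{R}\D(v_i\otimes v_i)\|_{L^r} + \|\mathcal{R}\D(v_i\otimes u + u\otimes v_i)\|_{L^r} + \|\mathcal{R}\nabla q_i\|_{L^r} \leq C_u\,\delta,$$
whereas $\|\mathcal{R}\D R(s)\|_{L^r}\le C_r\|R(s)\|_{L^r}$. Integrating over $[t_i,t]\subset[t_i,t_{i+1}]$ produces the stated bound, the $C_r\int\|R\|_{L^r}\,dt$ piece coming from the source and the $C_u\delta\tau^\epsilon$ piece from the remaining five terms.

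\textbf{Main obstacle.} There is nothing deep here; the technical care lies in (i) verifying that the continuity argument for the $H^d$ energy inequality closes on an interval of length $\tau^\epsilon$ uniformly in $i$, which reduces to tracking only the smooth bounds on $u$ and $R$ over $[0,1]$, and (ii) cleanly handling the pressure term by way of the elliptic representation of $q_i$, so that $\|\mathcal{R}\nabla q_i\|_{L^r}$ is controlled by $\|v_i\|_{H^d}^2 + \|u\|_{L^\infty}\|v_i\|_{L^r} + \|R\|_{L^r}$ through the Calder\'on--Zygmund bounds from Appendix \ref{sec:append_tech}. The key structural point is that the $\mathcal{R}\D R$ contribution is the only one that is $O(1)$ rather than $O(\delta)$ or $O(\tau^\epsilon)$, which is exactly why it remains as the leading term in the final estimate and drives the invariance \eqref{eq:R_concentration_loss} of the $L^1 L^r$ size of the stress error under the concentration step.
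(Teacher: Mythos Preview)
Your approach is correct and reaches the same conclusion, but the second estimate is handled somewhat differently than in the paper. The paper first applies the Leray projection $\mathbb{P}$ to \eqref{eq:concentrator} to eliminate the pressure in one stroke, then applies $\mathcal{R}$ to obtain a genuine heat equation $\partial_t z - \Delta z = F$ for $z=\mathcal{R}v_i$, with $F = -\mathcal{R}\mathbb{P}\D(v_i\otimes v_i + u\otimes v_i + v_i\otimes u) - \mathcal{R}\mathbb{P}\D R$. An $L^r$ energy estimate then drops the Laplacian by sign and yields $\|z(t)\|_r \le C_r\int_{t_i}^t \|F(s)\|_r\,ds$ directly, using only that $\mathcal{R}\mathbb{P}\D$ is Calder\'on--Zygmund. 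Your route instead integrates the equation in time before projecting, which forces you to estimate $\mathcal{R}\Delta v_i$ and $\mathcal{R}\nabla q_i$ separately; this works, but note a small bookkeeping slip in your displayed inequality: since $-\Delta q_i = \D\D(v_i\otimes v_i + v_i\otimes u + u\otimes v_i + R)$, the term $\|\mathcal{R}\nabla q_i\|_{L^r}$ carries a $C_r\|R\|_{L^r}$ contribution and cannot be bounded by $C_u\delta$ alone (you acknowledge this in the ``Main obstacle'' paragraph, and it only changes $C_r$ by a factor). The paper's version is tidier precisely because the projection and the heat structure make both of these extra terms disappear; your version is more elementary but arrives at the same bound.
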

\begin{proof}
The first estimate follows directly from the fact that $\|u\|_{H^d}$ is uniformly bounded on $[0,1]$ and standard energy bounds for $v_i$ by a continuity argument. Assuming $\delta>0$ is sufficiently small, we prove the second one as follows.

To reduce notations, we simply write $v$ for $v_i$ and denote $z : = \mathcal{R} v   $. Note that \eqref{eq:concentrator} preserves the zero-mean condition.  

Denote by $\mathbb{P}$ the Leray projection onto the divergence-free vector fields on $\TT^d$. Once the pressure is eliminated by projecting \eqref{eq:concentrator}, the evolution of $z$ is governed by
\begin{align*}
\p_t z -\Delta z = F,
\end{align*}
where
\begin{align*}
F =  - \mathcal{R} \mathbb{P}\D ( v\otimes v + u \otimes v + v\otimes u )  - \mathcal{R} \mathbb{P} \D  R .
\end{align*} 

Since $1 < r <\infty$ and $\mathcal{R}\mathbb{P}\D$ is a Calder\'on-Zygmund operator on $L^r(\TT^d)  $, a standard energy method yields
\begin{align*}
\|z(t) \|_{L^r(\TT^d)} \leq C_r \int_{t_i }^{t}\left(\| R (s)\|_r  + \|v\otimes v \|_r + \|u\otimes v \|_r + \|v\otimes u \|_r\right)\, ds \quad \text{for all $t \in [t_i , t_{i+1}]$,}
\end{align*}
where the constant $  C_r$ depends only on $r>1$ and dimension $d$. 
Using the obtained estimate on $v$ and the embedding $H^d(\TT^d) \subset L^\infty(\TT^d)$, we get
\begin{align*}
\|z \|_{L^\infty(   t_i , t_{i+1} ;  L^r(\TT^d))   }  & \leq C_r  \int_{t_i }^{t_{i+1}}\left(\| R (t)\|_r    + \|v  \|_{L^\infty(   t_i , t_{i+1} ;  H^d(\TT^d))   }^2  +  \| u \|_{L^\infty_{t,x }} \|v  \|_{L^\infty(   t_i , t_{i+1} ;  L^r(\TT^d))   }\right) \, dt 
\end{align*}
Since $ t_{t+1} - t_i = \tau^{\ep}$, it follows that
\begin{align*}
\|z \|_{L^\infty(   t_i , t_{i+1} ;  L^r(\TT^d))   }  &\leq  C_r \int_{t_i }^{t_{i+1}}\| R (t)\|_r    \, dt  + C_r \tau^{\ep} \delta(  \delta + \|u \|_{L^\infty_{t,x}} ).
\end{align*}

\end{proof}

\subsection{Temporal concentration by sharp gluing}
Since $u+v_i$ is an exact solution of the Navier-Stokes equations on each interval $[t_{i} , t_{i+1}]$,  $0 \leq i\leq \tau^{-\ep} - 1$, the next step is to suitably glue each $v_i$ together so that the glued solution $ u+ \sum \chi_i v_i$ is still an exact solution on a majority of the time interval $[0,1]$, with an error supported on many small disjoint sub-intervals.

We first choose cutoff functions that will be used to glue together $v_i$. We define $\chi_i \in C^\infty_c(\RR) $ be a smooth cutoff such that when $1  \leq i \leq \tau^{-\ep} -2 $, 
\begin{equation} \label{def:chi_i_1}
\chi_i = 
\begin{cases}
1 & \text{if  $t_i +   \tau   \leq t \leq t_{i+1} - \tau $ }\\
0& \text{if  $t_i + \tau/2 \geq t$ or $ t \geq t_{i+1} - \tau/2 $, }
\end{cases}
\end{equation}
and when $ i= 0 $, 
\begin{equation} \label{def:chi_i_2}
\chi_i = 
\begin{cases}
1 & \text{if  $0   \leq t \leq t_{i+1} - \tau $ }\\
0& \text{if  $ t \geq t_{i+1} - \tau/2 $,}
\end{cases}
\end{equation}
and when $ i= \tau^{-\ep}-1 $,
\begin{equation} \label{def:chi_i_3}
\chi_i = 
\begin{cases}
1 & \text{if  $t_i + \tau   \leq t \leq 1$ }\\
0& \text{if  $ t \leq t_{i} + \tau/2 $.}
\end{cases}
\end{equation}
In other words, we do not cut near the endpoints $t = 0 $ and $t=1$, and the glued solution $\overline{u}$ is an exact solution for a short time near $t=0$ and $t=1$. It is worth noting that in the iteration scheme $v_i$ for $i=0$ or $i=\tau^{-\ep}-1$ will be zero after the first step (since it is already an exact solution of \eqref{eq:NSE} there), so \eqref{def:chi_i_2} and \eqref{def:chi_i_3} are only used once. 

Furthermore, we require that the bounds
\begin{equation*}
|\nabla^m \chi_i | \lesssim_m \tau^{-m},
\end{equation*}
hold uniformly in $\tau$ and $i$.

Note that for sub-intervals $[t_i,t_{i+1}]$, $1 \leq i \leq \tau^{-\ep}-2 $, we cut near both the left and the right point of $[t_i , t_{i+1}]$. The left cutoff is to ensure smoothness near $t_i$ since each $v_i$ only has a limited amount of time regularity at $t=t_i$ whereas the right cutoff is where the intended gluing takes place.
With $\chi_i$ in hand, we can simply define the glued solution as
$$
\overline{u} : = u +   \sum_{i} \chi_i v_i =u + \overline{w}.
$$
It is clear that $\overline{u} : [0,1] \times \TT^d \to \RR^d$ has zero spatial mean and is also divergence-free. It remains to show that $\overline{u}$ satisfies the properties listed in Proposition \ref{prop:main_step_1}.

Heuristically, $\overline{u}$ should be an exact solution with a stress error supported on smaller intervals of size $ \tau$. To confirm this claim, we must compute the stress error $\overline{R}$ associated with $\overline{u}$. Since supports of $\chi_i $ are disjoint, we can compute
\begin{align*}
\p_t \overline{u} - \Delta \overline{u} + \D(\overline{u} \otimes \overline{u}) + \nabla p &= \D R + ( \p_t  -\Delta )\sum_{i} \chi_i v_i \\
& \quad + \sum_{i} \chi_i \D(u  \otimes  v_i  ) + \sum_{i} \chi_i \D( v_i  \otimes  u  ) \\
& \qquad + \sum_{i} \chi_i^2    \D(v_i \otimes  v_i  ).
\end{align*}
Thus, using the fact that $v_i$ solves \eqref{eq:concentrator} on $[t_i,t_{i+1}]$ and $u$ solves \eqref{eq:NSR} on $[0,1]$, we have
\begin{align*}
\p_t \overline{u} - \Delta \overline{u} + \D(\overline{u} \otimes \overline{u} ) + \nabla p &= \D R + \sum_{i}  \p_t  \chi_i  v_i  + \sum_{i} (\chi_i^2 -\chi_i )    \D(v_i \otimes  v_i  ) \\
& \qquad +  \sum_{i} \chi_i  \big( \p_t  v_i -\Delta  v_i +\D(v_i \otimes  v_i  )+  \D(u  \otimes  v_i  ) + \D(v_i  \otimes  u   ) \big) \\
& =  (1 - \sum_i \chi_i ) \D R + \sum_{i}  \p_t  \chi_i  v_i  + \sum_{i} (\chi_i^2 -\chi_i )    \D(v_i \otimes  v_i  ) - \sum_i  \chi_i \nabla q_i. 
\end{align*}

Now let 
\begin{equation}\label{eq:def_concentrated_stressR}
\overline{R} : = (1 - \sum_i \chi_i )   R + \mathcal{R} \sum_{i}  \p_t  \chi_i  v_i  +   \sum_{i} (\chi_i^2 -\chi_i )     v_i \mathring{\otimes}  v_i,
\end{equation}
where $ \mathring{\otimes}$ denotes a traceless tensor product, i.e. $f \mathring{\otimes} g = f_i    g_j  -  \frac{1}{d}\delta_{ij}  f_k g_k     $. Since each $v_i$ has zero spacial mean, $\D \mathcal{R}  v_i = v_i$, and we can conclude that
$$
\p_t \overline{u} - \Delta \overline{u} + \D(\overline{u} \otimes \overline{u} )  +\nabla \overline{p} = \D\overline{R},
$$
where the pressure $\overline{p} : [0,1]\times \TT^d \to \RR$ is defined by
$$
\overline{p} = p +\sum_{i} \chi_i q_i - \sum_i( \chi_i^2 -\chi_i) \frac{|v_i|^2}{d  } .
$$

The last step is then to show that the new Reynolds stress $\overline{R}$ is comparable to the original one when measured in $L^1_t$. It is clear that $\overline{R}$ is much more ``turbulent'' than the original $R$ as its value changes much more drastically due to the sharp cutoff $\chi$ near the endpoints of each interval $[t_i , t_{i+1} ]$.

The heuristic is that if $\tau$ is small enough, then $v_i$ behaves linearly with a rate $\sim \D R$, and thus gluing together $v_i$ only counts the input from the stress forcing $\D R$. More precisely, the leading order term in \eqref{eq:def_concentrated_stressR} is the second term, where $\mathcal{R} v_i$ is proportional to $ R $ thanks to Proposition \ref{estimates_on_vi}.
\begin{proposition}
For any $1< r< \infty$, there exists a universal constant $C_r $ depending on $ r,\ep$ and $d$ such that for all sufficiently small $\tau  > 0$, the glued solution $( \overline{u} ,  \overline{R})$ satisfies
$$
\|  \overline{R} \|_{L^1_t  L^r} \leq C_r \|  R  \|_{L^1_t  L^r} .
$$
\end{proposition}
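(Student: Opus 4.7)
The plan is to use the explicit decomposition
\[
\overline{R} = \underbrace{\Big(1 - \sum_i \chi_i\Big) R}_{\overline{R}_{\Lin}} + \underbrace{\mathcal{R}\sum_i (\p_t\chi_i)\, v_i}_{\overline{R}_{\Cor}} + \underbrace{\sum_i (\chi_i^2 - \chi_i)\, v_i \mathring{\otimes} v_i}_{\overline{R}_{\Qua}}
\]
from \eqref{eq:def_concentrated_stressR} and estimate each piece separately in $L^1_tL^r$. The linear piece is trivial: since $0\leq \chi_i\leq 1$ and the $\chi_i$ have disjoint supports, $|1-\sum_i\chi_i|\leq 1$, so $\|\overline{R}_{\Lin}\|_{L^1_tL^r}\leq \|R\|_{L^1_tL^r}$.

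The correction piece is where the bound is essentially saturated, and this is where Proposition~\ref{estimates_on_vi} is used in a critical way. Because the supports of the $\chi_i$ are disjoint,
\[
\|\overline{R}_{\Cor}\|_{L^1_t L^r} \leq \sum_i \int_{t_i}^{t_{i+1}} |\p_t\chi_i(t)|\,\|\mathcal{R} v_i(t)\|_{L^r}\,dt \leq \sum_i \|\mathcal{R} v_i\|_{L^\infty_t L^r} \int_{t_i}^{t_{i+1}}|\p_t\chi_i|\,dt.
\]
The key observation is that although $|\p_t\chi_i|\lesssim \tau^{-1}$ blows up, its total variation $\int|\p_t\chi_i|\,dt$ is bounded by an absolute constant (since $\chi_i$ takes values in $[0,1]$ with a bounded number of monotone pieces). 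Inserting the bound of Proposition~\ref{estimates_on_vi} and summing over the $\tau^{-\ep}$ indices $i$ yields
\[
\|\overline{R}_{\Cor}\|_{L^1_t L^r} \lesssim_{\ep}\, C_r\|R\|_{L^1_tL^r} + \tau^{-\ep}\cdot C_u\delta\tau^{\ep} = C_r\|R\|_{L^1_tL^r} + C_u\delta,
\]
so the factors of $\tau^{\pm\ep}$ telescope exactly.

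For the quadratic piece, the support of $\chi_i^2-\chi_i$ is contained in the transition region where $\chi_i\in(0,1)$, which on each sub-interval has measure $\lesssim \tau$. Combining $|\chi_i^2-\chi_i|\leq 1$ with the bound $\|v_i\|_{L^\infty_t H^d}\leq\delta$ from Proposition~\ref{estimates_on_vi} and the embedding $H^d(\TT^d)\hookrightarrow L^\infty\hookrightarrow L^{2r}$, one obtains
\[
\|\overline{R}_{\Qua}\|_{L^1_tL^r}\lesssim \tau^{-\ep}\cdot \tau\cdot \delta^2 = \tau^{1-\ep}\delta^2,
\]
which is negligible for small $\tau$ since $\ep<1$. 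To finish, we first select $\delta$ small compared to $\|R\|_{L^1_tL^r}/C_u$ (this is legitimate since $\delta$ is a free parameter inherited from Proposition~\ref{estimates_on_vi}), then choose $\tau$ small enough that both Proposition~\ref{estimates_on_vi} applies and $\tau^{1-\ep}\delta^2 \leq \|R\|_{L^1_tL^r}$; these absorbable errors combine with the linear and correction bounds into a single constant $C_r=C(r,\ep)$.

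The main potential obstacle is the correction term, and it resolves itself cleanly: the bound in Proposition~\ref{estimates_on_vi} is already \emph{time-integrated} on $[t_i,t_{i+1}]$, which is exactly the scaling needed to cancel against the $O(1)$ total variation of the sharp cutoff $\chi_i$. This balance is what makes the concentration procedure ``free'' in $L^1_t L^r$ uniformly in $\tau$, giving the invariance \eqref{eq:R_concentration_loss} that is crucial for the iterative scheme. The role of the Calderón–Zygmund hypothesis $r>1$ is hidden in Proposition~\ref{estimates_on_vi} (through the boundedness of $\mathcal{R}\mathbb{P}\D$ on $L^r$), which is why the statement fails at the endpoints.
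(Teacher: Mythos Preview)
Your proposal is correct and follows essentially the same route as the paper: the same three-term decomposition, the same use of Proposition~\ref{estimates_on_vi} for the correction term via the $O(1)$ total variation of $\chi_i$, and the same support-measure bound for the quadratic term. One minor improvement worth noting: for the linear piece you use the pointwise bound $|1-\sum_i\chi_i|\leq 1$ directly, whereas the paper applies H\"older in time to get $\|1-\sum_i\chi_i\|_{L^1_t}\|R\|_{L^\infty_t L^r}\lesssim \tau^{1-\ep}\|R\|_{L^\infty_t L^r}$ and then needs an extra smallness condition on $\tau$ to absorb this into $\|R\|_{L^1_tL^r}$; your treatment is cleaner and avoids invoking the $L^\infty_t$ norm of $R$ altogether.
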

\begin{proof}
By the triangle inequality, we need to estimate
$$
\|  \overline{R} \|_{L^1_t  L^r} \leq  \Big\|  (1 - \sum_i \chi_i )   R \Big\|_{L^1_t  L^r}  + \sum_{i}  \big\|   \p_t  \chi_i \mathcal{R} v_i \big\|_{L^1_t  L^r}  +  \Big\|  \sum_{i} (\chi_i -\chi_i^2 )     v_i \mathring{\otimes}  v_i     \Big\|_{L^1_t  L^r}.
$$
The idea is to treat the first and the last terms as small errors.  By H\"older's inequality, we get
$$
\|  \overline{R} \|_{L^1_t  L^r} \leq  \Big\|  1 - \sum_i \chi_i  \Big\|_{L^1([0,1])}  \| R \|_{L^\infty_t  L^r}  +   \sum_{i}  \|   \p_t  \chi_i \|_{L^1([0,1])}  \| \mathcal{R} v_i \|_{L^\infty_t  L^r}  +  \sum_{i} \|  (\chi_i -\chi_i^2 ) \|_{L^1([0,1])}   \|  v_i \mathring{\otimes}  v_i     \|_{L^\infty_t  L^r}.
$$
By the definition of the cutoff $\chi_i$, we have the following trivial bounds (with implicit constants depending on $\ep$) in time:
\begin{align*}
\Big\|  1 - \sum_i \chi_i  \Big\|_{L^1([0,1])} & \lesssim  \tau^{1 -\ep},  
\end{align*}
and for any $0 \leq  i \leq \tau^{-\ep}-1$,
\begin{align*}
\|   \p_t  \chi_i \|_{L^1([0,1])}  & \lesssim  1,   \\  
\|  \chi_i -\chi_i^2  \|_{L^1([0,1])}  & \lesssim  \tau. 
\end{align*}
In addition, it follows from the bounds for $v_i$ in Proposition~\ref{estimates_on_vi} that
\begin{align*}
\|  v_i \mathring{\otimes}  v_i     \|_{L^\infty_t  L^r} \lesssim \|  v_i   \|_{L^\infty_t  H^d}^2 \leq \delta^2.
\end{align*}

Combining these together and using the bound on $\mathcal{R} v_i$ from Proposition~\ref{estimates_on_vi},  we get
\begin{align*}
\|  \overline{R} \|_{L^1_t  L^r} & \lesssim  \tau^{1-\ep}  \| R \|_{L^\infty_t  L^r}  +   \sum_{i} \left(  \int_{t_i}^{t_{i+1}} \| R(t)\|_{  L^r} \, dt  +C_u \delta  \tau^{\ep}  \right)+  \delta^2 \sum_{i} \tau     \\
& \lesssim     \tau^{1-\ep}  \| R \|_{L^\infty_t  L^r}  + \| R \|_{L^1_t  L^r}  + C_u \delta, 
\end{align*}
where we have used the fact that $\sum_i 1 \leq \tau^{-\ep} \leq \tau^{-1}$.

The conclusion follows once we choose $\tau>0$ sufficiently small such that 
$$
 C_u \delta  \leq \| R \|_{L^1_t  L^r} \quad \text{and} \quad \tau^{1-\ep}  \| R \|_{L^\infty_t  L^r}  \leq \| R \|_{L^1_t  L^r}.
$$
\end{proof}

\subsection{Proof of Proposition \ref{prop:main_step_1}}

We conclude this section with the last step in the proof of Proposition \ref{prop:main_step_1}. Since all the estimates have been obtained, we only need to verify that the temporal support of $\overline{w} = \sum_i \chi_i v_i$ is contained in $\tilde{I}$ and show the well-preparedness of $( \overline{u} , \overline{R})$.

Note that $(u,R)$ is well-prepared for $\tilde{I}$ and $\tilde{\tau}$, and it follows from \eqref{eq:concentrator} that
$$
v_i \equiv 0 \quad \text{for $0 \leq i \leq \tau^{-\ep}-1$ such that $R\equiv 0$ on $[t_i, t_{i+1}]$}.
$$
Hence, if $\tau^{\ep}= |[t_i, t_{i+1}]| $ is sufficiently smaller than $  \tilde{\tau}$, the definition of well-preparedness of $(u,R)$ implies that
$$
\bigcup_i \Supp_t \chi_i  v_i \subset \tilde{I}.
$$
Thus we have proved that $ \Supp_t  \overline{w} \subset \tilde{I} $ .

Let us now show the well-preparedness of $( \overline{u} , \overline{R})$. Define an index set 
$$
E = \{ i\in \ZZ: 1   \leq i \leq    \tau^{-\ep} -1 \,\, \text{and} \,\, v_i \not \equiv 0 \} 
$$ satisfying a trivial estimate
\[
|  E  | \leq\tau^{-\ep}.
\]
The idea is that the concentrated stress $\overline{R}$ is supported around each $t_i$, for $i\in E$. Therefore, we can define a set on the time axis
\[
{I} : = \bigcup_{i \in E } \Big[t_i -  \frac{5\tau}{2} ,t_i+ \frac{5\tau}{2} \Big]  ,
\]
where as before $t_i = i \tau^{\ep}$. Note that each interval in $I$ has length $ 5\tau$ and the total number of intervals is at most $  \tau^{-\ep}$, consistent with the well-preparedness, and $0,1 \not \in I$ due to \eqref{def:chi_i_2}

Now take any $t \in [0,1]$ such that $\dist(t, I^c) \leq  \frac{3\tau}{2} $. Then by \eqref{def:chi_i_1} and  \eqref{def:chi_i_2},
\[
\sum_{i  }\chi_i(t) = 1.
\]
Moreover, $\p_t  \chi_i(t)=0$ and $\chi_i(t) \in\{0, 1\}$ for any $i$.
Consequently, 
\[
\overline{R} (t)  = (1 - \sum_i \chi_i )   R + \mathcal{R} \sum_{i}  \p_t  \chi_i  v_i  + \sum_{i} (\chi_i^2 -\chi_i )     v_i \mathring{\otimes}  v_i    =0,
\]
for every $t$ such that $\dist(t, I^c) \leq \frac{3\tau}{2} $. In particular, $(\overline{u},\overline{R})$ is also well-prepared, which concludes the proof.

\section{Convex integration in space-time}\label{sec:proof_step_2_convex_integration}

In this section, we will use a convex integration scheme to reduce the size of the Reynolds stress. The goal is to design a suitable velocity perturbation $w$ to the glued solution $( \overline{u}, \overline{R})$ so that 
$$
u_1 : =\overline{u} + w
$$
solves the equation \eqref{eq:NSR} with a much smaller Reynolds stress $ R_1$.

The main goal of the current and the following section is summarized in the following proposition.
\begin{proposition}\label{prop:main_2}
There exists a universal constant $M >0$ such that for any $p < 2$ and $q<\infty$, there exists $r>1$ depending only on $p,q$ and $d$ such that the following holds.

Let $\delta>0$ and  $(\overline{u},\overline{R})$ be a well-prepared smooth solution of \eqref{eq:NSR} given by Proposition \ref{prop:main_step_1} for the set $ I  $ and time scale $\tau $. Then there exists another well-prepared smooth solution $(u_1,R_{1})$  of \eqref{eq:NSR} for the same set $I    $ and time scale $\tau  $ such that 
$$
\|R_{1}  \|_{L^1(0,1; L^r( \TT^d  ))} \leq \delta .
$$
Moreover, the velocity perturbation  $w : = u_{1}  - \overline{u} $ satisfies
\begin{enumerate}
\item  The temporal support of $w$ is contained in $I$, i.e.
$$
\Supp w \subset I \times \TT^d;
$$

\item The $L^2_{t,x} $ estimate,
$$
 \|w \|_{L^2([0,1] \times \TT^d )} \leq M  \| \overline{R} \|_{L^1 ([0,1] \times \TT^d )} ;
$$

\item The $L^{p}_t L^\infty   \cap L^1_t W^{1,q}$  estimate,
$$
\|w \|_{L^{p }(0,1;   L^\infty (\TT^d))} + \|w \|_{L^{1 }(0,1;   W^{1,q} (\TT^d))} \leq    \delta .
$$

\end{enumerate} 

\end{proposition}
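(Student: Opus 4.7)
I will prove Proposition \ref{prop:main_2} by executing a single-step convex integration that builds the perturbation $w$ out of stationary Mikado flows in space, intermittent time profiles, and amplitudes derived from $\overline{R}$ via the geometric (decomposition of symmetric matrices) lemma. First, fix a small mollification scale $\ell$ and replace the pair $(\overline{u},\overline{R})$ by a mollified pair $(\overline{u}_\ell, \overline{R}_\ell)$; this regularization is well-preparedness-preserving (choose $\ell \ll \tau$) and introduces only a commutator error of order $\ell^\alpha$ in $L^1 L^r$ which we absorb in $\delta$. Next choose four free parameters --- a spatial oscillation frequency $\lambda$, a temporal oscillation frequency $\kappa$, a spatial concentration parameter $\mu$ (which compresses each Mikado pipe to radius $\lambda^{-1}\mu^{-1}$ so the effective spatial intermittency dimension is $D = d - (\text{small})$), and the mollification scale $\ell$ --- governed by the hierarchy
\begin{equation*}
1 \ll \ell^{-1} \ll \lambda \ll \mu \ll \lambda^{1+\eta}, \qquad \lambda^{1+(d-D)(\tfrac12 - \tfrac1q)} \ll \kappa^{1/2} \ll \lambda^{1+(d-D)/2},
\end{equation*}
as dictated by the scaling analysis in Section~\ref{sec:outline}; any admissible choice satisfying the constraints $\eqref{eq:outline_5}$--$\eqref{eq:outline_8}$ works, and one picks $\kappa \sim \lambda^{2+(d-D)-2\eta}$ with $D$ very close to $d$ depending on $p<2$ and $q<\infty$.

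The principal perturbation has the form $w_p = \sum_{k} a_k(t,x)\, g_k(\kappa t)\, \mathbf{W}_k(\lambda x)$, where $\mathbf{W}_k$ is a Mikado flow (stationary, divergence-free, periodic, concentrated on a family of pipes of radius $\lambda^{-1}\mu^{-1}$), $g_k$ is a mean-one intermittent $\kappa^{-1}$-periodic temporal profile with $\|g_k\|_{L^s([0,1])} \sim \kappa^{1/2-1/s}$ for $s\in[1,\infty]$ and with disjoint temporal supports among different $k$, and the amplitudes $a_k$ are produced by the geometric lemma applied to $\mathrm{Id} - \overline{R}_\ell/\rho$ after renormalization by a large scalar $\rho(t,x)$ comparable to $|\overline{R}_\ell|$, cut off in time by a partition of unity subordinated to the well-prepared set $I$ so that $\mathrm{Supp}_t w_p \subset I$. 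To this add the standard divergence corrector $w_c$ (so that $w_p + w_c$ is divergence-free) and a temporal corrector $w_t$ chosen so that $\partial_t w_t$ cancels the purely time-dependent low mode of $\mathbb{P}_{\neq 0}\mathrm{div}(w_p\otimes w_p)$, as in~\cite{2004.09538}; the temporal corrector can be taken without any spatial oscillation or concentration, which is why its cost in all norms is negligible.

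The estimates then proceed as follows. The $L^2L^2$ bound $\|w\|_{L^2_{t,x}} \le M\|\overline{R}\|_{L^1_{t,x}}$ follows from $\int g_k^2 \,dt = 1$, the near-orthogonality of the Mikado directions on $\TT^d$, and the choice $\sum a_k^2 \mathbf{W}_k\otimes\mathbf{W}_k \,g_k^2 \approx \rho \,\mathrm{Id} - \overline{R}_\ell$. The $L^p L^\infty$ and $L^1 W^{1,q}$ bounds come from the scaling relations $\eqref{eq:outline_6}$--$\eqref{eq:outline_7}$: we lose $\lambda^{(d-D)/2}$ going from $L^2$ to $L^\infty$ in space but gain $\kappa^{-(1/p-1/2)}$ from the temporal intermittency, and analogously for $L^1_t W^{1,q}_x$, each of which is $\ll 1$ by the parameter hierarchy. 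For the new Reynolds stress we write
\begin{equation*}
\mathrm{div}\, R_1 = \underbrace{\partial_t w_t + \mathrm{div}(w_p\otimes w_p) - \mathrm{div}\,\overline{R}_\ell}_{\text{oscillation}} + \underbrace{\partial_t(w_p+w_c) - \Delta w + \mathrm{div}(\overline{u}_\ell\otimes w + w\otimes \overline{u}_\ell)}_{\text{linear}} + \text{corrector errors} + \text{commutator},
\end{equation*}
and invert $\mathrm{div}$ using the antidivergence of Appendix~\ref{sec:append_tech} together with the improved Hölder inequality on concentrated pipes to gain factors of $\lambda^{-1}$ and $\mu^{-(d-D)(1-1/r)}$; the choice $r>1$ sufficiently close to $1$ ensures boundedness of the Calder\'on-Zygmund operators involved and makes every term $\ll \delta$ in $L^1 L^r$. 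The well-preparedness is preserved since $a_k, g_k$ inherit the temporal support of $\overline{R}_\ell$.

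\textbf{Main obstacle.} The delicate point is the temporal stress error $\mathcal{R}\partial_t w_p$. A crude bound gives $\|\mathcal{R}\partial_t w_p\|_{L^1 L^r} \sim \kappa^{1/2}\lambda^{-1}\lambda^{-(d-D)/2}$, which must be forced $\ll 1$, while simultaneously $\|w\|_{L^1 W^{1,q}}$ demands $\kappa^{1/2}$ large enough to offset the spatial derivative. These two requirements pinch $\kappa^{1/2}$ into the narrow window $\eqref{eq:outline_8}$, and the whole scheme hinges on choosing $D$ near $d$ (lots of spatial oscillation, little spatial concentration) so that this window is nonempty for every $p<2$ and $q<\infty$. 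Verifying that the chosen $\lambda,\kappa,\mu$ simultaneously close all of (2), (3), (4) in the statement, including the commutator term from mollification, is where the calculation must be carried out carefully, and is the content of Section~\ref{sec:proof_step_2}.
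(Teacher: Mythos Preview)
Your proposal is essentially correct and follows the same convex integration strategy as the paper, but there are a few differences worth noting.

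First, the mollification step is unnecessary and absent from the paper: the input $(\overline{u},\overline{R})$ is already smooth (it comes from Proposition~\ref{prop:main_step_1}), so the amplitudes $a_k$ built directly from $\overline{R}$ are smooth with all derivatives controlled by constants depending on the previous solution. There is no commutator error.

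Second, the paper does \emph{not} give the temporal profiles $g_k$ disjoint supports across directions $k$. Instead a single intermittent profile $g_\kappa(\nu t)$ is used for all $k\in\Lambda$, and the cross terms $\sum_{k\ne k'} a_k a_{k'} \mathbf{W}_k(\sigma x)\otimes\mathbf{W}_{k'}(\sigma x)$ are collected into a ``far'' error $R_{\Far}$; this is small in $L^1 L^r$ because the spatial supports of distinct Mikado pipes intersect in a set of measure $\lesssim\mu^{-d}$. Your disjoint-support variant would also work but changes the form of the temporal corrector.

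Third, in the paper the temporal corrector is $w^{(t)}=\nu^{-1}h_\kappa(\nu t)\mathbb{P}\,\D\overline{R}$, built directly from $\overline{R}$ with no spatial oscillation or concentration; its smallness comes purely from the prefactor $\nu^{-1}$ (temporal oscillation), and the identity $\partial_t(\nu^{-1}h_\kappa(\nu t))=g_\kappa^2(\nu t)-1$ makes $\partial_t w^{(t)}$ cancel exactly the residual $(1-g_\kappa^2)\D\overline{R}$ from the geometric lemma. The resulting parameter hierarchy is $\nu\ll\mu\ll\sigma\ll\kappa^{1/2}$ (four separate scales, not three), and the concrete choice is $\nu=\lambda^\gamma$, $\mu=\lambda$, $\sigma=\lambda^{1/\gamma}$, $\kappa=\lambda^{2/\gamma+d+1-6\gamma}$ for a small $\gamma=\gamma(p,q)$. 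Your stated hierarchy $\lambda\ll\mu$ appears inverted relative to the paper's (where spatial concentration $\mu$ is much smaller than spatial oscillation $\sigma$), though your verbal description ``lots of spatial oscillation, little spatial concentration'' is correct.
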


It is clear that Proposition~\ref{prop:main} follows from Proposition~\ref{prop:main_step_1} and Proposition~\ref{prop:main_2}. In the remainder of this section, we will construct the velocity perturbation $w$ and define its associated Reynolds stress $R_1$ and pressure $p_1$. The well-preparedness of $(u_1, R_1)$ will be an easy consequence of the definition of $w$, whereas all the estimates will be proven in the next section.

\subsection{Stationary Mikado flows for the convex integration}
The main building blocks of the convex integration scheme is the Mikado flows $\mathbf{W}_{k}:\TT^d \to \RR^d $ introduced in \cite{MR3614753}. In other contexts \cite{MR3884855,MR3951691}, they are called concentrated Mikado flows, or Mikado flows with concentration since they are supported on periodic cylinders with a small radius. Here for brevity, we simply refer to them as the Mikado flows.

We start with a geometric lemma that dates back to the work of Nash \cite{MR0065993}. A proof of the following version, which is essentially due to De Lellis and Székelyhidi Jr., can be found in \cite[Lemma~3.3]{MR3340997} and \cite[Lemma~3.2]{MR3090182}. This lemma is the key reason to use Mikado flows in the convex integration.

Recall that $ \mathcal{S}^{d \times d}_+$ is the set of positive definite symmetric $d\times d$ matrices and $\ek = \frac{k}{|k|}$ for any $k\in \ZZ^d$.

\begin{lemma}\label{lemma:geometric}
For any compact subset $\mathcal{N} \subset  \mathcal{S}^{d \times d}_+$, there exists a finite set $\L \subset \ZZ^d$ and smooth functions $\Gamma_k \in C^\infty(\mathcal{N}; \RR )$ for any $k \in \L$ such that 
\begin{align*}
R = \sum_{k \in \L } \Gamma_k^2(R) \ek \otimes \ek, \qquad \text{for all } \, R \in \mathcal{N} .
\end{align*}
\end{lemma}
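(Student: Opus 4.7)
My plan is to obtain a smooth local decomposition with strictly positive coefficients around each $R_0 \in \mathcal{N}$ and then patch via a partition of unity exploiting compactness of $\mathcal{N}$.

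For the local step, I first produce finitely many $k_1,\ldots,k_N \in \mathbb{Z}^d$ such that (a) $\{\mathbf{e}_{k_i}\otimes\mathbf{e}_{k_i}\}_{i=1}^N$ linearly span $\mathcal{S}^{d\times d}$, and (b) $R_0 = \sum_i c_i^0\,\mathbf{e}_{k_i}\otimes\mathbf{e}_{k_i}$ with all $c_i^0 > 0$. The construction uses the spectral theorem $R_0 = \sum_{i=1}^d \lambda_i v_i\otimes v_i$ with $\lambda_i > 0$, the density of rational unit vectors in $S^{d-1}$ to approximate each $v_i$ by some $\mathbf{e}_{k_i}$, followed by enlarging the collection with further integer directions to guarantee linear spanning and to absorb the small residual $R_0 - \sum\lambda_i\mathbf{e}_{k_i}\otimes\mathbf{e}_{k_i}$ via surjectivity of the resulting linear map; the coefficients remain strictly positive provided the rational approximations of $v_i$ are sufficiently close, the padding coefficients on the extra directions are chosen small and positive, and one uses that $\min_i\lambda_i(R)$ is bounded below on the compact $\mathcal{N}$. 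Writing $\Phi_{R_0}:\mathbb{R}^{\Lambda_{R_0}}\to\mathcal{S}^{d\times d}$ for the induced surjective linear map and $\Phi_{R_0}^+$ for any (linear, hence smooth) right inverse, I then set $c_k(R) := c_k^0 + \bigl(\Phi_{R_0}^+(R-R_0)\bigr)_k$; this is affine in $R$ and strictly positive on a neighborhood $U_{R_0}$ of $R_0$ by continuity, so $\gamma_{R_0,k}(R):=\sqrt{c_k(R)}$ is smooth and strictly positive and satisfies $R = \sum_{k\in\Lambda_{R_0}}\gamma_{R_0,k}^2(R)\,\mathbf{e}_k\otimes\mathbf{e}_k$ on $U_{R_0}$.

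By compactness, extract a finite subcover $\{U_j\}_{j=1}^M$ of $\mathcal{N}$ with direction sets $\Lambda_1,\ldots,\Lambda_M$. To glue, I select a partition of unity in squared form $\sum_{j=1}^M \alpha_j^2 \equiv 1$ on $\mathcal{N}$, with each $\alpha_j$ smooth, non-negative, and supported in $U_j$; this is obtained by normalizing bump functions, $\alpha_j := \beta_j/\sqrt{\sum_i\beta_i^2}$. Multiplying the local identities then gives
\begin{align*}
R = \sum_{j=1}^M \alpha_j^2(R)\,R = \sum_{j=1}^M\sum_{k\in\Lambda_j}\bigl(\alpha_j(R)\,\gamma_{j,k}(R)\bigr)^2\,\mathbf{e}_k\otimes\mathbf{e}_k,
\end{align*}
and I take $\Lambda := \bigcup_j \Lambda_j$.

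The main obstacle is to produce a single smooth $\Gamma_k$ (rather than merely a smooth $\Gamma_k^2$) for each $k\in\Lambda$, since the naive collection $\Gamma_k^2 = \sum_{j:k\in\Lambda_j}(\alpha_j\gamma_{j,k})^2$ is a sum of squares of smooth functions whose pointwise square root need not be smooth at common zeros. My remedy is to arrange in the local step that the sets $\Lambda_j$ are pairwise disjoint: because $\mathbb{Z}^d$ contains infinitely many rational directions arbitrarily close to any prescribed $v\in S^{d-1}$, I may perform the constructions of $\Lambda_1,\Lambda_2,\ldots,\Lambda_M$ inductively, at each stage choosing the integer directions to avoid the finitely many previously chosen ones without spoiling the approximation of eigenvectors or the strict positivity of the resulting coefficients. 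With disjoint $\Lambda_j$'s, every $k\in\Lambda$ lies in a unique chart, and $\Gamma_k := \alpha_j\,\gamma_{j,k}$ (for the unique $j$ with $k\in\Lambda_j$) is smooth, yielding the desired identity $R = \sum_{k\in\Lambda}\Gamma_k^2(R)\,\mathbf{e}_k\otimes\mathbf{e}_k$ on $\mathcal{N}$.
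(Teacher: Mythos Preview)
The paper does not supply its own proof of this lemma; it attributes the statement to Nash and to De Lellis--Sz\'ekelyhidi Jr.\ and cites a reference for the argument. Your overall architecture---a local decomposition with strictly positive coefficients near each $R_0$, a finite cover by compactness, and a squared partition of unity $\sum_j\alpha_j^2\equiv 1$ to glue---is the standard one, and your device of making the direction sets $\Lambda_j$ pairwise disjoint (possible because rational directions are dense, so at each stage one may avoid any prescribed finite set) to bypass the smoothness problem for $\sqrt{\sum_j(\alpha_j\gamma_{j,k})^2}$ is correct and neat.

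There is, however, a gap in the local step. You need $c^0\in\mathbb{R}^{\Lambda_{R_0}}$ with $\Phi_{R_0}(c^0)=R_0$ and every $c^0_k>0$. Starting from $(\lambda_1,\ldots,\lambda_d,\epsilon,\ldots,\epsilon)$ and correcting by $\Phi_{R_0}^+$ applied to the residual does not secure positivity on the padding coordinates: the correction has magnitude of order $\|\Phi_{R_0}^+\|\bigl(|R_0-\tilde R_0|+\epsilon|S|\bigr)$ with $S=\sum_{j>d}\mathbf{e}_{k_j}\otimes\mathbf{e}_{k_j}$, and nothing forces $\|\Phi_{R_0}^+\|\,|S|<1$, so there need not exist $\epsilon>0$ making the correction smaller than $\epsilon$ itself. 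A clean repair: first show that the open cone generated by $\{\mathbf{e}_k\otimes\mathbf{e}_k:k\in\mathbb{Z}^d\setminus\{0\}\}$ equals $\mathcal{S}^{d\times d}_+$ (density of rational directions plus openness once the generators span); then fix any finite spanning set $\Lambda^\ast$ and any $S_0$ in its open cone, split $R_0=\eta S_0+(R_0-\eta S_0)$ with $\eta>0$ small enough that $R_0-\eta S_0>0$, decompose the second summand over some finite $\Lambda_1$ via the cone identity, and take $\Lambda_{R_0}=\Lambda^\ast\cup\Lambda_1$. With this fix the remainder of your argument goes through unchanged, including the inductive choice of disjoint $\Lambda_j$'s (one simply avoids the finitely many previously used directions when selecting $\Lambda^\ast$ and $\Lambda_1$ at each stage).
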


We apply the lemma for $\mathcal{N}= B_{\nicefrac{1}{2}}(\Id)$
where $ B_{\nicefrac{1}{2}}(\Id)$ denotes the metric ball of radius $1/2$ around the identity Id in the space $ \mathcal{S}^{d \times d}_+$ to obtain smooth functions $\Gamma_k  $ for any $k \in \L \subset \mathbb{Z}^d$. Throughout the paper, the direction set $\L$ is fixed, and we construct the Mikado flows as follows.

We choose points $p_k \in (0,1)^d $ such that $p_k \neq p_{-k} $ if both $k,-k \in \L$. For each $k\in \L $, we denote by $l_k \subset \TT^d$ the periodic line passing through $p_k$ in direction $k$, namely
$$
l_k =\{    t k + p_k \in \TT^d  : t \in \RR  \}.
$$

Since $\L$ is a finite lattice set and we identify $\TT^d $ with a periodic box $  [0,1]^d$, there exists a geometric constant $C_\L \in \NN$ depending on the set $\L$  such that 
$$
|l_k \cap l_{k'}| \leq C_\L \quad \text{for any $k, k' \in \L$},
$$
where we note that  $ l_k \cap l_{-k} = \emptyset$ due to $p_k \neq p_{-k} $.

Here we do not even require nonparallel periodic lines to be disjoint in $d \geq 3$ in contrast to \cite{MR3951691}. Since nonparallel periodic lines have to intersect in the 2D case, we would rather present a unified approach based on the fact that the intersection parts are very small due to the concentration parameter $\mu>0$, see Theorem \ref{thm:main_thm_for_W_k} below.

Let $  \mu > 0$ be the spatial concentration parameter whose value will be fixed in the end of the proof. Let $ \phi,\psi \in C^\infty_c ( [1/2 ,1])$ and constants $c_k>0$ be such that for any sufficiently large $\mu $ if we define $\psi_k, \phi_k : \TT^d \to \RR$ by
\begin{equation}\label{eq:def_psi_k}
\psi_k := c_k \mu^{\frac{d-1}{2}} \psi(\mu \dist( l_k, x))
\end{equation}
and
\begin{equation}\label{eq:def_phi_k}
\phi_k := c_k \mu^{\frac{d-1}{2} -2} \phi(\mu \dist( l_k, x)) 
\end{equation}
then 
\begin{equation}\label{eq:def_phi_k_psi_k}
\Delta \phi_k = \psi_k \quad \text{on $\TT^d$} \quad \text{and} \quad \int_{\TT^d} \psi_k^2 \, dx =1.
\end{equation}

Note that
\begin{equation}\label{eq:small_intersection_l_k}
\Supp \psi_k \cap \Supp \psi_{k'} \subset \{ x\in \TT^d: \dist( x, l_k \cap  l_{k'} )\leq M_\L \mu^{-1})
\end{equation}
for a sufficiently large constant $M_\L$ depending on $\L$.

Finally, the stationary Mikado flows $\mathbf{W}_{k}:\TT^d \to \RR^d$ are defined by
\begin{equation}\label{eq:def_bf_W_k}
\mathbf{W}_{k} = \psi_k \ek,
\end{equation}
where the constant vector $\ek = \frac{k}{|k|}$.
Using the gradient field $  \nabla \phi_k$, we may write $\bwk $ as a divergence of a skew-symmetric tensor $\mathbf{\Omega}_k \in C^\infty_0(\TT^d, \RR^{d\times d})$,
\begin{equation}\label{eq:def_bf_O_k}
\mathbf{\Omega}_k := \ek \otimes \nabla \phi_k - \nabla \phi_k  \otimes \ek.
\end{equation}
Indeed, $\bok$ is skew-symmetric by definition, and by a direct computation
$$
\D \bok = \D ( \nabla \phi_k )\ek -( \ek \cdot \nabla  )\nabla \phi_k = \Delta \phi_k \ek - 0 = \bwk.
$$

We summarize the main properties of the Mikado flows $\bwk$ in the following theorem.
\begin{theorem} \label{thm:main_thm_for_W_k}
Let $d \geq 2$ be the dimension. The stationary Mikado flows  $\mathbf{W}_{k}:\TT^d \to \RR^d$ satisfy the following.
\begin{enumerate}
    \item Each $ \mathbf{W}_{k} \in C^\infty_0(\TT^d)$ is divergence-free,  satisfies
    $$
    \mathbf{W}_{k}  =\D \mathbf{\Omega}_k,
    $$
    and solves the pressureless  Euler equations
    $$
    \D(  \bwk \otimes  \bwk ) =0.
    $$

    \item For any $1\leq p \leq \infty$, the estimates
    \begin{align*}
\mu^{-m}  \|  \nabla^m \bwk  \|_{L^p(\TT^d)}   &\lesssim_m  \mu^{\frac{d-1}{2}  - \frac{d-1}{p}},\\
   \mu^{-m}\|  \nabla^m  \mathbf{\Omega}_k \|_{L^p(\TT^d)}   &\lesssim_m  \mu^{-1+\frac{d-1}{2}  - \frac{d-1}{p}},
\end{align*}
hold uniformly in $  \mu$.
    
    \item For any $ k\in \L$, there holds
    $$
    \fint_{\TT^d} \mathbf{W}_{ k}\otimes \mathbf{W}_{ k} = \ek  \otimes \ek, 
    $$
    and for any $1 \leq p \leq \infty$
    $$
    \big\| \mathbf{W}_{ k}\otimes \mathbf{W}_{ {k'}}\big\|_{L^p(\TT^d)} \lesssim \mu^{ (d-1)-\frac{d}{p}} \quad \text{if $k \neq k'$}.
    $$

\end{enumerate}
\end{theorem}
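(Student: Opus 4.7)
My plan is to deduce all three items from the single structural observation that $\psi_k(x)$ depends only on $\dist(l_k,x)$ and $l_k$ has direction $\ek$, so $\psi_k$ is constant along $\ek$ and $\ek\cdot\nabla\psi_k\equiv 0$ (and similarly for $\phi_k$). I will begin with item (1): this observation immediately gives $\D\bwk=\ek\cdot\nabla\psi_k=0$ and $\D(\bwk\otimes\bwk)=(\ek\cdot\nabla\psi_k^2)\ek=0$, yielding the divergence-free condition and the pressureless Euler property. The potential representation $\bwk=\D\bok$ is then the direct calculation already recorded in the text, using $\Delta\phi_k=\psi_k$ together with $(\ek\cdot\nabla)\nabla\phi_k=0$. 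Smoothness and the zero-mean property on $\TT^d$ follow from $\psi,\phi\in C_c^\infty([1/2,1])$ and the fact that, for $\mu$ large, the tubular support lies strictly inside a fundamental domain.

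For item (2), I would first observe that the support of $\psi_k$ lies in the cylindrical shell $\{x:(2\mu)^{-1}\leq\dist(l_k,x)\leq\mu^{-1}\}$, and pass to cylindrical coordinates around the periodic line $l_k$ (whose length on $\TT^d$ is bounded uniformly over the finite set $\L$). The shell then has $d$-dimensional Lebesgue measure $\sim\mu^{-(d-1)}$, and since each spatial derivative produces a pointwise factor of $\mu$, I get
\[
\|\nabla^m\bwk\|_{L^\infty(\TT^d)}\lesssim_m\mu^m\cdot\mu^{(d-1)/2}.
\]
Applying $\|f\|_{L^p}\leq\|f\|_{L^\infty}|\Supp f|^{1/p}$ gives the stated $L^p$ bound, and the same argument applied to $\bok=\ek\otimes\nabla\phi_k-\nabla\phi_k\otimes\ek$, with the prefactor $\mu^{(d-1)/2-2}$ in $\phi_k$ and one additional derivative, produces the extra $\mu^{-1}$ in the exponent.

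For item (3) the diagonal identity will reduce to the normalization $\int_{\TT^d}\psi_k^2=1$ built into \eqref{eq:def_phi_k_psi_k}, since $\bwk\otimes\bwk=\psi_k^2\,\ek\otimes\ek$. For $k\neq k'$ I would split into two cases. If $k=-k'$, the choice $p_k\neq p_{-k}$ forces $l_k\cap l_{-k}=\emptyset$, and hence $\Supp\psi_k\cap\Supp\psi_{k'}=\emptyset$ once $\mu$ is large. Otherwise $k$ and $k'$ are non-parallel, and the containment \eqref{eq:small_intersection_l_k} yields
\[
|\Supp\psi_k\cap\Supp\psi_{k'}|\lesssim\mu^{-d},
\]
since the overlap lies inside at most $C_\L$ balls of radius $O(\mu^{-1})$. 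H\"older's inequality with the uniform bound $\|\bwk\|_{L^\infty}\lesssim\mu^{(d-1)/2}$ then gives $\|\bwk\otimes\mathbf{W}_{k'}\|_{L^p}\lesssim\mu^{(d-1)-d/p}$.

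The only step that is not pure dimensional analysis on a single tube is this overlap estimate for non-parallel $k,k'$: one must verify that the number of transversal intersections of distinct periodic lines on $\TT^d$ is bounded by a constant $C_\L$ depending only on the finite direction set $\L$, so that the union of their $\mu^{-1}$-neighborhoods has total volume $\lesssim\mu^{-d}$ uniformly in $\mu$. This is the only genuinely geometric ingredient and the one place where care with the toroidal geometry is required.
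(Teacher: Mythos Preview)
Your proposal is correct and follows essentially the same approach as the paper. The paper dismisses items (1), (2), and the diagonal identity in (3) as ``standard and \ldots easily deduced from \eqref{eq:def_psi_k}--\eqref{eq:def_phi_k_psi_k}'', and for the off-diagonal bound in (3) it proceeds exactly as you do: reduce to $k\neq -k'$, use the $L^\infty$ bound from (2), and control $|\Supp\psi_k\cap\Supp\psi_{k'}|$ via \eqref{eq:small_intersection_l_k} as a union of finitely many balls of radius $\sim\mu^{-1}$; the intersection bound $|l_k\cap l_{k'}|\leq C_\L$ that you flag is already stated as a structural fact before the theorem rather than reproved.
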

\begin{proof}
We only prove the last claim as the rest of them are standard and can easily be deduced from \eqref{eq:def_psi_k}-\eqref{eq:def_phi_k_psi_k}. It suffices to assume $k \neq -k'$. Using the $L^\infty$ bound in (2), we obtain
\begin{align*}
 \big\| \mathbf{W}_{ k}\otimes \mathbf{W}_{ {k'}}\big\|_{L^p(\TT^d)} \lesssim \mu^{ (d-1)}  \Big( \int_{\Supp \psi_k \cap \Supp \psi_{k'} }  \, dx\Big)^{\frac{1}{p}}.
\end{align*}

Note that due to \eqref{eq:small_intersection_l_k} $  \Supp \psi_k \cap \Supp \psi_{k'}$ is contained in a union of  finitely many balls of radius $\sim \mu^{-1}$. Thus
\begin{align*}
 \big\| \mathbf{W}_{ k}\otimes \mathbf{W}_{ {k'}}\big\|_{L^p(\TT^d)} \lesssim \mu^{ (d-1)  - \frac{d}{p}}   .
\end{align*}

\end{proof}

\subsection{Implementation of temporal concentration}
Since Mikado flows are stationary, the velocity perturbation will be homogeneous in time if we simply use Lemma~\ref{lemma:geometric} to define the coefficients. To obtain $L^p_t L^\infty $ and $L^1_t W^{1,q}$ estimates, it is necessary to introduce temporal concentration  in the perturbation.

To this end, we choose temporal functions $ g_{\kappa}$ and $h_\kappa$ to oscillate the building blocks $ \mathbf{W}_k$ intermittently in time. Specifically, $ g_{\kappa}$ will be used to oscillate $ \mathbf{W}_k$ so that the space-time cascade balances the low temporal frequency part of the old stress error $\overline{R} $, whereas $h_\kappa$ is used to define a temporal corrector whose time derivative will further balance the high temporal frequency part of the old stress error $\overline{R} $ .

Let nonnegative $g \in C_c^\infty((0,1))$ be such that
$$
\int_{0}^1 g^2(t) \, dt =1.
$$
To add in temporal concentration, let $\kappa>0$ be a large constant whose value will be specified later and define $g_\kappa : [0,1] \to \RR$ as the $1$-periodic extension of  $\kappa^{1/2} g (\kappa t)$ so that
$$
\| g_\kappa \|_{L^p([0,1])} \lesssim \kappa^{\frac{1}{2} - \frac{1}{p}}\quad \text{for all $1\leq p \leq \infty$}.
$$

The value of $\kappa$ will be specified later and the function $g_\kappa$ will be used in the definition of the velocity perturbation. As we will see in Lemma \ref{lemma:R_osc_decomposition}, the nonlinear term can only balance a portion of the stress error $\overline{R}$ and there is a leftover term which is of high temporal frequency. This motivates us to consider the following temporal corrector.

Let $h_\kappa :[0,1] \to \RR $ be defined by
$$
h_\kappa(t)  = \int_0^t g_\kappa^2(s) -1 \, ds.
$$
In view of the zero-mean condition for $g_\kappa^2(t) -1$, the function $h_\kappa :[0,1] \to \RR $ is well-defined and periodic, and we have
\begin{equation}\label{eq:bound_on_h_k}
\| h_\kappa \|_{L^\infty([0,1])} \leq 1,
\end{equation}
uniformly in $\kappa$.

We remark that for any $\nu \in \NN$, the periodically rescaled function $g_\kappa( \nu \cdot ): [0,1] \to \RR$ also verifies the bound
\begin{equation} \label{e:bound_on_g_k}
\| g_\kappa( \nu \cdot) \|_{L^p([0,1])} \lesssim \kappa^{\frac{1}{2} - \frac{1}{p}} \quad \text{for all $1\leq p \leq \infty$}.
\end{equation}
Moreover, we have the identity
\begin{equation} \label{eq:time_derivative_of_h_k}
 \p_t \left( \nu^{-1} h_{\kappa}(\nu t ) \right) = g_\kappa^2( \nu t ) -1,
\end{equation}
which will imply the smallness of the corrector, cf \eqref{eq:def_w_t}.

\subsection{Space-time cutoffs}

Before introducing the velocity perturbation, we need to define two important cutoff functions, one to ensure Lemma~\ref{lemma:geometric} applies and the other to ensure the well-preparedness of the new solution $(u_1,R_1)$.

Since Lemma~\ref{lemma:geometric} is stated for a fix compact set in $\mathcal{S}_+^{d\times d}$, we need to introduce a cutoff for the stress $\overline{R}$. Let $\chi: \RR^{d \times d }  \to \RR^+$ be a positive smooth function such that $\chi $ is monotonically increasing with respect to $|x|$ and
\begin{equation}\label{eq:def_chi}
\chi (x) =
\begin{cases}
4 \| \overline{R}\|_{L^1([0,1]\times \TT^d)} & \text{if $0 \leq |x| \leq \| \overline{R}\|_{L^1([0,1]\times \TT^d)}$}\\
4|x| & \text{if $|x| \geq 2\| \overline{R}\|_{L^1 ( [0,1]\times \TT^d ) }$}.
\end{cases}
\end{equation}

With this cutoff $\chi$, we may define a divisor for the stress $\overline{R}$ so that Lemma \ref{lemma:geometric} applies. Indeed, define $\rho \in  C^\infty([0,1] \times \TT^d)$ by
\begin{equation}\label{eq:def_rho}
\rho =  \chi(\overline{R})  .
\end{equation}
Then immediately by \eqref{eq:def_chi},
$$
\Id - \frac{\overline{R}}{\rho} \in B_{\frac{1}{2}}(\Id) \quad \text{for any $(t,x) \in [0,1] \times \TT^d$},
$$
which means we can use $\Id - \frac{\overline{R}}{\rho}$ as the argument in the smooth functions $\Gamma_k$ given by Lemma \ref{lemma:geometric}.

Next, we need another cutoff to take care of the well-preparedness of the new solution $(u_1, R_1) $ as the perturbation has to be supported within $ I$. Let $\theta \in C^\infty_c(\RR)$ be a smooth temporal cutoff function such that
\begin{equation}\label{eq:def_theta}
\theta(t) = 
\begin{cases}
1 & \text{if $\dist(t, I^c ) \geq \frac{3\tau}{2}$   } \\
0 & \text{if $\dist(t, I^c ) \leq  \tau$,}
\end{cases}
\end{equation}
where $I \subset [0,1]$ and $\tau>0$ are given by Proposition \ref{prop:main_step_1}. Note that this cutoff ensures that the new solution will still be well-prepared.

\subsection{The velocity perturbation}
We recall that we have defined four parameters for the perturbation so far:
\begin{enumerate}
    \item Temporal oscillation $ \nu \in \NN$  and  temporal concentration $ \kappa >0 $;
    \item Spatial oscillation $ \sigma \in \NN$  and spatial concentration  $ \mu \in \NN$.
    
\end{enumerate}
These four parameters are assumed to be sufficiently large for the moment and will be taken to be explicit fixed powers of a frequency parameter $\l>0$ in the next section, where we also fix the value of $r>1$ appearing in the main proposition.

With all the ingredients in hand, we are ready to define the velocity perturbation. In summary, the velocity perturbation $w:[0,1] \times \TT^d \to \RR^d$ consists of three parts,
$$
w = w^{(p)} + w^{(c)} +w^{(t)} .
$$

The principle part of the perturbation $w^{(p)}$ consists of super-positions of the building blocks $\bwk  $ oscillating with period $\sigma^{-1}$ on $\TT^d$ and period $\nu^{-1}$ on $[0,1]$:
\begin{equation}\label{eq:def_w_p}
w^{(p)} (t,x ): = \sum_{k \in \L } a_k(t,x ) \bwk (\sigma x ),
\end{equation}
where the amplitude function $a_k : [0,1] \times \TT^d \to \RR$ is given by
\begin{equation}\label{eq:def_a_k}
a_k =  \theta  g_{\kappa }(\nu t)  \rho^{\nicefrac{1}{2}} \Gamma_k\Big(\Id - \frac{ \overline{R}}{\rho }\Big).
\end{equation}
Note that \eqref{eq:def_w_p} is not divergence-free. To fix this, we introduce a divergence-free corrector using the tensor potential $ \bok$
\begin{equation}\label{eq:def_w_c}
w^{(c)} (t,x ): =      \sigma^{-1}\sum_{k \in \L }   \nabla  a_k(t,x ) : \bok(\sigma x )     . 
\end{equation}
Indeed,  we can rewrite $ w^{(p)}  + w^{(c)}$ as
\begin{equation}\label{eq:w_p_plus_w_c}
\begin{aligned}
 w^{(p)}  + w^{(c)}   & =  \sigma^{-1} \sum_{k \in \L } a_k(t,x )\D \bok  (\sigma x ) +  \sigma^{-1}  \sum_{k \in \L }   \nabla  a_k(t,x ) : \bok(\sigma x ) \\
 &=  \sigma^{-1}\D\sum_{k \in \L } a_k(t,x ) \bok  (\sigma x ),
\end{aligned}
\end{equation}
where  each $a_k \bok $ is skew-symmetric and hence $ \D ( w^{(p)}  + w^{(c)}) = 0 $.

Finally, we define a temporal corrector to balance the high temporal frequency part of the interaction. This ansatz was first introduced in \cite{MR3898708} and also used in \cite{1809.00600}. The heart of the argument is to ensure that
$$
\p_t w^{(t)}+ \D(w^{(p)} \otimes w^{(p)} ) = \text{Pressure gradient}+  \text{Terms with high spacial frequencies} +\text{Lower order terms}.
$$
However, the key difference between \cite{MR3898708,1809.00600} and the current scheme is that here the smallness of the corrector is free and it does not require much temporal oscillation, which is the reason we must use stationary spatial building blocks.

Specifically, the temporal corrector $w^{(t)}$ is defined as
\begin{equation}\label{eq:def_w_t}
w^{(t)} = \nu^{-1}  h_\kappa(\nu t)\left( \D(\overline{R} ) - \nabla\Delta^{-1}\D \D (\overline{R} ) \right),
\end{equation}
where we note that $\Delta^{-1}$ is well-defined on $\TT^d$ since $\D \D (\overline{R} ) = \p_i \p_j  \overline{R}_{ij} $ has zero spatial mean.

It is easy to check $\Supp_t w^{(t)} \subset \Supp_t \overline{R} $ and $ w^{(t)}$ is divergence-free. Indeed,
$$
\D w^{(t)}  = \nu^{-1}  h_\kappa(\nu t)\left( \p_i \p_j  \overline{R}_{ij} - \p_k \p_k\Delta^{-1} \p_i \p_j  \overline{R}_{ij} ) \right) =0.
$$

In the lemma below, we show that the leading order interaction of the principle part $w^{(p)}$ is able to balance the low temporal frequency part of the stress error $ \overline{R} $, which motivates the choice of the corrector  $w^{(t)}$, cf. \eqref{eq:New_Reynolds_Stress_2}.
\begin{lemma}\label{lemma:a_k_interactions}
The coefficients $a_k $ satisfy
$$
  a_k  = 0 \quad \text{if} \quad   \dist(t, I^c) \leq \tau,
$$
and
$$
 \sum_{k \in \L} a_k^2 \fint_{\TT^d}  \bwk \otimes \bwk \, dx = \theta^2 g_{\kappa }^2(\nu t) \rho \Id  - g_{\kappa }^2(\nu t) \overline{R}.
$$
\end{lemma}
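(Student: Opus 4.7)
The plan is to reduce the lemma to two already-established facts: the definition of the cutoff $\theta$ versus the support properties of $\overline{R}$, and the geometric decomposition from Lemma \ref{lemma:geometric} together with the normalization of the Mikado flows in Theorem \ref{thm:main_thm_for_W_k}(3).

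For the first assertion, I would simply read off the definition \eqref{eq:def_a_k}: the coefficient $a_k$ contains $\theta$ as a factor, and by the defining property \eqref{eq:def_theta} we have $\theta(t)=0$ whenever $\dist(t,I^c)\le\tau$. Hence $a_k(t,x)=0$ throughout that set.

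For the identity, the key calculation is
\[
a_k^2 = \theta^2\, g_\kappa^2(\nu t)\, \rho\, \Gamma_k^2\!\Big(\Id-\tfrac{\overline{R}}{\rho}\Big).
\]
Recall that by the choice of $\rho=\chi(\overline{R})$ in \eqref{eq:def_rho} one has $\Id - \overline{R}/\rho \in B_{1/2}(\Id)\subset\mathcal{N}$, so Lemma \ref{lemma:geometric} applies at every $(t,x)$ and gives
\[
\sum_{k\in\L}\Gamma_k^2\!\Big(\Id-\tfrac{\overline{R}}{\rho}\Big)\,\ek\otimes\ek \;=\; \Id-\tfrac{\overline{R}}{\rho}.
\]
Combining this with the normalization $\fint_{\TT^d}\bwk\otimes\bwk\,dx=\ek\otimes\ek$ from Theorem \ref{thm:main_thm_for_W_k}(3) yields
\[
\sum_{k\in\L} a_k^2 \fint_{\TT^d}\bwk\otimes\bwk\,dx
\;=\; \theta^2 g_\kappa^2(\nu t)\rho\,\Id \;-\; \theta^2 g_\kappa^2(\nu t)\,\overline{R}.
\]

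The only subtlety, and the step that is easy to overlook, is reconciling the factor $\theta^2$ on the second summand with the statement of the lemma, which carries no $\theta^2$ there. The reason it still matches is a support compatibility argument: by the construction in Proposition \ref{prop:main_step_1}, the glued stress satisfies $\overline{R}(t,x)=0$ whenever $\dist(t,I^c)\le 3\tau/2$, whereas by \eqref{eq:def_theta} we have $\theta(t)=1$ whenever $\dist(t,I^c)\ge 3\tau/2$. These two facts together force $\theta^2(t)\overline{R}(t,x)=\overline{R}(t,x)$ pointwise, so the second term reduces to $g_\kappa^2(\nu t)\overline{R}$, completing the identity. I expect no genuine obstacle here; the main point to emphasize in the write-up is precisely this $\theta^2\overline{R}=\overline{R}$ identity, which is the reason the cutoff parameters in Proposition \ref{prop:main_step_1} were chosen with the stricter buffer $3\tau/2$ rather than $\tau$.
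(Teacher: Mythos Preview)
Your proposal is correct and follows essentially the same route as the paper: the first claim via the factor $\theta$ in \eqref{eq:def_a_k} and \eqref{eq:def_theta}, the identity via Lemma~\ref{lemma:geometric} and Theorem~\ref{thm:main_thm_for_W_k}(3), and the removal of $\theta^2$ from the $\overline{R}$ term via the support inclusion $\Supp_t\overline{R}\subset\{\theta=1\}$. If anything, you spell out the $\theta^2\overline{R}=\overline{R}$ step more carefully than the paper does.
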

\begin{proof}
The first property follows from \eqref{eq:def_theta}.

Now since $ \fint  \bwk \otimes \bwk = \ek \otimes \ek$, a direct computation and Lemma~\ref{lemma:geometric} give
$$
\sum_{k \in \L } a_k^2 \ek \otimes \ek = \sum_{k \in \L } \theta^2 g_{\kappa }^2(  \nu t) \rho \Gamma_k^2 \ek \otimes \ek = \theta^2 g_{\kappa }^2(  \nu t) \rho \sum_{k \in \L } \Gamma_k^2\Big(\Id - \frac{ \overline{R} }{\rho }\Big) \ek \otimes \ek = \theta^2 g_{\kappa }^2(  \nu t) \rho \Id  - \theta^2g_{\kappa }^2( \nu t) \overline{R}.
$$
The identity follows from the fact that 
$$
\Supp \overline{R} \subset  \{t: \theta(t)=1 \}.
$$
\end{proof}

\subsection{The new Reynolds stress}
In this subsection, our goal is to design a suitable stress tensor $R_1 : [0,1] \times \TT^d \to \mathcal{S}^{d  \times  d}_0 $ such that the pair $(u_1, R_1)$ is a smooth solution of \eqref{eq:NSR} for a suitable smooth pressure $p_1$.

We first compute the nonlinear term and isolate nonlocal interactions:
\begin{align} \label{eq:New_Reynolds_Stress}
\D(w^{(p)} \otimes w^{(p)} +\overline{R}   ) = \D \Big[ \sum_{k \in \L} a_k^2 \bwk(\sigma x) \otimes \bwk(\sigma x)  +\overline{R}     \Big]  + \D R_{\Far }, 
\end{align}
where $R_{\Far }$ denotes the nonlocal interactions between Mikado flows of different directions
\begin{equation}
R_{\Far } = \sum_{k \neq k'} a_k a_{k'}  \bwk(\sigma x) \otimes \bw_{k'}(\sigma x).
\end{equation}
And then we proceed to examine the first term in \eqref{eq:New_Reynolds_Stress}, for which by Lemma \ref{lemma:a_k_interactions} we have
\begin{align}
\D \Big[ \sum_{k \in \L} &a_k^2 \bwk(\sigma x) \otimes \bwk(\sigma x)  +\overline{R}     \Big] \nonumber \\
& =  \D\Big[ \sum_{k \in \L} a_k^2 \big( \bwk(\sigma x) \otimes \bwk(\sigma x)   -\fint  \bwk \otimes \bwk   +\fint  \bwk \otimes \bwk  \big) + \overline{R}      \Big]   \nonumber \\
&  =  \D \Big[ \sum_{k \in \L} a_k^2 \big( \bwk(\sigma x) \otimes \bwk(\sigma x)   -\fint  \bwk \otimes \bwk  \big) \Big] +  \nabla ( \theta^2 g_k^2 \rho   )  + (1 - g_{\kappa }^2(\nu t) )\D \overline{R}. \label{eq:New_Reynolds_Stress_01} 
\end{align}
Finally, using the product rule, we compute the divergence term as
\begin{align}
\D \sum_{k \in \L} &a_k^2 \big( \bwk(\sigma x) \otimes \bwk(\sigma x)   -\fint  \bwk \otimes \bwk  \big) \nonumber \\ 
&=\sum_{k \in \L} \nabla(a_k^2 ) \cdot  \big( \bwk(\sigma x) \otimes \bwk(\sigma x)   -\fint  \bwk \otimes \bwk  \big)      . \label{eq:New_Reynolds_Stress_02} 
\end{align}
Typical in the convex integration, we can gain a factor of $\sigma^{-1}$ in \eqref{eq:New_Reynolds_Stress_02}  by inverting the divergence. To this end, let us use the bilinear anti-divergence operator $\mathcal{B}$ defined in Appendix \ref{sec:append_tech}. Since \eqref{eq:New_Reynolds_Stress_02} has zero spatial mean,  by \eqref{eq:div_B_tensor} it is equal to $\D R_{\Osc,x} $, where
\begin{equation}\label{eq:New_Reynolds_Stress_03}
R_{\Osc,x} =  \sum_{k \in \L} \mathcal{B}\Big(\nabla(a_k^2 ) ,    \bwk(\sigma x) \otimes \bwk(\sigma x)  -\fint \bwk \otimes \bwk \Big).
\end{equation}
Combining \eqref{eq:New_Reynolds_Stress}, \eqref{eq:New_Reynolds_Stress_01}, \eqref{eq:New_Reynolds_Stress_02}, and \eqref{eq:New_Reynolds_Stress_03} we have
\begin{equation}\label{eq:New_Reynolds_Stress_2}
\D(w^{(p)} \otimes w^{(p)} +\overline{R}   ) =  \D R_{\Osc,x} + \D R_{\Far } +  \nabla ( \theta^2 g_k^2(\nu t) \rho   )  + (1 - g_{\kappa }^2(\nu t) )\D \overline{R}.
\end{equation}

In view of the above computations, we define a temporal oscillation error
\begin{equation}\label{eq:def_osc_t_error}
R_{\Osc,t} = \nu^{-1} h_k( \nu t)     \p_t \overline{R}  ,
\end{equation}
so that the following decomposition holds.

\begin{lemma} \label{lemma:R_osc_decomposition}
Let the space-time oscillation error $R_{\Osc}$ be
$$
R_{\Osc} = R_{\Osc,x}   + R_{\Osc,t} + R_{\Far}  .
$$
Then
$$
\p_t w^{(t)}  + \D(w^{(p)} \otimes w^{(p)} +\overline{R}  ) + \nabla P = \D R_{\Osc}  . 
$$
where the pressure term $P$ is defined by
\begin{equation}\label{eq:def_pressure_P}
P = - \theta^2g_k^2(\nu  t) \rho  - \nu^{-1}   \Delta^{-1} \D \D  \p_t \big(\overline{R}h_\kappa( \nu t)    \big)  .  
\end{equation}
\end{lemma}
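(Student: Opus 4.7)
The plan is to show this is essentially a bookkeeping identity that follows from \eqref{eq:New_Reynolds_Stress_2}, the defining identity \eqref{eq:time_derivative_of_h_k} for $h_\kappa$, and the fact that $\Delta^{-1}\D\D$ commutes with multiplication by functions of $t$ alone. Equation \eqref{eq:New_Reynolds_Stress_2} already absorbs the principal quadratic interaction up to a gradient and one residual term: the only obstruction to writing $\D(w^{(p)}\otimes w^{(p)}+\overline{R})$ purely as $\D R_{\Osc,x}+\D R_{\Far}+\nabla(\cdot)$ is the high-frequency part $(1-g_\kappa^2(\nu t))\D\overline{R}$. The corrector $w^{(t)}$ is designed precisely to kill this term modulo another gradient and $\D R_{\Osc,t}$.

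First I would differentiate $w^{(t)}$ in time and apply \eqref{eq:time_derivative_of_h_k}, which gives
\[
\p_t w^{(t)} = (g_\kappa^2(\nu t)-1)\bigl(\D\overline{R}-\nabla\Delta^{-1}\D\D\overline{R}\bigr) + \nu^{-1}h_\kappa(\nu t)\bigl(\D\p_t\overline{R}-\nabla\Delta^{-1}\D\D\p_t\overline{R}\bigr).
\]
Adding $(1-g_\kappa^2(\nu t))\D\overline{R}$ cancels the $\D\overline{R}$ contribution exactly, leaving
\[
\p_t w^{(t)}+(1-g_\kappa^2(\nu t))\D\overline{R} = \nu^{-1}h_\kappa(\nu t)\D\p_t\overline{R} + (1-g_\kappa^2(\nu t))\nabla\Delta^{-1}\D\D\overline{R} - \nu^{-1}h_\kappa(\nu t)\nabla\Delta^{-1}\D\D\p_t\overline{R}.
\]
By definition \eqref{eq:def_osc_t_error}, the first summand is $\D R_{\Osc,t}$, and the remaining two summands are pure gradients since $\Delta^{-1}\D\D$ commutes with scalar functions of $t$.

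Next I would combine these two leftover gradients with the $\nabla(\theta^2 g_\kappa^2\rho)$ term from \eqref{eq:New_Reynolds_Stress_2}. The key algebraic identity is the product rule
\[
\nu^{-1}\p_t\bigl(\overline{R}\,h_\kappa(\nu t)\bigr) = \nu^{-1}h_\kappa(\nu t)\p_t\overline{R} + (g_\kappa^2(\nu t)-1)\overline{R},
\]
applied under $\Delta^{-1}\D\D$, which shows that the two gradients precisely repackage into $\nabla\bigl(\nu^{-1}\Delta^{-1}\D\D\,\p_t(\overline{R}h_\kappa(\nu t))\bigr)$. Adding everything yields $\p_t w^{(t)}+\D(w^{(p)}\otimes w^{(p)}+\overline{R})+\nabla P = \D R_{\Osc}$ with $P$ as in \eqref{eq:def_pressure_P}.

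There is no real obstacle here: the lemma is a verification that the three pieces $R_{\Osc,x}$, $R_{\Far}$, $R_{\Osc,t}$ and the corrector $w^{(t)}$ have been engineered consistently. The only step that requires care is recognizing the $\p_t(\overline{R}h_\kappa(\nu t))$ repackaging, and keeping track that every term that is not divergence-free is in fact a gradient, for which the commutation of $\Delta^{-1}\D\D$ with time-dependent scalar factors and the $x$-independence of $g_\kappa(\nu t)$ and $h_\kappa(\nu t)$ are both essential.
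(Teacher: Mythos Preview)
Your proof is correct and follows essentially the same approach as the paper's. The only minor organizational difference is that the paper keeps the second (gradient) term of $w^{(t)}$ intact under the time derivative, writing $\p_t\bigl(-\nu^{-1}h_\kappa(\nu t)\nabla\Delta^{-1}\D\D\overline{R}\bigr)$ directly as a single gradient $-\nabla\bigl[\nu^{-1}\Delta^{-1}\D\D\p_t(\overline{R}h_\kappa(\nu t))\bigr]$, whereas you expand both pieces of $\p_t w^{(t)}$ fully and then use the product rule to recombine the two leftover gradients; both routes yield the same identity.
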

\begin{proof}
By the definition of $ w^{(t)}$, we have
\[
\begin{split}
\p_t w^{(t)} &= \p_t\big(\nu^{-1} h_\kappa( \nu t)  \D   \overline{R}    \big) - \p_t\big(\nu^{-1} h_\kappa( \nu t)    \nabla \Delta^{-1} \D \D  \overline{R}  \big) \\
&=(g_\kappa^2(\nu t ) -1)   \D   \overline{R}   +  \nu^{-1} h_\kappa( \nu t)   \D   \p_t \overline{R} - \nu^{-1} \nabla \Delta^{-1} \D \D  \p_t \big(\overline{R}h_\kappa( \nu t)    \big)   \\
&=(g_\kappa^2(\nu t ) -1)   \D   \overline{R}   + R_{\Osc,t} - \nu^{-1} \nabla \Delta^{-1} \D \D  \p_t \big(\overline{R}h_\kappa( \nu t)    \big)  ,
\end{split}
\]
where we used identity~\eqref{eq:time_derivative_of_h_k} for the time derivative of $h_{\kappa}$. The conclusion follows immediately from \eqref{eq:New_Reynolds_Stress_2}.

\end{proof}

Finally, we can define the correction error and the linear error as usual:
\begin{equation}\label{eq:def_R_cor}
R_{\Cor} =  \mathcal{R}  \Big( \D \big( (w^{(c)}+w^{(t)})  \otimes w \big)  + \D \big( w^{(p)} \otimes ( w^{(c)} + w^{(t)} ) \big)  \Big)
\end{equation}
and
\begin{equation}\label{eq:def_R_lin}
R_{\Lin} = \mathcal{R}\Big(  \p_t (w^{(p)} +w^{(c)}  ) - \Delta  w  +  \D \big( \overline{u} \otimes w + w \otimes  \overline{u}  \big)  \Big) ,
\end{equation}
where $\mathcal{R}$ is an inverse divergence operator defined in \eqref{eq:appendix_R_def}.

To conclude, we summarize the main results in this section below.
\begin{lemma}\label{lemma:new_stress_R_1}
Define the new Reynolds stress by
$$
R_1 =  R_{\Lin} +R_{\Cor} + R_{\Osc} ,
$$
and the new pressure by
$$
p_1 =   \overline{p} +P.
$$
Then $(u_1,R_1)$ is a well-prepared solution to \eqref{eq:NSR},
$$
\p_t u_1 -\Delta u_1 + \D(u_1 \otimes u_1 ) + \nabla p_1 = \D R_1,
$$
and the velocity perturbation $w = u_1 - \overline{u}$ satisfies
$$
\Supp w \subset I \times \TT^d,
$$
where $I$ is as in the well-preparedness of $(u_1 , R_1)$.
\end{lemma}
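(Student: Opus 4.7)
The plan is to verify the three claims in order: (i) the support property of $w$, (ii) the well-preparedness of $(u_1, R_1)$ (both the structural property and the vanishing of $R_1$ near $I^c$), and (iii) the equation $\partial_t u_1 - \Delta u_1 + \D(u_1 \otimes u_1) + \nabla p_1 = \D R_1$. Nothing here requires a quantitative estimate; it is a bookkeeping exercise assembling the algebraic identities already established.

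First I would check the support claim. By Lemma \ref{lemma:a_k_interactions} the coefficients $a_k$ (and hence $\nabla a_k$) vanish whenever $\dist(t, I^c) \leq \tau$, so both $w^{(p)}$ and $w^{(c)}$ are supported in $I \times \TT^d$. For $w^{(t)}$, formula \eqref{eq:def_w_t} shows its temporal support is contained in $\Supp_t \overline{R}$, which by Proposition \ref{prop:main_step_1} is contained in $\{t : \dist(t,I^c) > \tfrac{3\tau}{2}\} \subset I$. Therefore $\Supp w \subset I \times \TT^d$. The divergence-free condition on $u_1$ follows from the divergence-free condition on $\overline{u}$ together with $\D(w^{(p)}+w^{(c)}) = 0$ (from \eqref{eq:w_p_plus_w_c}) and $\D w^{(t)} = 0$ (checked after \eqref{eq:def_w_t}).

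Next I would verify the well-preparedness of $(u_1,R_1)$, which amounts to showing $R_1(t,x) = 0$ when $\dist(t,I^c) \leq \tau$. In such a region, the amplitudes $a_k$ vanish together with all their derivatives, so $R_{\Osc,x}$ (via $\nabla a_k^2$) and $R_{\Far}$ (via $a_k a_{k'}$) vanish. The perturbation $w$ itself vanishes by the support analysis above, which kills $R_{\Lin}$ (which is built from $\p_t(w^{(p)}+w^{(c)})$, $\Delta w$, and the cross terms $\overline{u}\otimes w$, $w\otimes \overline{u}$) and $R_{\Cor}$ (built from products involving $w^{(c)}+w^{(t)}$). Finally, since $\overline{R}$ vanishes on the larger neighborhood $\{\dist(t,I^c)\leq \tfrac{3\tau}{2}\}$, so does $\p_t \overline{R}$ on $\{\dist(t,I^c)\leq \tau\}$, and hence $R_{\Osc,t}$ vanishes there. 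The set $I$ and length scale $\tau$ are inherited from Proposition \ref{prop:main_step_1}, which settles well-preparedness.

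The remaining task is the equation. Writing $w = w^{(p)} + w^{(c)} + w^{(t)}$ and using that $(\overline{u},\overline{R})$ solves \eqref{eq:NSR} with pressure $\overline{p}$, one computes
\begin{align*}
\partial_t u_1 - \Delta u_1 + \D(u_1 \otimes u_1) + \nabla p_1
&= \D \overline{R} + \partial_t w - \Delta w + \D(\overline{u}\otimes w + w\otimes\overline{u}) + \D(w\otimes w) + \nabla P.
\end{align*}
Split $\partial_t w = \partial_t(w^{(p)}+w^{(c)}) + \partial_t w^{(t)}$ and $w\otimes w = w^{(p)}\otimes w^{(p)} + [(w^{(c)}+w^{(t)})\otimes w + w^{(p)}\otimes(w^{(c)}+w^{(t)})]$. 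The bracketed quadratic term, together with $-\Delta w$, $\partial_t(w^{(p)}+w^{(c)})$, and the cross terms $\overline{u}\otimes w + w\otimes\overline{u}$, assembles into $\D R_{\Lin} + \D R_{\Cor}$ by the definitions \eqref{eq:def_R_cor}–\eqref{eq:def_R_lin}. Finally, Lemma \ref{lemma:R_osc_decomposition} converts the remaining piece $\partial_t w^{(t)} + \D(w^{(p)}\otimes w^{(p)}) + \D \overline{R} + \nabla P$ into $\D R_{\Osc}$. Summing yields $\D R_1$.

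Essentially no step is a genuine obstacle: the hard work was done in constructing $w$ and isolating the cascade in Lemma \ref{lemma:R_osc_decomposition}. The only point requiring care is ensuring the arguments of the antidivergence operator $\mathcal{R}$ in $R_{\Lin}$, $R_{\Cor}$, and $R_{\Osc,x}$ have zero spatial mean (so that $\D \mathcal{R}$ acts as the identity), which is immediate since each argument is itself a divergence, and to confirm that the pressure adjustment $\nabla P$ absorbs precisely the gradient terms produced in the computation \eqref{eq:New_Reynolds_Stress_01} and in the time derivative of $w^{(t)}$ via \eqref{eq:def_pressure_P}.
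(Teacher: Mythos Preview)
Your proposal is correct and follows the same approach as the paper: expand $\partial_t u_1 - \Delta u_1 + \D(u_1\otimes u_1) + \nabla p_1$ using that $(\overline{u},\overline{R})$ solves \eqref{eq:NSR}, group the terms into $\D R_{\Lin} + \D R_{\Cor}$ by the definitions \eqref{eq:def_R_cor}--\eqref{eq:def_R_lin}, and invoke Lemma~\ref{lemma:R_osc_decomposition} for the remaining piece; then read off the support and well-preparedness from the vanishing of $a_k$ (Lemma~\ref{lemma:a_k_interactions}) and of $\overline{R}$ (Proposition~\ref{prop:main_step_1}). Your write-up is in fact more careful than the paper's, which handles the well-preparedness of $R_1$ in a single sentence, whereas you verify each of $R_{\Osc,x}$, $R_{\Osc,t}$, $R_{\Far}$, $R_{\Lin}$, $R_{\Cor}$ individually and flag the zero-mean requirement for $\mathcal{R}$.
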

\begin{proof}
A direct computation of the left-hand side gives
\begin{align*}
\p_t u_1 -\Delta u_1 + \D(u_1 \otimes u_1 ) + \nabla p_1  =& \p_t \overline{u} -\Delta \overline{u}  + \D( \overline{u} \otimes \overline{u} ) + \nabla p\\
& +  \p_t w -\Delta w + \D(\overline{u}  \otimes w ) + \D(w  \otimes \overline{u} ) +\D(w  \otimes w ) + \nabla P  \\
= & \D \overline{R}+  \p_t w -\Delta w + \D(\overline{u}  \otimes w ) + \D(w  \otimes \overline{u} ) +\D(w  \otimes w ) + \nabla P,
\end{align*}
where we have use the fact that $(\overline{u} , \overline{R})  $ solves \eqref{eq:NSR} with pressure $\overline{p}$.

From the definitions \eqref{eq:def_R_cor}, \eqref{eq:def_R_lin} and Lemma \ref{lemma:R_osc_decomposition} we can conclude that $(u_1, R_1)$ solves \eqref{eq:NSR}.

The claim that $(u_1, R_1)$ is well-prepared and $\Supp_t w \subset I $ follows from the fact that $(\overline{u} , \overline{R})$ is given by Proposition \ref{prop:main_step_1} and the perturbations $w^{(p)}, w^{(c)}$ and  $w^{(t)}$ satisfy 
$$
  w^{(p)} = w^{(c)} =0 \quad \text{if $\dist(t, I^c) \leq  \tau$}
$$
and
$$
\Supp_t w^{(t)} \subset   \Supp_t \overline{R}  .
$$
\end{proof}

\section{Proof of Proposition \ref{prop:main_2}}\label{sec:proof_step_2}

In this section we will show that the velocity perturbation $w$ and the new Reynolds stress $R_1$ derived in Section \ref{sec:proof_step_2_convex_integration} satisfy the claimed properties in Proposition \ref{prop:main_2}.

As a general note, we use a constant $C_u$ for dependency on the previous solution $(\overline{u} , \overline{R})$ throughout this section. Unless otherwise indicated, in the statement of below lemmas and propositions the exponents $  p,q $ and $r$ refer to the ones given by Proposition \ref{prop:main}, cf. \eqref{eq:value_gamma} and \eqref{eq:def_of_r}.

\subsection{Choice of parameters}\label{subsection:choice_of_parameters}
We fix two parameters $0<\gamma<1$ and $1<r<2$ as follows.

\begin{enumerate}
\item First, we choose $ \gamma>0$ small enough such that
\begin{equation}\label{eq:value_gamma}
 10 d\gamma \leq \min\Big\{   \frac{1}{p} -\frac{1}{2},   \frac{1}{q}  \Big\} .
\end{equation}

\item Once $\gamma$ is fixed, choose $r > 1$ such that
\begin{equation}\label{eq:def_of_r}
 d   - \frac{d }{r} \leq  \gamma .
\end{equation}

\end{enumerate}
It is clear that $r>1$ only depends on $p,q$, and $d$ as claimed in Proposition \ref{prop:main_2}. Without loss of generality, we assume $q \geq 2$.

Let $\lambda$  be a sufficiently large number whose value will be fixed in the end. We choose the parameters $\sigma, \kappa$ along with $ \nu,\mu$ in the building blocks as explicit powers of $\lambda$ as follows.
\begin{enumerate}

    \item Temporal oscillation $\nu\in \NN$ and spatial  oscillation  $\sigma  \in \NN$:
    \begin{align*}
    \nu &= \lceil \l^{ {\gamma}} \rceil ,\\
    \sigma &= \lceil \l^{\frac{1}{\gamma}}  \rceil.
    \end{align*}
    
    \item  Temporal concentration $\kappa >0$ and spatial concentration $\mu >0$:
    \begin{align*}
    \kappa &= \l^{ \frac{2}{\gamma} + d+1 - 6 \gamma },\\
    \mu &= \l^{  }.
    \end{align*}
\end{enumerate}
 
For convenience, we insist that $\frac{1}{\gamma
} \in \gamma \NN$ so that if $\nu = \l^{ {\gamma}}$, then $ \sigma = \l^{\frac{1}{\gamma}}$.

Note that we have the hierarchy of parameters
$$
\nu \ll \mu \ll \sigma \ll \kappa^{1/2}.
$$
More precisely, we have the following useful lemma that will be used throughout the next section.

\begin{lemma}\label{lemma:parameters}
For any $\l>0$ such that $\l^{ {\gamma}} \in \NN$, there hold
\begin{align}
\nu \kappa^{\frac{1}{2}} \sigma^{-1} \mu^{-1}\mu^{ \frac{d-1}{2}- \frac{d-1}{r}}&\leq \l^{-\gamma} \label{eq:temporal_constraint}\\
\kappa^{\frac{1}{2}- \frac{1}{p} }     \mu^{  \frac{d-1}{2} } &\leq \l^{-\gamma}\label{eq:LpLinfty_constraint}\\
  \kappa^{- \frac{1}{2} } \sigma \mu \mu^{  \frac{d-1}{2} -\frac{d-1}{q}} &\leq \l^{-\gamma}\label{eq:L1W1q_constraint} .
\end{align}
\end{lemma}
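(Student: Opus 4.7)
\textbf{Proof plan for Lemma \ref{lemma:parameters}.} The plan is purely to substitute the explicit powers of $\lambda$ defining $\nu,\sigma,\kappa,\mu$ into each inequality, collect exponents, and then use the two standing constraints \eqref{eq:value_gamma} and \eqref{eq:def_of_r} on $\gamma$ and $r$ to bound the resulting exponent by $-\gamma$. The ceilings $\lceil \lambda^\gamma\rceil$ and $\lceil \lambda^{1/\gamma}\rceil$ contribute only bounded multiplicative constants, which I absorb by taking $\lambda$ sufficiently large (this is implicit in how the lemma is used, since the convex integration iteration chooses $\lambda$ large at the end).

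For \eqref{eq:temporal_constraint}, a direct addition of exponents gives
\[
\gamma+\tfrac12\!\left(\tfrac{2}{\gamma}+d+1-6\gamma\right)-\tfrac1\gamma-1+\tfrac{d-1}{2}-\tfrac{d-1}{r}
\;=\;-2\gamma+(d-1)\!\left(1-\tfrac{1}{r}\right).
\]
The hypothesis \eqref{eq:def_of_r} states $d-d/r\le\gamma$, i.e.\ $1-1/r\le\gamma/d$, hence $(d-1)(1-1/r)\le\gamma$ and the exponent is at most $-\gamma$. This is the only step that actually uses the precise value of $r$, so this is where I must be most careful; everything else is elementary bookkeeping.

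For \eqref{eq:LpLinfty_constraint}, set $\alpha:=\tfrac1p-\tfrac12>0$. The exponent of $\lambda$ is
\[
-\alpha\!\left(\tfrac{2}{\gamma}+d+1-6\gamma\right)+\tfrac{d-1}{2}
\;\le\;-\tfrac{2\alpha}{\gamma}+\tfrac{d-1}{2}.
\]
By \eqref{eq:value_gamma}, $\alpha\ge 10 d\gamma$, so $2\alpha/\gamma\ge 20d$, which dwarfs $(d-1)/2$ and certainly gives an exponent $\le-\gamma$. For \eqref{eq:L1W1q_constraint}, the $1/\gamma$ contributions from $\sigma$ and $\kappa^{-1/2}$ cancel, leaving
\[
-\tfrac12(d+1-6\gamma)+1+\tfrac{d-1}{2}-\tfrac{d-1}{q}
\;=\;3\gamma-\tfrac{d-1}{q}.
\]
Since $d\ge 2$ and \eqref{eq:value_gamma} forces $10d\gamma\le 1/q$, we have $4\gamma\le 1/q\le (d-1)/q$, hence the exponent is at most $-\gamma$.

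The only real content is the exponent arithmetic in \eqref{eq:temporal_constraint}: the choice $\kappa=\lambda^{2/\gamma+d+1-6\gamma}$ was engineered precisely so that the $1/\gamma$ contributions coming from $\kappa^{1/2}$ and $\sigma^{-1}$ cancel, leaving a residue controlled by the concentration loss $(d-1)(1-1/r)$; this is the tightest of the three estimates and is the only obstruction that forces $r$ to be taken so close to $1$. The other two are comfortable because $\kappa$ was chosen large enough (via the $2/\gamma$ exponent) to absorb the $L^p$ temporal loss in \eqref{eq:LpLinfty_constraint}, while the spatial concentration $\mu=\lambda$ is small enough relative to $\sigma$ in \eqref{eq:L1W1q_constraint}.
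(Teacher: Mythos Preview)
Your proposal is correct and follows essentially the same approach as the paper: substitute the explicit powers of $\lambda$, collect exponents, and verify the resulting exponent inequalities using the constraints \eqref{eq:value_gamma} and \eqref{eq:def_of_r}. Your arithmetic matches the paper's (the simplification of \eqref{eq:temporal_constraint} to $(d-1)(1-1/r)\le\gamma$ and of \eqref{eq:L1W1q_constraint} to $4\gamma\le(d-1)/q$ is exactly what the paper obtains), and your remark about absorbing the ceilings into the choice of large $\lambda$ is a point the paper leaves implicit.
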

\begin{proof}
The first inequality \eqref{eq:temporal_constraint} is equivalent to
\begin{align*}
\gamma+ \left(\frac{1}{\gamma} + \frac{d+1}{2} - 3\gamma \right) -\frac{1}{\gamma} -1 +\left(\frac{d-1}{2}   - \frac{d-1}{r}\right) \leq -\gamma
\end{align*}
It can be simplified to
$$
d-1 - \frac{d-1}{r} \leq \gamma
$$
which holds due to \eqref{eq:def_of_r}.

For the second one \eqref{eq:LpLinfty_constraint}, thanks to \eqref{eq:value_gamma} it suffices to show that
$$
\kappa^{- d\gamma} \mu^{\frac{d-1}{2}} \leq \l^{-\gamma}.
$$
It is equivalent to 
$$
-2d + \gamma d(d+1-6\gamma) + \frac{d-1}{2} \leq -\gamma
$$
which holds trivially since $10 d\gamma \leq 1$.

The third one \eqref{eq:L1W1q_constraint}, looking again at the exponents, reads as
$$
-\left(\frac{1}{\gamma} + \frac{d  + 1}{2} - 3 \gamma\right) +\frac{1}{\gamma} +1 +\left(\frac{d-1}{2} - \frac{d-1}{q}\right) \leq -\gamma.
$$
Simplifying, we obtain
$$
 - \frac{d-1}{q} \leq - 4 \gamma,
$$
which also holds due to \eqref{eq:value_gamma}.

\end{proof}

In what follows, we can assume without loss of generality that $\sigma = \l^{\frac{1}{\gamma}}$ and $\nu  =   \l^{ {\gamma}}  $ as we will only require $\l$ to be sufficiently large.

\subsection{Estimates on velocity perturbation}

We first estimate the coefficient $a_k$ of the perturbation $w$. Recall the cutoff threshold $\rho$ is defined by \eqref{eq:def_rho} and function $g_{\kappa}$ is nonnegative.
\begin{lemma}\label{lemma:estimates_a_k}
The coefficients $a_k$ are smooth on $[0,1] \times \TT^d$ and  
$$
\| \p_t^n \nabla^m a_k  \|_{L^p(0,1; L^\infty(\TT^d))} \leq C_{u, m,n} (\nu \kappa)^n \kappa^{\frac{1}{2}   - \frac{1}{p}} \quad \text{for any $p\in[1, \infty]$, }
$$
where $C_{\overline{u}, m,n} $ are constants independent of $\nu$ and $\kappa$ (but depending on $\overline{u}$ and hence $\tau$).
In addition, the bound
$$
\|a_k (t) \|_{L^2(\TT^d)} \lesssim \theta(t)  g_{\kappa}(\nu t) \Big( \int_{\TT^d }\rho(t,x) \, dx \Big)^{ \frac{1}{2}}
$$
holds uniformly for all time $t\in[0,1]$.
\end{lemma}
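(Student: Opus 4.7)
The approach is a direct computation based on the product structure of $a_k$ in \eqref{eq:def_a_k}. The first step is smoothness. Since $\overline{R}$ is smooth and $\chi$ is smooth with $\chi(x)\geq 1$, the function $\rho=\chi(\overline{R})$ is smooth and bounded below by $1$, so $\rho^{1/2}$ is smooth on $[0,1]\times\TT^d$. By construction of $\chi$, $\mathrm{Id}-\overline{R}/\rho$ takes values in $B_{1/2}(\mathrm{Id})$, hence $\Gamma_k(\mathrm{Id}-\overline{R}/\rho)$ is smooth by Lemma~\ref{lemma:geometric}. Multiplying by the smooth cutoff $\theta$ and the smooth scalar function $g_\kappa(\nu t)$, we conclude $a_k\in C^\infty([0,1]\times\TT^d)$.

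For the derivative estimate, write $a_k=g_\kappa(\nu t)\,b_k(t,x)$ where $b_k=\theta\,\rho^{1/2}\,\Gamma_k(\mathrm{Id}-\overline{R}/\rho)$ is a smooth function on $[0,1]\times\TT^d$ depending only on $\overline{u}$ and $\overline{R}$. In particular, for every $m,n\geq 0$,
\begin{equation*}
\|\p_t^n\nabla^m b_k\|_{L^\infty([0,1]\times\TT^d)} \leq C_{u,m,n}
\end{equation*}
with $C_{u,m,n}$ independent of $\kappa$ and $\nu$. Applying Leibniz's rule in time and using
\begin{equation*}
\p_t^n g_\kappa(\nu t) = \nu^n\kappa^{n+1/2}\,g^{(n)}(\kappa\nu t),
\end{equation*}
together with \eqref{e:bound_on_g_k} applied to each rescaled derivative, we get
\begin{equation*}
\|\p_t^n\nabla^m a_k\|_{L^p_tL^\infty_x} \lesssim \sum_{j=0}^n\|\p_t^jg_\kappa(\nu\cdot)\|_{L^p([0,1])}\|\p_t^{n-j}\nabla^mb_k\|_{L^\infty_{t,x}} \leq C_{u,m,n}(\nu\kappa)^n\kappa^{\frac{1}{2}-\frac{1}{p}},
\end{equation*}
as claimed (the largest power $(\nu\kappa)^n$ is saturated by $j=n$, while lower-order terms are absorbed into the constant since $\nu\kappa\geq 1$).

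For the $L^2$ bound, at each fixed $t$ the factors $\theta(t)$ and $g_\kappa(\nu t)$ are scalar, and $\Gamma_k$ is continuous on the compact set $\overline{B_{1/2}(\mathrm{Id})}$, so $|\Gamma_k(\mathrm{Id}-\overline{R}(t,\cdot)/\rho(t,\cdot))|\lesssim 1$. Hence
\begin{equation*}
\|a_k(t)\|_{L^2(\TT^d)} \lesssim \theta(t)\,|g_\kappa(\nu t)|\,\|\rho^{1/2}(t,\cdot)\|_{L^2(\TT^d)} = \theta(t)\,|g_\kappa(\nu t)|\,\Bigl(\int_{\TT^d}\rho(t,x)\,dx\Bigr)^{1/2}.
\end{equation*}
(Here I read the second displayed inequality of the lemma as $g_\kappa(\nu t)$, matching the definition~\eqref{eq:def_a_k}.)

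There is no real obstacle; the only points to watch are (i) confirming $\rho\geq 1$ so that $\rho^{1/2}$ is genuinely smooth, and (ii) tracking that every time derivative costs exactly a factor $\nu\kappa$ through the chain rule applied to $g(\kappa\nu t)$, while spatial derivatives and derivatives of $\theta,\rho,\Gamma_k$ contribute only constants $C_{u,m,n}$.
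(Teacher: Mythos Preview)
Your proof is correct and follows essentially the same approach as the paper's own proof: both exploit the product structure $a_k = g_\kappa(\nu t)\cdot(\text{smooth factor depending only on }\overline{u},\overline{R})$ and observe that all spatial derivatives and all derivatives of $\theta,\rho^{1/2},\Gamma_k$ contribute only constants $C_{u,m,n}$, so the only large parameter enters through $\partial_t^j g_\kappa(\nu\cdot)$. Your version is in fact a bit more careful---you make explicit the point $\rho\geq 1$ needed for smoothness of $\rho^{1/2}$, you write out the Leibniz sum rather than the shorthand in the paper, and you correctly flag that the ``$g_\kappa(\sigma t)$'' in the second displayed inequality of the lemma should read $g_\kappa(\nu t)$ to match \eqref{eq:def_a_k}.
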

\begin{proof}
It follows from definition \eqref{eq:def_a_k} that $a_k$ is smooth. Since the implicit constant is allowed to depend on $(\overline{u} , \overline{R}) $, it suffices to consider only the time differentiation. 

We have that
\[
\begin{split}
\big\|\p^n_t \big[\theta(\cdot)g_\kappa(  \nu \cdot) \big]\big\|_{L^p([0,1])} &\leq 
\|\theta\|_{C^n([0,1])} \|\p^n_t g_\kappa( \nu \cdot)\|_{L^p([0,1])}  \\
&\lesssim (\nu\kappa)^n \kappa^{\frac{1}{2}   - \frac{1}{p}},
\end{split}
\]
which implies the first bound. The second bound follows immediately from the definition of $a_k$:
$$
\|a_k (t)  \|_{L^2(\TT^d)} \lesssim \theta g_\kappa \left( \int_{\TT^d} \rho \Gamma_k\big(\Id - \frac{ \overline{R}}{\rho }\big)  \,dx   \right)^\frac{1}{2}  \lesssim  \theta g_\kappa \left( \int_{\TT^d} \rho    \,dx   \right)^\frac{1}{2} .
$$

\end{proof}

With the estimates of $a_k$ in hand, we start estimating the velocity perturbation. As expected, the principle part $w^{(p)}$ is the largest among all parts in $w$.
\begin{proposition} \label{p:estimates_on_w^p}
The principle part $w^{(p)}$ satisfies
\begin{align*}
\| w^{(p)} \|_{L^2([0,1] \times \TT^d) }  \lesssim \| \overline{R}  \|_{L^1([0,1] \times \TT^d) }^\frac{1}{2} + C_u \sigma^{-\frac{1}{2}},
\end{align*}
and 
\begin{align*}
\| w^{(p)} \|_{L^p ( 0,1; L^\infty(\TT^d)) } + \| w^{(p)} \|_{L^1 ( 0,1; W^{1,q}(\TT^d)) }  \leq C_u \l^{-\gamma}.
\end{align*}

In particular, for sufficiently large $\l$, 
\begin{align*}
\| w^{(p)} \|_{L^2([0,1] \times \TT^d) } & \lesssim \| \overline{R}  \|^{\frac{1}{2}}_{L^1([0,1] \times \TT^d) }, \\
\| w^{(p)} \|_{L^p( 0,1; L^\infty(\TT^d)) }  & \leq \frac{\delta}{4}.
\end{align*}
\end{proposition}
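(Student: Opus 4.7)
The plan is to prove the three estimates by combining the pointwise bounds on the amplitudes $a_k$ (Lemma~\ref{lemma:estimates_a_k}) with the Mikado-flow bounds (Theorem~\ref{thm:main_thm_for_W_k}), exploiting the scale separation between the slowly varying $a_k$ and the rapidly oscillating $\bwk(\sigma\,\cdot)$. The $L^p L^\infty$ and $L^1 W^{1,q}$ bounds will follow by direct H\"older estimates and the parameter choices of Lemma~\ref{lemma:parameters}, while the $L^2$ bound requires additional care via the improved H\"older inequality from Appendix~\ref{sec:append_tech}.

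For the $L^2$ estimate, I would expand
\[
|w^{(p)}|^2 \;=\; \sum_{k\in\L} a_k^2\,|\bwk(\sigma x)|^2 \;+\; \sum_{k\neq k'} a_k a_{k'}\,\bwk(\sigma x)\cdot\bw_{k'}(\sigma x),
\]
and treat the two sums separately. For the off-diagonal terms, the estimate $\|\bwk\otimes\bw_{k'}\|_{L^1(\TT^d)} \lesssim \mu^{-1}$ from Theorem~\ref{thm:main_thm_for_W_k}(3) combined with $\|a_k\|_{L^\infty_{t,x}}\leq C_u$ shows they are negligible (of order $\mu^{-1}$). For the diagonal terms, the improved H\"older inequality lets me replace $|\bwk(\sigma x)|^2$ by its mean $\fint|\bwk|^2 = 1$ (which equals $|\ek|^2$, a consequence of Theorem~\ref{thm:main_thm_for_W_k}(3)) at the cost of an error of order $\sigma^{-1}$ times higher $a_k$ norms, which are absorbed into $C_u\sigma^{-1/2}$ after taking the square root. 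The main term then becomes $\int_0^1 \sum_k \|a_k(t)\|_{L^2}^2\,dt$, and taking the trace of the identity in Lemma~\ref{lemma:a_k_interactions} yields $\sum_k a_k^2 = d\,\theta^2 g_\kappa^2(\nu t)\,\rho$. Using $\rho = \chi(\overline{R}) \leq C(1+|\overline{R}|)$ together with the fact that $g_\kappa$ has $L^2$ norm normalized to~$1$, integration in time produces a bound of the form $C(\|\overline{R}\|_{L^1} + \text{small})$, giving the desired $L^2$ estimate after taking the square root.

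For the $L^p_t L^\infty_x$ bound, I would apply the triangle inequality and use Lemma~\ref{lemma:estimates_a_k} with $n=m=0$ to get $\|a_k\|_{L^p_t L^\infty_x} \lesssim \kappa^{1/2-1/p}$, together with the Mikado bound $\|\bwk\|_{L^\infty} \lesssim \mu^{(d-1)/2}$ from Theorem~\ref{thm:main_thm_for_W_k}(2). Since $\|\bwk(\sigma\,\cdot)\|_{L^\infty} = \|\bwk\|_{L^\infty}$, this yields
\[
\|w^{(p)}\|_{L^p_t L^\infty_x} \;\lesssim\; \kappa^{1/2-1/p}\mu^{(d-1)/2} \;\leq\; \l^{-\gamma}
\]
by \eqref{eq:LpLinfty_constraint}. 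For the $L^1_t W^{1,q}_x$ estimate, the derivative $\nabla w^{(p)} = \sum_k (\nabla a_k)\bwk(\sigma\,\cdot) + \sigma\sum_k a_k(\nabla\bwk)(\sigma\,\cdot)$ is dominated by the second term; using $\|(\nabla\bwk)(\sigma\,\cdot)\|_{L^q} = \|\nabla\bwk\|_{L^q} \lesssim \mu\cdot\mu^{(d-1)/2-(d-1)/q}$ (by periodicity and change of variables) together with $\|a_k\|_{L^1_t L^\infty_x}\lesssim \kappa^{-1/2}$, this gives $\sigma\kappa^{-1/2}\mu^{1+(d-1)/2-(d-1)/q} \leq \l^{-\gamma}$ via \eqref{eq:L1W1q_constraint}; the first term is smaller because it lacks the factor $\sigma$ and $\|\nabla a_k\|$ is controlled by $C_u$.

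The principal difficulty is the $L^2$ estimate: one must extract the leading-order term $\sum_k a_k^2\fint |\bwk|^2$ from the product $\sum_k a_k^2|\bwk(\sigma x)|^2$ using the improved H\"older inequality, while simultaneously controlling the off-diagonal terms via the near-disjointness of the Mikado supports at scale $\mu^{-1}$. The ``In particular'' conclusion follows by taking $\l$ large enough so that $C_u\sigma^{-1/2}$ is negligible compared to $\|\overline{R}\|_{L^1}^{1/2}$, and that the $\l^{-\gamma}$ bound on the other two norms falls below $\delta/4$.
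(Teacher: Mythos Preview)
Your arguments for the $L^p_t L^\infty_x$ and $L^1_t W^{1,q}_x$ bounds are essentially the paper's proof. For the $L^2_{t,x}$ bound you take a different route: the paper applies the triangle inequality $\|w^{(p)}(t)\|_2 \leq \sum_k \|a_k(t)\,\bwk(\sigma\cdot)\|_2$ and then the improved H\"older inequality (Lemma~\ref{lemma:improved_Holder}) term by term, so no off-diagonal contributions ever appear; you instead square first and split into diagonal and off-diagonal pieces. Your version is workable but note two slips. First, $\|a_k\|_{L^\infty_{t,x}}$ is \emph{not} bounded by $C_u$: it carries the factor $g_\kappa(\nu t)$ and is of order $C_u\kappa^{1/2}$ by Lemma~\ref{lemma:estimates_a_k}. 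The off-diagonal bound is recovered by integrating in time first and using $\|a_k a_{k'}\|_{L^1_t L^\infty_x}\lesssim C_u\|g_\kappa^2(\nu\cdot)\|_{L^1}=C_u$, which then combines with $\|\bwk\otimes\bw_{k'}\|_{L^1}\lesssim\mu^{-1}$ as you intended; the resulting error is $C_u\mu^{-1/2}$ rather than the stated $C_u\sigma^{-1/2}$, but this is harmless for the ``In particular'' conclusion. Second, in the time integral $\int_0^1 g_\kappa^2(\nu t)\int_{\TT^d}\theta^2\rho\,dx\,dt$ you cannot simply invoke $\|g_\kappa\|_{L^2}=1$, since $\int_{\TT^d}\rho$ depends on $t$; one must apply the improved H\"older inequality again, now in the time variable with oscillation parameter $\nu$, exactly as the paper does to reach \eqref{eq:estimate_w_p_2}. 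The paper's route via the triangle inequality is cleaner precisely because it sidesteps the off-diagonal analysis altogether.
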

\begin{proof}
We first show the estimate for $L^2_{t,x}$  and then for $ L^p L^\infty$.

\noindent
{\bf $L^2_{t,x}$ estimate:}

Taking $L^2$ norm in space and appealing to Lemma~\ref{lemma:improved_Holder}, we have
\begin{align*}
\| w^{(p)} (t) \|_{L^2( \TT^d) } \lesssim \sum_{k \in \L } \| a_k(t) \|_2 \| \bwk \|_2 + \sigma^{-\frac{1}{2} } C_u  .
\end{align*}
Recall that $\| \bwk \|_2 \lesssim 1$. Then using Lemma \ref{lemma:estimates_a_k} and taking $L^2$ norm in time gives
\begin{align}\label{eq:estimate_w_p_1}
\| w^{(p)}  \|_{L^2([0,1] \times \TT^d) } \lesssim \sum_{k \in \L } \Big(\int_{0}^1  g_{\kappa}^2(\nu t) \int_{\TT^d} \rho (t,x) \,dx \,dt\Big)^\frac{1}{2} 
+ \sigma^{-\frac{1}{2} } C_u.
\end{align}
Notice that
$$
t \mapsto \int_{\TT^d} \rho (t,x) \,dx 
$$
is a smooth map on $[0,1]$. Thus, we may apply Lemma \ref{lemma:improved_Holder} once again (with $p=1$) to obtain that
\begin{equation}\label{eq:estimate_w_p_2}
\int_{0}^1  g_{\kappa}^2(\nu t) \int_{\TT^d} \rho (t,x) \,dx \,dt \lesssim \| \overline{R} \|_{L^1([0,1] \times \TT^d) } +
  C_u   \nu^{-1}   ,
\end{equation}
where we have used the fact that $\int g_\kappa^2 =1$ and thanks to \eqref{eq:def_rho} the bound 
$$
\int_{\TT^d} \rho (t,x) \,dx \lesssim \|\overline{R} (t)\|_{L^1(\TT^d)} + \| \overline{R}\|_{L^1 {([0,1]\times \TT^d)}}.
$$

Hence, combining \eqref{eq:estimate_w_p_1} and \eqref{eq:estimate_w_p_2} gives
\begin{align*}
\| w^{(p)}  \|_{L^2([0,1] \times \TT^d) } 
&\lesssim  \| \overline{R} \|_{L^1([0,1] \times \TT^d) }^\frac{1}{2}  + C_u  \sigma^{-\frac{1}{2} } . 
\end{align*}

\noindent
{\bf$ L^p_t L^\infty$ estimate:}

Taking $L^\infty$ norm in space and using H\"older's inequality give
\begin{align*}
\| w^{(p)} (t) \|_{L^\infty( \TT^d) } \lesssim \sum_{k \in \L } \| a_k(t) \|_\infty \| \bwk \|_\infty.
\end{align*}
We can now take $L^p$ in time and apply the estimates in Lemma~\ref{lemma:estimates_a_k} and Theorem~\ref{thm:main_thm_for_W_k}  to obtain
\begin{align*}
\| w^{(p)}   \|_{L^p(0,1;L^\infty( \TT^d)) } & \lesssim \mu^{  \frac{d-1}{2}  } \sum_{k \in \L } \| a_k  \|_{L^p(0,1;L^\infty( \TT^d)) }       \\
&\leq C_u \kappa^{ \frac{1}{2} -\frac{1}{p} }    \mu^{  \frac{d-1}{2}  },
\end{align*}
which by Lemma \ref{lemma:parameters} implies that
\begin{align*}
\| w^{(p)}   \|_{L^p(0,1;L^\infty( \TT^d)) }  & \leq C_u   \l^{ -  \gamma } .
\end{align*}

\noindent
{\bf$ L^1_t W^{1,q}$ estimate:}

This part is similar to the $L^p L^\infty$. We first take $W^{1,q}$ norm in space  to obtain
\begin{align*}
\| w^{(p)} (t) \|_{W^{1,q}( \TT^d) } \lesssim \sum_{k \in \L } \| a_k(t) \|_{C^1} \| \bwk ( \sigma \cdot )  \|_{W^{1,q}}.
\end{align*}

Integrating in time, by Theorem \ref{thm:main_thm_for_W_k}, Lemma \ref{lemma:estimates_a_k} we have that
\begin{align*}
\| w^{(p)}  \|_{L^1(0,1;W^{1,q}( \TT^d) )} & \lesssim \sum_{k \in \L } \| a_k \|_{L^1_t C^1} \| \bwk ( \sigma \cdot ) \|_{W^{1,q}} \\
& \lesssim \sigma \mu \mu^{ \frac{d-1}{2}  -\frac{d-1}{q} }\sum_{k \in \L } \| a_k(t) \|_{L^1_t  C^1}  \\
&\lesssim \sigma \kappa^{- \frac{1}{2}}\mu \mu^{ \frac{d-1}{2}  -\frac{d-1}{q} } ,
\end{align*}
which implies the desired bound thanks to Lemma \ref{lemma:parameters}.

\end{proof}

Next, we estimate the corrector $ w^{(c)}$, which is expected to be much smaller than $w^{(p)}$ due to the derivative gains from both the fast oscillation $\sigma $ and the tensor potential $\bf\Omega_k$ defined in \eqref{eq:def_bf_O_k}.
\begin{proposition} \label{p:estiates_on_w^c}
The divergence-free corrector $w^{(c)}$ satisfies
\begin{align*}
\| w^{(c)} \|_{L^2(0,1; L^\infty  (\TT^d) ) }  \leq   C_u   \lambda^{- \gamma },
\end{align*}
 and
\begin{align*}
\| w^{(c)} \|_{L^1(0,1; W^{1,q}  (\TT^d) ) }  \leq   C_u   \lambda^{- \gamma }   .
\end{align*}

In particular, for sufficiently large $\l$, 
\[
\| w^{(c)} \|_{L^2([0,1] \times \TT^d) } \leq \| \overline{R}  \|^{\frac{1}{2}}_{L^1([0,1] \times \TT^d) },
\]
and
\[
\| w^{(c)} \|_{L^p( 0,1; L^\infty (\TT^d)) } +\| w^{(c)} \|_{L^1(0,1; W^{1,q}  (\TT^d) ) }   \leq \frac{\delta}{4}.
\]
\end{proposition}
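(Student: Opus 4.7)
The plan is to leverage the pointwise representation
\[
w^{(c)}_i = \sigma^{-1}(\p_j a_k)\,(\bok)_{ji}(\sigma x)
\]
from~\eqref{eq:def_w_c}, together with the already-established estimates in Lemma~\ref{lemma:estimates_a_k} (for $a_k$), Theorem~\ref{thm:main_thm_for_W_k} (for $\bok$), and the parameter inequalities of Lemma~\ref{lemma:parameters}. Both claimed bounds reduce to a H\"older computation in space, then a H\"older computation in time, and finally an appeal to one of \eqref{eq:temporal_constraint}--\eqref{eq:L1W1q_constraint}. The two corollaries in the ``in particular'' part are then immediate: $\|w^{(c)}\|_{L^2_{t,x}} \leq \|w^{(c)}\|_{L^2 L^\infty}$ on the bounded domain $\TT^d$, and $\|w^{(c)}\|_{L^p L^\infty} \leq \|w^{(c)}\|_{L^2 L^\infty}$ on the bounded time interval $[0,1]$ since $p < 2$; both can be made as small as needed by taking $\lambda$ large in terms of $\overline{u},\overline{R},\delta$.

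For the $L^2 L^\infty$ estimate, the plan is to take the spatial sup-norm pointwise in $t$ to get
\[
\|w^{(c)}(t)\|_{L^\infty(\TT^d)} \lesssim \sigma^{-1}\sum_{k\in\L}\|\nabla a_k(t,\cdot)\|_{L^\infty}\|\bok\|_{L^\infty},
\]
use $\|\bok\|_{L^\infty} \lesssim \mu^{-1+(d-1)/2}$ from Theorem~\ref{thm:main_thm_for_W_k}, then take $L^2$ in time and invoke Lemma~\ref{lemma:estimates_a_k} at $p=2$, $m=1$, $n=0$, which yields $\|\nabla a_k\|_{L^2(0,1;L^\infty)} \leq C_u$. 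This produces
\[
\|w^{(c)}\|_{L^2 L^\infty} \leq C_u\, \sigma^{-1}\mu^{-1+(d-1)/2},
\]
and the choice $\sigma = \lceil\lambda^{1/\gamma}\rceil$, $\mu = \lambda$ makes the right-hand side $\lambda^{-1/\gamma+(d-3)/2}$, which is clearly $\leq \lambda^{-\gamma}$ once $\gamma$ is small.

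For the $L^1 W^{1,q}$ bound, the crucial expansion is the Leibniz rule
\[
\nabla w^{(c)} = \sigma^{-1}\sum_k \nabla^2 a_k \otimes \bok(\sigma\cdot) + \sum_k \nabla a_k \otimes (\nabla\bok)(\sigma\cdot),
\]
where the chain rule cancels the $\sigma^{-1}$ prefactor in the second term, making it the dominant piece. Applying Theorem~\ref{thm:main_thm_for_W_k} to obtain $\|\nabla\bok\|_{L^q} \lesssim \mu^{(d-1)/2-(d-1)/q}$ and Lemma~\ref{lemma:estimates_a_k} at $p=1$ to obtain $\|\nabla a_k\|_{L^1(0,1;L^\infty)} \leq C_u \kappa^{-1/2}$, I expect to get
\[
\|w^{(c)}\|_{L^1 W^{1,q}} \leq C_u\, \kappa^{-1/2}\mu^{(d-1)/2-(d-1)/q},
\]
the remaining contributions (the undifferentiated $L^q$-part of $w^{(c)}$ and the $\sigma^{-1}$-term in $\nabla w^{(c)}$) being smaller by a factor of $(\sigma\mu)^{-1}$ and therefore absorbed. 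The resulting bound is $\leq C_u\lambda^{-\gamma}$ precisely by \eqref{eq:L1W1q_constraint}. The only genuine obstacle is confirming that the gain $\sigma^{-1}$ built into the corrector survives the chain-rule derivative on $\bok(\sigma x)$; this is exactly what \eqref{eq:L1W1q_constraint} of Lemma~\ref{lemma:parameters} is calibrated to guarantee, so the parameter hierarchy does the work.
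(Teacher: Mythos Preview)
Your proposal is correct and follows essentially the same approach as the paper: in both cases the $L^2 L^\infty$ bound comes from H\"older in space, the estimate $\|\bok\|_{L^\infty}\lesssim\mu^{-1+(d-1)/2}$, and Lemma~\ref{lemma:estimates_a_k} at $p=2$, yielding $C_u\sigma^{-1}\mu^{-1+(d-1)/2}\leq C_u\lambda^{-\gamma}$; the $L^1 W^{1,q}$ bound comes from the Leibniz expansion (the paper packages this as $\sigma^{-1}\|a_k\|_{C^2}\|\bok(\sigma\cdot)\|_{W^{1,q}}$, which is the same computation), producing $C_u\kappa^{-1/2}\mu^{(d-1)/2-(d-1)/q}$, which is smaller than the $w^{(p)}$ bound by a factor $(\sigma\mu)^{-1}$ and hence $\leq C_u\lambda^{-\gamma}$ by \eqref{eq:L1W1q_constraint}. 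Your treatment of the ``in particular'' consequences via the inclusions $L^2_tL^\infty_x\hookrightarrow L^2_{t,x}$ and $L^2_tL^\infty_x\hookrightarrow L^p_tL^\infty_x$ (since $p<2$) is also exactly right.
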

\begin{proof}

\noindent
{\bf$ L^2_t L^{\infty}$ estimate:}

From the definition, we have
\[
\begin{split}
\|w^{(c)} (t)\|_{L^\infty  (\TT^d)} &\leq   \sigma^{-1}\Big\| \sum_{k \in \L }   \nabla  a_k(t  ) : \bok(\sigma \cdot )   \Big\|_{L^\infty  (\TT^d)}\\
&\lesssim  \sigma^{-1} \sum_{k \in \L } \|\nabla a_k (t)\|_{L^\infty  (\TT^d)} \| \bok(\sigma \cdot )  \|_{L^\infty  (\TT^d)}.
\end{split}
\]
Now, thanks to Theorem~\ref{thm:main_thm_for_W_k}, Lemma~\ref{lemma:estimates_a_k}, and \eqref{eq:def_of_r}, we take $L^2$ in time to obtain
\[
\begin{split}
\| w^{(c)} \|_{L^2(0,1; L^{\infty  }  (\TT^d) ) }  &\lesssim   \sigma^{-1} \mu^{-1 + \frac{d-1}{2}} \sum_{k \in \L } \|\nabla a_k    \|_{L^2(0,1; L^{\infty  } (\TT^d)) }  \\
& \leq C_u   \sigma^{-1} \mu^{-1 + \frac{d-1}{2}}  ,
\end{split}
\]
which by the definition of $\gamma$ implies that
$$
\| w^{(c)} \|_{L^2(0,1; L^{\infty  }  (\TT^d) ) }  \leq C_u \l^{-\gamma}.
$$

\noindent
{\bf$ L^1_t W^{1,q}$ estimate:}
This part is very similar to the estimation of $w^{(p)}$. We first take $W^{1,q}$ in space to obtain that
\begin{align*}
\|w^{(c)} (t)\|_{ W^{1,q}  (\TT^d)} &\leq   \sigma^{-1}\Big\| \sum_{k \in \L }   \nabla  a_k(t  ) : \bok(\sigma \cdot )   \Big\|_{ W^{1,q}  (\TT^d)}\\
&\lesssim  \sigma^{-1} \sum_{k \in \L } \| a_k (t)\|_{C^2  (\TT^d)} \| \bok(\sigma \cdot )  \|_{W^{1,q} (\TT^d)}.
\end{align*}

Integrating in space and using Lemma \ref{lemma:estimates_a_k} and Theorem \ref{thm:main_thm_for_W_k} we have
\begin{align*}
\|w^{(c)}  \|_{L^1(0,1; W^{1,q}  (\TT^d))}  
&\lesssim  \sigma^{-1} \sum_{k \in \L } \| a_k \|_{L^1(0,1; C^2  (\TT^d)  )} \| \bok(\sigma \cdot )  \|_{W^{1,q} (\TT^d)}\\
 &\lesssim \kappa^{-\frac{1}{2}} \mu^{ \frac{d-1}{2}  - \frac{d-1}{q}},
\end{align*}
which differs the estimate of $\|w^{(p)}  \|_{L^1_t   W^{1,q}   }   $ by a factor of $\sigma \mu$ and hence
\begin{align*}
\|w^{(c)}  \|_{L^1(0,1; W^{1,q}  (\TT^d))}  
&\lesssim    C_u \l^{-\gamma}.
\end{align*}

\end{proof}

Finally, we estimate the temporal corrector $w^{(t)}$. From its definition \eqref{eq:def_w_t}, one can see that the spatial frequency of $w^{(t)}$ is independent from the parameters $\sigma$, $\tau$ and $\mu$. As a result, this term poses no constraints to the choice of temporal and spatial oscillation/concentration at all and is small for basically any choice of parameters (as long as temporal oscillation $\nu$ is present). This is one of the main technical differences from \cite{MR3898708,1809.00600} where the leading order effect is temporal oscillation.

\begin{proposition} \label{prop:estimates_on_w^t}
The temporal corrector $w^{(t)}$ satisfies
\begin{align*}
\| w^{(t)} \|_{L^\infty( 0,1; W^{1,\infty} ( \TT^d) )}  \leq  C_u \nu^{-1}.
\end{align*}

In particular, for sufficiently large $\l$, 
\[
\| w^{(t)} \|_{L^2([0,1] \times \TT^d) } \leq \| \overline{R}  \|^{\frac{1}{2}}_{L^1([0,1] \times \TT^d) },
\]
and
\[
\| w^{(t)} \|_{L^p( 0,1; L^\infty(\TT^d)) } + \| w^{(t)} \|_{L^1( 0,1; W^{1,q}(\TT^d)) }  \leq \frac{\delta}{4}.
\]
\end{proposition}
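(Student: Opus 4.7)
The plan is to exploit the fact that $w^{(t)}$ is, by construction, the product of a scalar temporal factor and a purely spatial, smooth vector field inherited from the fixed stress $\overline{R}$. Crucially, the spatial frequency of $w^{(t)}$ does not depend on any of the oscillation/concentration parameters $\sigma,\mu,\kappa$, so all smallness will come from the explicit small prefactor in front of $h_\kappa(\nu t)$ in~\eqref{eq:def_w_t}, which is in any case bounded by $\nu^{-1}$ because $\sigma \geq \nu$.

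First I would invoke the definition of $h_\kappa$ as an antiderivative of the zero-mean periodic function $g_\kappa^2-1$ to get $\|h_\kappa\|_{L^\infty([0,1])} \leq 1$ uniformly in $\kappa$, as already recorded in~\eqref{eq:bound_on_h_k}; consequently $\|h_\kappa(\nu\cdot)\|_{L^\infty([0,1])} \leq 1$ uniformly in $\nu$ and $\kappa$. Next I would control the spatial factor
$$F(t,\cdot) \;:=\; \D \overline{R}(t,\cdot) \;-\; \nabla \Delta^{-1}\D\D\, \overline{R}(t,\cdot).$$
Since $\overline{R}\in C^\infty([0,1]\times\TT^d)$ by Proposition~\ref{prop:main_step_1}, the field $\D\overline{R}$ is smooth. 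Moreover $\D\D\overline{R} = \p_i\p_j \overline{R}_{ij}$ is a double divergence and hence has zero spatial mean on $\TT^d$, so $\Delta^{-1}\D\D\overline{R}$ is well-defined and smooth, and thus so is $\nabla\Delta^{-1}\D\D\overline{R}$. This yields $\|F(t,\cdot)\|_{W^{1,\infty}(\TT^d)} \leq C_u$ uniformly in $t\in[0,1]$, the constant $C_u$ depending only on a finite number of derivatives of $\overline{R}$.

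Combining the two bounds I would conclude
$$\|w^{(t)}\|_{L^\infty(0,1;W^{1,\infty}(\TT^d))} \;\leq\; \nu^{-1}\,\|h_\kappa(\nu\cdot)\|_{L^\infty}\,\|F\|_{L^\infty W^{1,\infty}} \;\leq\; C_u\,\nu^{-1},$$
which is the main claim. The ``in particular'' bounds then follow from the trivial embeddings $L^\infty W^{1,\infty}\hookrightarrow L^2L^2$, $L^\infty W^{1,\infty}\hookrightarrow L^p L^\infty$, and $L^\infty W^{1,\infty}\hookrightarrow L^1 W^{1,q}$ on the unit time interval, combined with the parameter choice $\nu = \lceil\l^\gamma\rceil$: for $\l$ large enough (depending only on $\delta$, $q$, and $\overline{R}$), the quantity $C_u\nu^{-1}$ is smaller than both $\delta/8$ and $\|\overline{R}\|_{L^1([0,1]\times\TT^d)}^{1/2}$, giving the two displayed estimates.

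There is essentially no obstacle to overcome here; the estimate is deliberately painless precisely because $w^{(t)}$ was engineered so that its spatial part is locked to the fixed smooth stress $\overline{R}$, bypassing altogether the delicate balancing between temporal/spatial oscillation and concentration that is needed for $w^{(p)}$ and $w^{(c)}$. This is exactly the structural advantage over the frameworks of~\cite{MR3898708,1809.00600} highlighted in the discussion preceding the proposition.
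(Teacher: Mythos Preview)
Your proof is correct and follows essentially the same approach as the paper: bound the temporal factor $h_\kappa(\nu\cdot)$ by $1$ via~\eqref{eq:bound_on_h_k}, absorb the smooth spatial part coming from $\overline{R}$ into a constant $C_u$, and conclude. Your version is simply more explicit---you spell out why $\Delta^{-1}\D\D\overline{R}$ is well-defined, handle the $\sigma^{-1}$ versus $\nu^{-1}$ discrepancy in~\eqref{eq:def_w_t} via $\sigma\geq\nu$, and justify the ``in particular'' consequences through the obvious embeddings---whereas the paper condenses all of this into a single line.
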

\begin{proof}

It follows directly from the definition of $w^{(t)}$ that
$$
\| w^{(t)} \|_{L^\infty( 0,1; W^{1,\infty} ( \TT^d) )}   \lesssim    \nu^{-1} \| h\|_{L^\infty([0,1])} \|  \overline{R}    \|_{L^\infty( 0,1; W^{2,\infty} ( \TT^d) )}  \leq \nu^{-1} C_u,
$$
where in the last step we have used \eqref{eq:bound_on_h_k}.
\end{proof}

\subsection{Estimates on the new Reynolds stress}
The last step of the proof is to estimate $R_1$. We proceed with the decomposition in Lemma \ref{lemma:new_stress_R_1}. More specifically, we will prove that for all sufficiently large $\l$, each part of the stress $R_1$ is less than $\frac{\delta}{4} $.

\subsubsection{Linear error}
\begin{lemma}
For sufficiently large $\lambda$,
\[
\| R_{\Lin} \|_{L^1(0,1; L^r(\TT^d))} \leq \frac{\delta}{4}.
\]
\end{lemma}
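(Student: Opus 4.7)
The plan is to split $R_{\Lin}$ into three natural pieces: the time-derivative term $\mathcal{R}\p_t(w^{(p)}+w^{(c)})$, the dissipation term $-\mathcal{R}\Delta w$, and the linear transport term $\mathcal{R}\D(\overline{u}\otimes w+w\otimes \overline{u})$, and bound each separately in $L^1_t L^r_x$. The overarching tools are (i) the $L^r$-boundedness of the Calderón-Zygmund operator $\mathcal{R}\D$ (for which $r>1$ was fixed in \eqref{eq:def_of_r}), (ii) the improved Hölder inequality from Lemma~\ref{lemma:improved_Holder} applied to products of slowly varying coefficients $a_k$ with fast $\sigma$-periodic building blocks, and (iii) the amplitude and building block bounds from Lemma~\ref{lemma:estimates_a_k} and Theorem~\ref{thm:main_thm_for_W_k}. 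For sufficiently large $\l$ each piece is absorbed into $\delta/12$, yielding the claim.

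The main difficulty, as expected from the discussion after \eqref{eq:outline_5}, lies in the time-derivative piece, since $\p_t a_k$ carries the large factor $\nu\kappa^{1/2}$ in $L^1_t L^\infty_x$ (cf. Lemma~\ref{lemma:estimates_a_k}). The crucial observation is the skew-symmetric representation \eqref{eq:w_p_plus_w_c}, which lets us write $w^{(p)}+w^{(c)}=\sigma^{-1}\D\sum_{k\in\L} a_k\bok(\sigma\cdot)$. Commuting $\p_t$ inside and using that $\mathcal{R}\D$ is bounded on $L^r$, we reduce to estimating
\[
\sigma^{-1}\Bigl\|\sum_{k\in\L}\p_t a_k\,\bok(\sigma\cdot)\Bigr\|_{L^1_t L^r_x}
\lesssim \sigma^{-1}\sum_{k}\|\p_t a_k\|_{L^1_t L^\infty_x}\|\bok\|_{L^r_x}+\text{lower order},
\]
which by Lemma~\ref{lemma:estimates_a_k} and Theorem~\ref{thm:main_thm_for_W_k} is controlled by $C_u \,\nu\kappa^{1/2}\sigma^{-1}\mu^{-1+(d-1)/2-(d-1)/r}\leq C_u \l^{-\gamma}$, thanks precisely to constraint \eqref{eq:temporal_constraint}. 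This is the only place in the linear error where the parameter hierarchy is genuinely tight.

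For the dissipation term, since $\mathcal{R}$ gains only one spatial derivative, $\|\mathcal{R}\Delta w\|_{L^r_x}\lesssim \|\nabla w\|_{L^r_x}$. Choosing $q\geq r$ (which we may since $r$ is close to $1$ and $q$ is large) and applying $W^{1,q}\hookrightarrow W^{1,r}$, we bound $\|\nabla(w^{(p)}+w^{(c)})\|_{L^1_t L^r_x}$ by Propositions~\ref{p:estimates_on_w^p} and~\ref{p:estiates_on_w^c}, while $\|\nabla w^{(t)}\|_{L^1_t L^r_x}\leq \|w^{(t)}\|_{L^\infty_t W^{1,\infty}_x}\lesssim C_u\nu^{-1}$ by Proposition~\ref{prop:estimates_on_w^t}; all of these are $\leq C_u\l^{-\gamma}$. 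For the transport piece, $L^r$-boundedness of $\mathcal{R}\D$ gives $\|\mathcal{R}\D(\overline{u}\otimes w+w\otimes \overline{u})\|_{L^1_t L^r_x}\lesssim \|\overline{u}\|_{L^\infty_{t,x}}\|w\|_{L^1_t L^r_x}$, and the last factor is $C_u\l^{-\gamma}$ by interpolating the $L^p_t L^\infty_x$ and $L^1_t W^{1,q}_x$ bounds on each of $w^{(p)}, w^{(c)}, w^{(t)}$. Taking $\l$ sufficiently large depending on $(\overline u,\overline R)$ and $\delta$ finishes the proof; the only real obstacle is the bookkeeping that the parameter choices of Section~\ref{subsection:choice_of_parameters} actually make \eqref{eq:temporal_constraint} work in the time-derivative estimate.
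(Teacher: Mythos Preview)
Your proposal is correct and follows essentially the same three-part decomposition and the same key argument as the paper: in particular, you correctly identify that the only nontrivial piece is $\mathcal{R}\p_t(w^{(p)}+w^{(c)})$, handle it via the skew-symmetric representation \eqref{eq:w_p_plus_w_c} together with the $L^r$-boundedness of $\mathcal{R}\D$, and close with precisely the parameter constraint \eqref{eq:temporal_constraint}. The only minor deviations are cosmetic---the improved H\"older inequality is not actually needed anywhere in this lemma (ordinary H\"older suffices), and for the transport piece the paper bounds $\|w\otimes\overline u\|_{L^1_tL^r_x}$ directly by $\|w\|_{L^p_tL^\infty_x}\|\overline u\|_{L^\infty_{t,x}}$ rather than via $\|w\|_{L^1_tL^r_x}$---but neither affects the outcome.
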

\begin{proof}
We split the linear error into three parts:
\begin{equation*} 
\| R_{\Lin}\|_{L^1(0,1; L^r(\TT^d))} \leq   \| \underbrace{\mathcal{R}\left(\Delta w \right)   \|_{L^1_t  L^r }}_{:=L_1} +     \underbrace{  \| \mathcal{R}\big( \p_t (w^{(p)}+ w^{(c)})  \big)  \|_{L^1_t  L^r } }_{:= L_2 } +     \underbrace{ \|   \mathcal{R}\left( \D ( w \otimes \overline{u} )+ \D (\overline{u} \otimes w ) \right)  \|_{L^1_t  L^r } }_{:=L_3}.
\end{equation*}

\noindent
{\bf Estimate of  $L_1$:}

By \eqref{eq:appendix_R_2} or boundedness of Riesz transform we have
\begin{align*}
 L_1   &\lesssim   \|   w  \|_{L^1(0,1; W^{1,r}(\TT^d))} .
 \end{align*}
Note that we have estimated $w $ in $L^1_t  W^{1,q}$ and $r<2<  q$. Thus by Proposition \ref{p:estimates_on_w^p}--\ref{prop:estimates_on_w^t} we can conclude that
\begin{equation}\label{eq:proof_new_stress_lin_2}
L_1 \leq C_u \l^{- \gamma}.
\end{equation}

\noindent
{\bf Estimate of  $L_2$:}

By \eqref{eq:w_p_plus_w_c}, we have
$$
\p_t (w^{(p)}+ w^{(c)}) = \sigma^{-1} \sum_k \D( \p_t a_k \bok (\sigma \cdot )),
$$
and hence
\begin{align*}
L_2 & \leq \|\mathcal{R} \p_t (w^{(p)}+ w^{(c)}) \|_{L^1(0,1; L^r(\TT^d))} \\
 & \lesssim \sigma^{-1}\sum_k \| \mathcal{R}\D(\p_t a_k \bok  (\sigma \cdot )  ) \|_{L^1(0,1; L^r(\TT^d))}  .
 \end{align*}
 Since $ \mathcal{R}\D$ is a Calder\'on-Zygmund operator on $\TT^d$, we have
\begin{align*}
L_2 & \lesssim \sigma^{-1} \sum_k \| \p_t a_k \|_{L^1_t L^\infty }   \|\bok    \|_{r}   .
\end{align*}

Appealing to Lemma \ref{lemma:estimates_a_k} and estimates of $\bok  $ listed in Theorem \ref{thm:main_thm_for_W_k} we have
\begin{align}
L_2 & \leq C_{u} \sigma^{-1} (\nu \kappa) \kappa^{-\frac{1}{2}}   \mu^{-1+\frac{d-1}{2}  - \frac{d-1}{r}} \nonumber \\
& \leq C_u  \l^{-\gamma},\label{eq:proof_new_stress_lin_3}
\end{align}
where we used \eqref{eq:temporal_constraint} for the second inequality.

\noindent
{\bf Estimate of  $L_3$:}
For the last term we simply use $ L^r$ boundedness of $\mathcal{R} \D$ to obtain
\begin{align*}
L_3 \lesssim \|      w \otimes \overline{u} \|_{L^1_t  L^r } + \|       \overline{u} \otimes w \|_{L^1_t  L^r }.   
\end{align*}
Here we use a crude bound
\begin{align*}
 \|      w \otimes \overline{u} \|_{L^1_t  L^r } + \|       \overline{u} \otimes w \|_{L^1_t  L^r } \lesssim \| w \|_{L^p_t  L^\infty  }  \| \overline{u} \|_{L^\infty_{t,x}  } , 
\end{align*}
and apply the obtained estimates in Proposition \ref{p:estimates_on_w^p}, \ref{p:estiates_on_w^c} and \ref{prop:estimates_on_w^t} to conclude
\begin{equation}\label{eq:proof_new_stress_lin_4}
    L_3 \leq C_u \l^{- \gamma }.
\end{equation}

From \eqref{eq:proof_new_stress_lin_2}, \eqref{eq:proof_new_stress_lin_3}, and \eqref{eq:proof_new_stress_lin_4}, we can conclude that for all sufficiently large $\l$, there holds
\begin{align*}
\| R_{\Lin} \|_{L^1(0,1; L^r(\TT^d))} \leq  \frac{\delta}{4}.
\end{align*}
\end{proof}

\subsubsection{Correction error}
\begin{lemma}
For sufficiently large $\lambda$,
\[
\| R_{\Cor} \|_{L^1(0,1; L^r(\TT^d))} \leq \frac{\delta}{4}.
\]
\end{lemma}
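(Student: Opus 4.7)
The plan is to mimic the linear error estimate carried out just above. By definition \eqref{eq:def_R_cor} we have
\[
R_{\Cor} = \mathcal{R}\D\big((w^{(c)}+w^{(t)})\otimes w \;+\; w^{(p)}\otimes(w^{(c)}+w^{(t)})\big),
\]
and since $\mathcal{R}\D$ is a Calderón--Zygmund operator bounded on $L^r(\TT^d)$ for the fixed $1<r<2$ given by \eqref{eq:def_of_r}, it suffices to control each of the four resulting tensor products in $L^1_t L^r_x$.

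Next I would apply Hölder in time together with the embedding $L^2(\TT^d) \hookrightarrow L^r(\TT^d)$ (valid since $r<2$ on the compact torus) to reduce to the generic bound
\[
\|w^{(a)} \otimes w^{(b)}\|_{L^1_t L^r_x} \lesssim \|w^{(a)}\|_{L^2_t L^\infty_x}\,\|w^{(b)}\|_{L^2_t L^2_x}.
\]
The crucial structural observation is that every tensor product appearing in $R_{\Cor}$ contains at least one factor drawn from $\{w^{(c)}, w^{(t)}\}$. I would place that "small" factor into $L^2_t L^\infty_x$: Proposition~\ref{p:estiates_on_w^c} gives $\|w^{(c)}\|_{L^2 L^\infty} \leq C_u \lambda^{-\gamma}$, and Proposition~\ref{prop:estimates_on_w^t} yields $\|w^{(t)}\|_{L^2 L^\infty} \leq C_u\nu^{-1}\leq C_u\lambda^{-\gamma}$. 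The remaining factor, which is one of $w^{(p)}, w^{(c)}, w^{(t)}$, is bounded in $L^2_{t,x}$ by Propositions~\ref{p:estimates_on_w^p}--\ref{prop:estimates_on_w^t} by a constant depending only on $(\overline{u}, \overline{R})$ and on $\|\overline{R}\|_{L^1_{t,x}}^{1/2}$.

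Combining these estimates yields $\|R_{\Cor}\|_{L^1 L^r} \leq C_u \lambda^{-\gamma}$, which is $\leq \delta/4$ once $\lambda$ is taken sufficiently large depending on $C_u$, $\delta$ and $\gamma$. I do not anticipate a genuine obstacle here: all of the delicate work, namely choosing parameters so that both correctors $w^{(c)}$ and $w^{(t)}$ are negligible while the principal part $w^{(p)}$ is merely $O(1)$ in $L^2_{t,x}$, has already been performed in the estimates of the velocity perturbation. The only mild subtlety is that $r>1$ strictly is required for the $L^r$-boundedness of $\mathcal{R}\D$, which is why such an $r$ was fixed from the start in \eqref{eq:def_of_r}.
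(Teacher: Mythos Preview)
Your proposal is correct and follows essentially the same route as the paper: bound $\mathcal{R}\D$ on $L^r$, then use H\"older in time to pair the small corrector factor $w^{(c)}$ or $w^{(t)}$ in $L^2_t L^\infty_x$ against the remaining factor in $L^2_{t,x}$, invoking Propositions~\ref{p:estimates_on_w^p}--\ref{prop:estimates_on_w^t} for both pieces. The only cosmetic difference is that you make the embedding $L^2(\TT^d)\hookrightarrow L^r(\TT^d)$ explicit, whereas the paper passes directly to $\|w\|_{L^2 L^2}$.
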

\begin{proof}
 
By the boundedness of $\mathcal{R}\D$ in $L^r$, $1<r<2$, and H\"older's inequality,
\begin{equation*}
\begin{split}
\|R_{\Cor}\|_{L^1(0,1; L^r(\TT^d))} &\lesssim
\|(w^{(c)}+w^{(t)})  \otimes w\|_{L^1_t L^r }   + \|w^{(p)} \otimes ( w^{(c)} + w^{(t)} )\|_{L^1_t L^r }\\
& \lesssim 
\left(\|w^{(c)}\|_{L^2_t L^\infty }+\|w^{(t)}\|_{L^2_t L^\infty} \right) \|w\|_{L^2_{t,x} }   + \|w^{(p)}\|_{L^2_{t,x} } \left( \|w^{(c)}\|_{L^2_t L^{\infty} } + \|w^{(t)}\|_{L^2_t L^{\infty} } )\right).
\end{split}
\end{equation*}

By Propositions~\ref{p:estimates_on_w^p}, \ref{p:estiates_on_w^c}, and \ref{prop:estimates_on_w^t},
\[
\begin{split}
\|w\|_{L^2_{t,x}  } &\leq \|w^{(p)}\|_{L^2_{t,x }} + \|w^{(c)}\|_{L^2_{t,x}} +\|w^{(t)}\|_{L^2_{t,x}}\\
&\lesssim \| \overline{R}  \|^{\frac{1}{2}}_{L^1([0,1] \times \TT^d) },
\end{split}
\]
and  
\[
\|w^{(c)}\|_{L^2_t L^{ \infty } }+\|w^{(t)}\|_{L^2_t L^{\infty } } \leq C_u  \lambda^{- \gamma  }  .
\]
So for all $\lambda$ sufficiently large, we can conclude that
\[
\|R_{\Cor}\|_{L^1(0,1; L^r(\TT^d))} \leq \frac{\delta}{4}.
\]

\end{proof}

\subsubsection{Oscillation error}

\begin{lemma}
For sufficiently large $\lambda$,
\[
\| R_{\Osc} \|_{L^1(0,1; L^r(\TT^d))} \leq \frac{\delta}{4}.
\]
\end{lemma}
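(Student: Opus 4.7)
\medskip

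\noindent\textbf{Proof proposal.} The plan is to decompose
\[
R_{\Osc} = R_{\Osc,x} + R_{\Osc,t} + R_{\Far}
\]
as in Lemma~\ref{lemma:R_osc_decomposition} and estimate each term in $L^1_t L^r$ separately, verifying that each contribution can be made $\leq \delta/16$ (say) once $\lambda$ is chosen large enough. The crucial inputs are: the bilinear antidivergence $\mathcal{B}$ from the appendix (which gains a factor $\sigma^{-1}$ when applied to a high-frequency mean-zero piece), the $L^p$ bounds on the Mikado flows from Theorem~\ref{thm:main_thm_for_W_k}, the bound $\|h_\kappa\|_{L^\infty} \leq 1$, and the amplitude estimates of Lemma~\ref{lemma:estimates_a_k}.

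For the spatial oscillation error $R_{\Osc,x}$, I would write
\[
R_{\Osc,x} = \sum_{k\in\L} \mathcal{B}\Bigl( \nabla(a_k^2),\; \bwk(\sigma \cdot)\otimes \bwk(\sigma \cdot) - \fint \bwk\otimes \bwk \Bigr),
\]
apply the bilinear $\mathcal{B}$-estimate with a $\sigma^{-1}$-gain, use H\"older to get
\[
\|R_{\Osc,x}\|_{L^1_t L^r} \lesssim \sigma^{-1} \sum_k \|\nabla(a_k^2)\|_{L^1_t L^\infty} \, \|\bwk\otimes \bwk\|_{L^r(\TT^d)},
\]
and then bound the amplitude factor by $C_u$ using Lemma~\ref{lemma:estimates_a_k} at $p=2$ (which gives $\kappa^{0}=1$), and the Mikado factor by $\mu^{(d-1)-(d-1)/r}$ from Theorem~\ref{thm:main_thm_for_W_k}. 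Combining these with $\sigma = \lceil\lambda^{1/\gamma}\rceil$, $\mu=\lambda$, and the exponent condition \eqref{eq:def_of_r} (which forces $(d-1)-(d-1)/r \leq \gamma$) yields $\|R_{\Osc,x}\|_{L^1_t L^r} \leq C_u \lambda^{-\gamma+o(1)}$.

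For the temporal oscillation error $R_{\Osc,t} = \nu^{-1} h_\kappa(\nu t)\, \D\p_t \overline{R}$, the estimate is immediate: using $\|h_\kappa\|_\infty \leq 1$ and the smoothness of $\overline{R}$,
\[
\|R_{\Osc,t}\|_{L^1_t L^r} \leq \nu^{-1} \|\D\p_t \overline{R}\|_{L^1_t L^r} \leq C_u \nu^{-1} = C_u \lambda^{-\gamma}.
\]
For the ``far'' interaction term I would use the disjoint-support style estimate from Theorem~\ref{thm:main_thm_for_W_k}(3) for $k\neq k'$:
\[
\|R_{\Far}\|_{L^1_t L^r} \leq \sum_{k\neq k'} \|a_k a_{k'}\|_{L^1_t L^\infty}\, \|\bwk\otimes \bw_{k'}\|_{L^r(\TT^d)} \leq C_u \,\mu^{(d-1)-d/r},
\]
and since \eqref{eq:def_of_r} gives $(d-1)-d/r = (d-1)(1-1/r) - 1/r \leq \gamma - 1/2 < 0$, this also decays like a negative power of $\lambda$.

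The main obstacle, as usual in convex integration, is bookkeeping of the four parameters $(\nu,\kappa,\sigma,\mu)$ so that every piece is small: $R_{\Osc,x}$ wants $\sigma$ large relative to $\mu$, $R_{\Osc,t}$ wants $\nu$ large, and $R_{\Far}$ needs the $d$-dimensional support measure gain from spatial concentration. These have already been arranged via the hierarchy $\nu\ll \mu \ll \sigma \ll \kappa^{1/2}$ fixed in Section~\ref{subsection:choice_of_parameters}, and the three exponent inequalities of Lemma~\ref{lemma:parameters} together with \eqref{eq:def_of_r} are precisely what is needed. Summing the three bounds, taking $\lambda$ sufficiently large, and absorbing constants depending on $\overline{u},\overline{R}$ into the choice of $\lambda$ gives $\|R_{\Osc}\|_{L^1_t L^r}\leq \delta/4$, completing the proof.
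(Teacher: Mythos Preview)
Your proposal is correct and follows the paper's proof essentially line by line: the same three-term decomposition, the $\sigma^{-1}$ gain via $\mathcal{B}$ on $R_{\Osc,x}$, the direct bound on $R_{\Osc,t}$ using $\|h_\kappa\|_\infty\leq 1$, and the small-support estimate from Theorem~\ref{thm:main_thm_for_W_k}(3) on $R_{\Far}$. The only minor slip is that Theorem~\ref{thm:bounded_B} requires the $C^1$ norm of $\nabla(a_k^2)$ (not just $L^\infty$) in the $R_{\Osc,x}$ estimate, but this extra spatial derivative costs only another harmless factor of $C_u$ via Lemma~\ref{lemma:estimates_a_k} and does not affect the argument.
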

\begin{proof}
We will use the decomposition from Lemma~\ref{lemma:R_osc_decomposition}
$$
R_{\Osc} = R_{\Osc,x}   + R_{\Osc,t} + R_{\Far}.
$$

\noindent
{\bf Estimate of  $R_{\Osc,x}$:}

Denote $\mathbf{T}_k :[0,1] \times \TT^d \to \RR^{d\times d}$ by 
$$
\mathbf{T}_k =  \bwk   \otimes \bwk   - \fint \bwk \otimes \bwk  , 
$$
so that
$$
R_{\Osc,x} =  \sum_{k \in \L} \mathcal{B}\big(\nabla(a_k^2 ) , \mathbf{T}_k(\sigma \cdot) \big) .
$$

Using Theorem \ref{thm:bounded_B} and the fact that $\mathbf{T}_k $ has zero spatial mean, we can estimate the $L^r$ norm of $R_{\Osc,x}$ as follows.  
\[
\begin{split}
\|R_{\Osc,x}(t) \|_{L^r(\TT^d)} &=  \Big\|   \sum_{k \in \L} \mathcal{B}\big(  \nabla(a_k^2 ) , \mathbf{T}_k(\sigma \cdot)  \big) \Big\|_{L^r}\\
&\lesssim \sum_{k \in \L} \|\nabla (a_k^2)\|_{C^1} \| \mathcal{R}\big(  \mathbf{T}_k(\sigma \cdot)  \big)\|_{L^r}\\
&\lesssim \sigma^{-1}\sum_{k \in \L} \|\nabla (a_k^2)\|_{C^1} \|\mathbf{T}_k  \|_{L^r},
\end{split}
\]
where the last inequality used the fact that $ \mathbf{T}_k(\sigma \cdot)$ has zero spatial mean.
Thanks to Theorem~\ref{thm:main_thm_for_W_k}, for any $k\in \L$
\[
\| \mathbf{T}_k\|_{L^r} \lesssim \| \bwk \otimes \bwk   \|_{L^r} \lesssim \|\bwk\|_{L^{2r}}^2 \lesssim  \mu^{d- 1-\frac{d -1}{r}}.
\]
Therefore, by Lemma~\ref{lemma:estimates_a_k},
\[
\begin{split}
\| R_{\Osc,x} \|_{L^1(0,1; L^r(\TT^d))} &\leq C_u \sigma^{-1}   \mu^{d- 1-\frac{d-1}{r}} .
\end{split}
\]



\noindent
{\bf Estimate of $R_{\Osc,t}$:} 

Using the bound on $h_k$ \eqref{eq:bound_on_h_k}, we infer
\[
\begin{split}
\|R_{\Osc,t}\|_{L^1(0,1; L^r(\TT^d))} &= \|\nu^{-1} h_k(\nu t)  \D \p_t \overline{R}\|_{L^1L^r}\\
&\lesssim  \nu^{-1} \|h_k(\nu \cdot)\|_{L^1} C_u\\
&\leq  C_u\nu^{-1}  .
\end{split}
\]

\noindent
{\bf Estimate of $R_{\Far}$:} 

We can use Theorem~\ref{thm:main_thm_for_W_k} and Lemma~\ref{lemma:estimates_a_k} to obtain
\begin{align*}
\|R_{\Far }\|_{L^1(0,1; L^r(\TT^d))} & = \Big\|  \sum_{k \neq k'} a_k a_{k'} \bwk (\sigma \cdot) \otimes \bw_{{k'}}(\sigma \cdot)\Big\|_{L^1(0,1; L^r(\TT^d))}\\
&\lesssim \sum_{k \neq k'}  \|a_k\|_{L^2(0,1; L^{\infty}(\TT^d))}  \|a_{k'}\|_{L^2(0,1; L^{\infty}(\TT^d))}\|\bwk \otimes  \bw_{{k'}} \|_{L^{r}} \\
&\leq C_u \mu^{d-1 - \frac{d}{r}}.
\end{align*}

Now we can combine all the estimates and conclude
\[
\begin{split}
\| R_{\Osc} \|_{L^1(0,1; L^r(\TT^d))} & \leq C_u \left( \sigma^{-1}   \mu^{d- 1-\frac{d-1}{r}}+  \nu^{-1}   + \mu^{d-1 - \frac{d}{r}}  \right).
\end{split}
\]
Thanks to \eqref{eq:def_of_r}, we have
\[
\| R_{\Osc} \|_{L^1(0,1; L^r(\TT^d))} \leq C_u   \l^{-  \gamma}   .
\]
And thus for $\lambda$ large enough, the desired bound holds:
\[
\| R_{\Osc} \|_{L^1(0,1; L^r(\TT^d))} \leq \frac{\delta}{4}.
\]

\end{proof}

\appendix

\section{$X^{p,q}$ weak solutions on the torus}\label{sec:append_weak}

In this section, we show that sub-critical and critical weak solutions in $ X^{p,q}([0,T] ; \TT^d)$ are in fact Leray-Hopf. In particular, by the weak-strong uniqueness of Ladyzhenskaya-Prodi-Serrin, this implies the uniqueness part of Theorem \ref{thm:FJR_uniqueness}. The content of Theorem \ref{thm:critical_Leray-Hopf} is classical~\cite{MR316915,MR760047,MR1813331,MR1876415} and we include a proof in the regime $q>2$ for the convenience of the readers. Note that the proof applies to the case $q=\infty$ which is most relevant to the results of this paper, but was omitted in \cite{MR316915}.

\begin{theorem}\label{thm:critical_Leray-Hopf}
Let $d \geq 2$ be the dimension and $u \in X^{p,q}([0,T] ; \TT^d)$ be a weak solution of \eqref{eq:NSE} with $\frac{2}{p} + \frac{d}{q} = 1$, $d \leq q \leq \infty$. Then $u$ is a Leray-Hopf solution.
\end{theorem}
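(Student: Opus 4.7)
The goal is to upgrade a weak solution $u\in X^{p,q}$ at the critical scale $\tfrac{2}{p}+\tfrac{d}{q}=1$, $d\leq q\leq\infty$, to a Leray--Hopf solution. The strategy is to isolate the nonlinear part of $u$ via the mild formulation and close an energy estimate on it.

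First, by the equivalence \eqref{eq:NSE_integral_formulation} (cf.\ \cite[Theorem~2.1]{MR316915}), $u$ satisfies the mild formulation and splits as $u=u_L+u_N$ where $u_L := e^{t\Delta}u_0$ and $u_N(t) := \int_0^t e^{(t-s)\Delta}\mathbb{P}\D(u\otimes u)(s)\,ds$. Since $u_0\in L^2(\TT^d)$, Fourier analysis on the torus gives $u_L\in C([0,T];L^2)\cap L^2(0,T;H^1)$ together with the heat energy identity $\tfrac12\|u_L(t)\|_2^2+\int_0^t\|\nabla u_L(s)\|_2^2\,ds = \tfrac12\|u_0\|_2^2$.

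Next I would test the equation $\partial_t u_N-\Delta u_N = -\mathbb{P}\D(u\otimes u)$ with $u_N$ itself, justified via Friedrichs time mollification, to obtain
\[
\tfrac12\|u_N(t)\|_2^2 + \int_0^t\|\nabla u_N(s)\|_2^2\,ds = \int_0^t\!\int_{\TT^d}(u\otimes u):\nabla u_N\,dx\,ds.
\]
The right-hand side is bounded by H\"older's inequality, putting one copy of $u$ in $L^q$ and the other in $L^{2q/(q-2)}$ (interpolated through Gagliardo--Nirenberg between $L^2$ and $H^1$ when $q<4$); the critical relation $\tfrac{2}{p}+\tfrac{d}{q}=1$ ensures that the time integral is controlled by a polynomial in $\|u\|_{X^{p,q}}$. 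In the endpoint $q=\infty$ this reduces to the direct bound $\|u\|_\infty\|u\|_2\|\nabla u_N\|_2$. After Young's inequality to absorb $\|\nabla u_N\|_2^2$ and the substitution $\|u\|_2\leq\|u_0\|_2+\|u_N\|_2$, Gr\"onwall yields $u_N\in L^\infty(0,T;L^2)\cap L^2(0,T;H^1)$, so that $u$ itself lies in the Leray--Hopf regularity class.

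Finally, with $u\in L^\infty(0,T;L^2)\cap L^2(0,T;H^1)$ the convection $(u\cdot\nabla)u$ lies in the dual of the energy space, so the full equation can be tested rigorously with $u$; incompressibility kills the trilinear term and produces the energy identity
\[
\tfrac12\|u(t)\|_2^2 + \int_0^t\|\nabla u(s)\|_2^2\,ds = \tfrac12\|u_0\|_2^2,
\]
which is stronger than \eqref{eq:energy_inequality}. Weak continuity $u\in C_w([0,T];L^2)$ then follows from $u\in L^\infty(0,T;L^2)$ and the control of $\partial_t u$ in a negative Sobolev space via the equation. The main obstacle I expect is the rigorous justification of the formal energy equality for $u_N$ at the endpoint $q=\infty$, $p=2$, where Gagliardo--Nirenberg degenerates and one must argue directly with $\|u\|_{L^2L^\infty}$; an alternative route avoiding this delicacy would be to approximate $u_0\in L^2$ by smooth data, construct the corresponding smooth (hence Leray--Hopf) solutions, and pass to the limit using the uniqueness in $X^{p,q}$ from part~(1) of Theorem~\ref{thm:FJR_uniqueness}.
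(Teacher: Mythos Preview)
Your approach differs substantially from the paper's, and the central step contains a circularity you have not resolved. When you bound $\int (u\otimes u):\nabla u_N$ by placing one copy of $u$ in $L^{2q/(q-2)}$ and interpolating via Gagliardo--Nirenberg between $L^2$ and $H^1$, the interpolation produces $\|\nabla u\|_2$, not $\|\nabla u_N\|_2$; but $u\in L^2_tH^1$ is precisely the conclusion you are after. Friedrichs mollification in time does not help: any a priori bound on the mollified object still requires the unmollified $\|\nabla u\|_{L^2_{t,x}}$ on the right-hand side. Interpolating $L^{2q/(q-2)}$ directly between $L^2$ and $L^q$ instead (avoiding $H^1$) only closes the Gr\"onwall loop for $q\le d+2$, so it does not cover the full range. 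You also have the endpoints reversed: $q=\infty$, $p=2$ is the easiest case (no interpolation is needed, as you yourself note), whereas $q=d$, $p=\infty$ is where Young's inequality degenerates and one must use continuity in time to split $u=u_1+u_2$ with $\|u_1\|_{C_tL^d}$ small; the case $q=d=2$ needs a separate treatment altogether.

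The paper avoids all of this via a duality argument after \cite{MR1876415}. One first constructs by Galerkin a solution $v\in C_wL^2\cap L^2H^1$ of the \emph{linearized} equation $\partial_t v-\Delta v+u\cdot\nabla v+\nabla p=0$ with data $u(0)$, satisfying the energy inequality; then one shows $u\equiv v$ by testing the equation for $w=u-v$ against the solution $\Phi$ of the backward dual problem $-\partial_t\Phi-\Delta\Phi-u\cdot\nabla\Phi+\nabla\chi=F$, $\Phi(T)=0$, with arbitrary smooth $F$. The regularity $\partial_t\Phi,\Delta\Phi,\nabla\chi\in L^s_{t,x}$ for some $s>1$ needed to justify this testing is obtained by an $H^1$ (or $H^{1/2}$ when $d=q=2$) energy estimate on $\Phi$ followed by maximal regularity of the heat equation; since $F$ is smooth and $\Phi$ starts from zero, there is no circularity. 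Your ``alternative route'' via approximation is closer in spirit but also incomplete as stated: smooth solutions from approximate data need not be global in $d\ge 3$, and identifying the limit with $u$ would require weak--strong uniqueness for distributional (not Leray--Hopf) solutions in $X^{p,q}$, which is exactly what the duality argument is supplying.
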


We prove Theorem \ref{thm:critical_Leray-Hopf} for $q>2$ only. The case $d=q=2$,  as discussed in
\cite{MR1891170}, can be handled by the argument of \cite{MR1813331} in 2D. The method we present here follows the duality approach in \cite{MR1876415} and use only classical ingredients.

The first ingredient is an existence result for a linearized Navier-Stokes equation.
\begin{theorem}\label{thm:Leray-Hopf_v}
Let $u \in X^{p,q}([0,T] ; \TT^d)$ be a weak solution of \eqref{eq:NSE} with $\frac{2}{p} + \frac{d}{q} = 1$. For any divergence-free $v_0 \in L^2(\TT^d)$, there exists a weak solution $v \in C_w L^2 \cap L^2_t  H^1$ to the linearized Navier-Stokes equation:
\begin{equation}\label{eq:eq_for_v}
\begin{cases}
\p_t v  -\Delta v + u \cdot \nabla v + \nabla p = 0 &\\
\D v =0,
\end{cases}
\end{equation}
satisfying the energy inequality
\[
\frac{1}{2}\|v(t)\|_2^2+ \int_{t_0}^t \|\nabla v(s)\|_2^2 \, ds\leq \frac{1}{2}\|v(t_0)\|_2^2 , 
\]
for all $t\in [t_0,T]$, a.e. $t_0\in[0,T]$ (including $t_0=0$).
\end{theorem}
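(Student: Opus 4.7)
The plan is a standard Galerkin approximation, which is substantially easier here than for the full Navier--Stokes system because \eqref{eq:eq_for_v} is linear in $v$: once uniform bounds are in hand, the weak formulation passes to the limit directly, and the energy inequality drops out of the Galerkin energy identity by weak lower semicontinuity. Strong compactness is needed only to identify $v(t_0)$ as the strong $L^2$ limit of the Galerkin approximants at a.e.\ $t_0>0$.

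Let $\{e_k\}_{k\ge 1}$ be an $L^2$-orthonormal basis of mean-zero, divergence-free eigenfunctions of the Stokes operator on $\TT^d$, and set $V_N=\mathrm{span}(e_1,\dots,e_N)$. First I would seek $v_N(t)=\sum_{k\le N}c_k^N(t)\,e_k\in V_N$ with $c_k^N(0)=\langle v_0,e_k\rangle$ solving
\begin{equation*}
\tfrac{d}{dt}\langle v_N,e_k\rangle+\langle\nabla v_N,\nabla e_k\rangle+\langle u\cdot\nabla v_N,e_k\rangle=0,\qquad k=1,\dots,N.
\end{equation*}
This is a linear Carath\'eodory ODE system with coefficients in $L^p(0,T)$, uniquely solvable in $W^{1,1}(0,T;V_N)$. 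Testing against $v_N$ and using $\D u=0$ to cancel the transport term yields the exact energy identity
\begin{equation*}
\tfrac12\|v_N(t)\|_2^2+\int_{t_0}^t\|\nabla v_N\|_2^2\,ds=\tfrac12\|v_N(t_0)\|_2^2,\qquad 0\le t_0\le t\le T,
\end{equation*}
so $\{v_N\}$ is uniformly bounded in $L^\infty(0,T;L^2)\cap L^2(0,T;H^1)$.

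To extract compactness I would use the equation in the form $\partial_t v_N=P_N(\Delta v_N-\mathbb{P}\,\D(u\otimes v_N))$. By H\"older combined with the parabolic interpolation $v_N\in L^{\tilde p}L^{\tilde q}$, $\tfrac{2}{\tilde p}+\tfrac{d}{\tilde q}=\tfrac{d}{2}$, the tensor $u\otimes v_N$ lies in some $L^r_t L^s_x$ with $r>1$; the endpoint $q=\infty$, $p=2$ is the tightest case, but there $u\otimes v_N\in L^1 L^2$ directly. Hence $\partial_t v_N$ is uniformly bounded in a negative-order dual space, and Aubin--Lions supplies a subsequence with $v_N\to v$ strongly in $L^2_{t,x}$, weakly in $L^2H^1$, weakly-$*$ in $L^\infty L^2$, and (by a standard equicontinuity argument using the $\partial_t$ bound) uniformly in weak $L^2$, so $v\in C_w([0,T];L^2)$ and $v(0)=v_0$. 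Thanks to linearity, every term in the weak formulation passes to the limit under weak-$*$ $L^\infty L^2$ convergence: for the transport term this uses $u\cdot\nabla\varphi\in L^pL^q\subset L^1 L^2$, since $q\ge 2$ and $T<\infty$.

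For the energy inequality, fix $t\in[0,T]$: weak $L^2$ convergence at every time gives $\|v(t)\|_2^2\le\liminf_N\|v_N(t)\|_2^2$, and weak $L^2H^1$ convergence gives $\int_{t_0}^t\|\nabla v\|_2^2\le\liminf_N\int_{t_0}^t\|\nabla v_N\|_2^2$. At $t_0=0$ one has $\|P_Nv_0\|_2\to\|v_0\|_2$, yielding the bound directly; for a.e.\ $t_0>0$, strong $L^2_{t,x}$ convergence forces $\|v_N(t_0)-v(t_0)\|_2\to 0$ along a further subsequence, giving the inequality at a.e.\ $t_0$. The main (modest) obstacle is the compactness estimate on $\partial_t v_N$ in the endpoint case $q=\infty$, $p=2$; beyond that, the linearity of \eqref{eq:eq_for_v} makes every step a routine weak-convergence manipulation.
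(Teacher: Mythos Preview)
Your proposal is correct and follows precisely the approach the paper indicates: the paper's proof consists of a single sentence deferring to the ``standard Galerkin method'' and citing textbooks (Robinson--Rodrigo--Sadowski, Chapter~4, and Constantin--Foias), so you have simply written out the details the authors omit. The key observation that linearity in $v$ removes the need for strong compactness when passing to the limit in the nonlinear term, and that $\D u=0$ kills the transport term in the energy identity, is exactly what makes this routine; your handling of the endpoint $q=\infty$, $p=2$ is also fine (indeed $u\otimes v_N\in L^2_tL^2_x$ there, which is better than the $L^1L^2$ you quote).
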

\begin{proof}
This follows by a standard Galerkin method and can be found in many textbooks. See~\cite[Chapter 4]{MR3616490} or \cite{MR1284206} for details.
\end{proof}

Let $v$ be the weak solution given by Theorem \ref{thm:Leray-Hopf_v} with initial data $u(0)$. The goal is to show $u \equiv v$. Setting $w = u -v$, the equation for $w$ reads
\begin{equation*}
\p_t w  -\Delta w + u \cdot \nabla w + \nabla q = 0 ,
\end{equation*}
and its weak formulation
\begin{equation}\label{eq:difference_w_weak_formulation}
  \int_0^T \int_{\TT^d} w\cdot ( \partial_t \varphi + \Delta \varphi + u\cdot  \nabla   \varphi     ) \, dx dt =0 \quad \text{for any $\varphi \in \mathcal{D}_T$},
\end{equation}
where we recall that the test function class $\mathcal{D}_T $ consists of smooth divergence-free functions vanishing for $t \geq T$.

Fix $F \in C^\infty_c([0,T] \times \TT^d)$. Let $\Phi:[0,T]\times \TT^d \to \RR^d$ and $\chi:[0,T]\times \TT^d \to \RR  $ satisfy the system of equations
\begin{equation}\label{eq:dual_problem_Phi}
\begin{cases}
-\p_t \Phi -\Delta \Phi - u \cdot \nabla \Phi + \nabla \chi = F&\\
\D \Phi =0 &\\
\Phi (T) =0.
\end{cases}
\end{equation}
Note that the equation of $ \Phi$ is ``backwards in time'' and  by a change of variable one can convert \eqref{eq:dual_problem_Phi} into a more conventional form.

If we can use $\varphi=\Phi$ as the test function in the weak formulation \eqref{eq:difference_w_weak_formulation}, then immediately
$$
\int_{[0,T] \times \TT^d} w \cdot F \, dx dt = 0 .
$$
Since $F \in C^\infty_c([0,T] \times \TT^d)$ is  arbitrary, we have 
$$
w = 0 \quad \text{for $a.e.$ $(t,x) \in [0,T] \times \TT^d$}.
$$

So the question of whether $u \equiv v$ reduces to showing a certain regularity of $\Phi$. More specifically, we can prove the following theorem.

\begin{theorem}\label{thm:appendix_reg_Phi}
Let $d \geq 2$ be the dimension and $u \in X^{p,q}([0,T] ; \TT^d)$ be a weak solution of \eqref{eq:NSE} with $\frac{2}{p} + \frac{d}{q} = 1$ with $q>2$. For any $F  \in C^\infty_c([0,T] \times \TT^d) $, the system \eqref{eq:dual_problem_Phi} has a weak solution $ \Phi \in L^\infty_t L^2 \cap L^2_t H^1$ such that $\Phi$ can be used as a test function in \eqref{eq:difference_w_weak_formulation}.

\end{theorem}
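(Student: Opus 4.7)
The plan is to reduce the backward problem to a forward Cauchy problem via the time reversal $\Psi(t,x):=\Phi(T-t,x)$, $\bar u(t,x):=u(T-t,x)$, $\bar F(t,x):=F(T-t,x)$, $\bar\chi(t,x):=\chi(T-t,x)$, which yields
\[
\p_t \Psi - \Delta \Psi - \bar u\cdot \nabla \Psi + \nabla\bar\chi = \bar F, \qquad \D\Psi = 0, \qquad \Psi(0)= 0,
\]
with $\bar u \in X^{p,q}$ and $\bar F \in C^\infty_c((0,T)\times\TT^d)$. I would first construct a weak solution $\Psi \in L^\infty(0,T;L^2)\cap L^2(0,T;H^1)$ by the standard Galerkin method on divergence-free eigenspaces of the Stokes operator. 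The only point to check is the energy identity, which works because the drift term vanishes when tested against $\Psi$:
\[
(\bar u\cdot\nabla\Psi, \Psi) = \tfrac{1}{2}\int_{\TT^d}\bar u\cdot\nabla|\Psi|^2\, dx = -\tfrac{1}{2}\int_{\TT^d}(\D \bar u)|\Psi|^2\, dx = 0.
\]
This gives $\|\Psi\|_{L^\infty L^2}^2 + 2\|\nabla\Psi\|_{L^2 L^2}^2 \lesssim \|\bar F\|_{L^1 L^2}^2$ uniformly in the Galerkin parameter, and standard weak compactness produces the desired weak solution, in parallel to Theorem~\ref{thm:Leray-Hopf_v} (cf.~\cite{MR3616490,MR1284206}).

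To upgrade the regularity I would appeal to mixed-norm maximal regularity for the non-stationary Stokes system (Solonnikov, Giga--Sohr): for any $1 < s_1, s_2 < \infty$ and $G \in L^{s_1}(0,T; L^{s_2}(\TT^d))$, the unique solution of $\p_t\Psi -\Delta\Psi + \nabla\bar\chi = G$, $\D\Psi = 0$, $\Psi(0) = 0$ satisfies
\[
\|\p_t\Psi\|_{L^{s_1}L^{s_2}} + \|\Delta\Psi\|_{L^{s_1}L^{s_2}} + \|\nabla\bar\chi\|_{L^{s_1}L^{s_2}} \lesssim \|G\|_{L^{s_1}L^{s_2}}.
\]
Applied with $G = \bar F + \bar u\cdot\nabla\Psi$, and since $\bar F$ is smooth, it suffices to produce some $s_1, s_2 > 1$ with $\bar u\cdot\nabla\Psi \in L^{s_1}L^{s_2}$. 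The H\"older estimate
\[
\|\bar u\cdot\nabla\Psi\|_{L^{s_1}L^{s_2}} \leq \|\bar u\|_{L^p L^q}\,\|\nabla\Psi\|_{L^2 L^2}, \qquad \tfrac{1}{s_1}=\tfrac{1}{p}+\tfrac{1}{2},\quad \tfrac{1}{s_2}=\tfrac{1}{q}+\tfrac{1}{2},
\]
settles every configuration with $p > 2$ and $q > 2$. Under $\frac{2}{p}+\frac{d}{q}=1$ with $d\leq q\leq\infty$, this leaves only the critical endpoints $q = \infty$ (so $p = 2$) and $(p,q,d)=(\infty,2,2)$.

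The main obstacle is therefore these critical endpoints, where the naive H\"older bound only yields $\bar u\cdot\nabla\Psi \in L^1 L^2$, which is insufficient for maximal regularity. To handle them I would rewrite the drift in divergence form $\bar u\cdot\nabla\Psi = \D(\bar u\otimes\Psi)$ (legitimate thanks to $\D \bar u = 0$) and invoke the divergence-form analogue of the above maximal regularity estimate, which requires only $\bar u\otimes\Psi$ in a suitable $L^{s_1}L^{s_2}$. Combined with the Ladyzhenskaya embedding $L^\infty L^2 \cap L^2 H^1 \hookrightarrow L^4 L^4$ in $d = 2$, and its general-dimension analogue $L^{p_\star}L^{q_\star}$ with $\frac{2}{p_\star}+\frac{d}{q_\star}=\frac{d}{2}$, one obtains integrability of $\Psi$ beyond $L^\infty L^2$. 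A short parabolic bootstrap then promotes $\nabla\Psi$ to a mixed-Lebesgue class $L^a L^b$ with $a, b > 2$, at which point the H\"older argument of the previous step closes with suitable $s_1, s_2 > 1$. Unwinding the time reversal yields the claimed regularity for $\Phi$ and $\chi$.
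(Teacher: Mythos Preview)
Your treatment of the generic range $p>2$, $q>2$ is correct and actually more economical than the paper's argument: the paper first upgrades to $L^\infty_tH^1\cap L^2_tH^2$ by testing the Galerkin system against $\Delta\Phi$ and using the trilinear estimate of Proposition~\ref{prop:appendix_trilinear_term_estiamte}, and only then invokes heat maximal regularity with $\nabla\Phi\in L^\infty_tL^2$. You bypass the intermediate $H^1$ bound and feed $\nabla\Psi\in L^2_tL^2$ directly into mixed-norm Stokes maximal regularity, which already yields admissible exponents in that range.

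The gap is at the two endpoints. Take $q=\infty$, $p=2$. Since $\bar u\in L^2_tL^\infty$ has time integrability exactly $L^2$, the product $\bar u\otimes\Psi$ can never exceed $L^2$ in time regardless of which Ladyzhenskaya class you place $\Psi$ in; the divergence-form maximal regularity therefore returns $\nabla\Psi\in L^a_tL^b$ with $a\le 2$, and iterating only worsens the time exponent (each pass through $\bar u\otimes(\cdot)$ costs another H\"older factor of $L^2_t$). Hence $\bar u\cdot\nabla\Psi$ remains stuck at $L^1_t$ and your ``short parabolic bootstrap'' to $\nabla\Psi\in L^a_tL^b$ with $a>2$ does not close. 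An analogous obstruction in the space variable arises at $(p,q,d)=(\infty,2,2)$. The paper resolves both endpoints precisely through the $H^1$-level energy estimate you skipped: testing with $\Delta\Phi$ and applying Gr\"onwall yields $\nabla\Phi\in L^\infty_tL^2$, after which $u\cdot\nabla\Phi\in L^2_tL^2$ and maximal regularity applies directly. For $q=d=2$ the paper tests instead with $(-\Delta)^{1/2}\Phi$ to reach $L^\infty_tH^{1/2}\cap L^2_tH^{3/2}$, and in the critical case $p=\infty$, $q=d$ it further exploits the continuity $u\in C_tL^d$ via the decomposition $u=u_1+u_2$ with $\|u_1\|_{C_tL^d}$ small and $u_2\in L^\infty_{t,x}$, a device your sketch does not invoke.
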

\begin{proof}
We will prove the weak solution $\Phi$ satisfies the regularity 
\begin{equation}\label{eq:appendix_aux1}
\p_t \Phi ,\Delta\Phi ,u\cdot\nabla \Phi, \nabla \chi   \in
 L^2_{t,x}  ,
\end{equation}
which implies $ \Phi$ can be used in \eqref{eq:difference_w_weak_formulation} since $w \in L^2_{t,x}$.

The solution $ \Phi$ will be constructed by the Galerkin method of the following finite-dimensional approximation 
\begin{equation} \label{Galerkin}
\begin{cases}
-\p_t \Phi_n -\Delta \Phi_n - P_n\big[ u_n \cdot \nabla \Phi_n\big] + \nabla \chi_n = F_n &\\
\D \Phi_n =0 &\\
\Phi_n (T) =0,
\end{cases}
\end{equation}
where $\Phi_n,  u_n, \chi_n, F_n$ are restricted to the first $n$ Fourier modes, and $P_n$ is the projection operator on those modes.

It suffices to verify the following \emph{a priori} estimates as they are preserved in the limit as $n \to \infty$.  We also only need to show the estimates for $\p_t \Phi$, $ \Delta\Phi $,   and $u\cdot\nabla \Phi$ since the pressure $  \chi $ satisfies the equation
$$
\Delta \chi = \D ( u \cdot  \nabla \Phi)+ \D F.
$$

\noindent
{\bf Step 1:} Energy bounds $ L^\infty_t L^2 \cap L^2_t H^1 $.

The solutions to the Galerkin approximation \eqref{Galerkin} enjoy the energy estimate
$$
-\frac{1}{2} \frac{d}{dt} \| \Phi \|_2^2 + \| \nabla \Phi \|_2^2 \leq \int_{\TT^d} |F \cdot \Phi|\,dx,
$$
which implies the desired energy bounds.

\noindent
{\bf Step 2:} Higher bounds $ L^\infty_t H^1 \cap L^2_t H^2 $.

We now take the $L^2$ inner product of \eqref{Galerkin} with $\Delta \Phi$ to obtain
$$
-\frac{1}{2} \frac{d}{dt} \| \nabla \Phi \|_2^2 + \| \Delta \Phi \|_2^2 \leq \int_{\TT^d} |F \cdot \Delta \Phi|\,dx  + \int_{\TT^d} |u \cdot \nabla \Phi  \cdot \Delta \Phi |\,dx.
$$

\begin{itemize}
    \item {\bf Case 1:} $ p<\infty$ and $d<q\leq \infty$.

Thanks to Proposition \ref{prop:appendix_trilinear_term_estiamte}, 
\begin{equation}\label{eq:appendix_nonlinear}
\int_{\TT^d} |u \cdot \nabla \Phi  \cdot \Delta \Phi |\,dx \lesssim   \|u  \|_q \| \nabla \Phi \|_2^{ \frac{q-d}{q}} \|  \Delta \Phi \|_2^{ \frac{q+d}{q}}.  
\end{equation}
Then H\"older's, Poincar\'e's, and Young's inequalities yield
\[
-\frac{1}{2} \frac{d}{dt} \| \nabla \Phi \|_2^2 + \frac{1}{2}\| \Delta \Phi \|_2^2 \lesssim \|u\|_q^{\frac{2q}{q-d}}\|\nabla \Phi\|_2^2 + \|F\|_2^2.
\]
Thanks to the integrability of $\|u\|_q^{\frac{2q}{q-d}}$ ($\|u\|_\infty^2$ when $q=\infty$),  Gronwall's inequality immediately implies that $\Phi\in L^\infty_t H^1 \cap L^2_t H^2$.

\item  {\bf Case 2:} $ p=\infty$ and $q=d > 2$.

Since $u \in C_t L^{d}$, for any $\ep>0$ there exists a decomposition $u = u_1  +u_2$ such that
$$
\| u_1 \|_{C_t L^{d} } \leq \ep\quad \text{ and }\quad  u_2 \in L^\infty_{t,x},
$$
and then the $u_1$ portion of the nonlinear term in \eqref{eq:appendix_nonlinear} can be absorbed by $\| \Delta \Phi \|_2^2$, so we arrive at the same conclusion.

\end{itemize}



\noindent
{\bf Step 3:} Conclusion from maximal regularity of the heat equation.

Taking Leary's projection $\mathbb{P}$ onto the divergence-free vector fields, the equation for $\Phi$ can be rewritten as
\begin{equation*}
\begin{cases}
-\p_t \Phi -\Delta \Phi  = \mathbb{P} (u \cdot \nabla \Phi) +  \mathbb{P} F&\\
\Phi (T) =0.
\end{cases}
\end{equation*}
Therefore, by the maximal $L^p_t L^q$ regularity of the heat equation  (see for instance~\cite[Theorem 5.4]{MR3616490}), we only need to show the estimates \eqref{eq:appendix_aux1} for $u \cdot  \nabla  \Phi$ to conclude the proof.
By Sobolev interpolations we have $\nabla \Phi \in L^r L^s $ for any $\frac{2}{r} + \frac{d}{s} =\frac{d}{2} $ such that $ 2\leq s \leq \frac{2d}{d-2}$. We can find $r,s$ in this regime that satisfy the H\"older relations $ \frac{1}{p} + \frac{1}{r}  = \frac{1}{2}$ and $ \frac{1}{q} + \frac{1}{s}  = \frac{1}{2}$, where recall $\frac{2}{p} + \frac{d}{q} =1$. Since $ u \in L^{p}_t L^q$ with some $\frac{2}{p} + \frac{d}{q} =1$, this choice of $r,s$ implies that
$$
u \cdot  \nabla  \Phi \in L^2([0,T] \times \TT^d ) .
$$

\end{proof}

The last result is a classical estimate of the nonlinear term, cf. \cite[pp. 172]{MR3616490}. Notice that one of the embedding fails when $q=2$ which is the reason we can only prove Theorem \ref{thm:appendix_reg_Phi} for $q > 2$.

\begin{proposition}\label{prop:appendix_trilinear_term_estiamte}
Let $ d \geq 2$ be the dimension and $d   \leq q \leq \infty $ such that $q > 2$. For any smooth vector fields $u,v \in C^\infty_0(\TT^d)$,
$$
\int_{\TT^d} |u \cdot \nabla v \cdot  \Delta v  | \,dx \lesssim  \| u \|_{q}  \| \nabla  v \|_{2}^{\frac{q-d}{q}}      \| \Delta  v \|_{2}^{\frac{q+d}{q}}.
$$

\end{proposition}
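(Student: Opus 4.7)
The plan is to reduce the trilinear estimate to Hölder's inequality followed by a Gagliardo--Nirenberg interpolation for $\nabla v$. Since $q > 2$, we can define the conjugate exponent $q'$ by $\frac{1}{q'} = \frac{1}{2} - \frac{1}{q}$, so that $(q, q', 2)$ is a Hölder triple. Applying Hölder's inequality to the triple product yields
\[
\int_{\TT^d} |u\cdot \nabla v \cdot \Delta v|\, dx \;\leq\; \|u\|_q\, \|\nabla v\|_{q'}\, \|\Delta v\|_2.
\]
It therefore suffices to prove the interpolation bound $\|\nabla v\|_{q'} \lesssim \|\nabla v\|_2^{(q-d)/q}\|\Delta v\|_2^{d/q}$.

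To obtain this bound, I would appeal to the standard Gagliardo--Nirenberg interpolation applied to the vector field $w = \nabla v$ on $\TT^d$. The scaling requirement is $\frac{1}{q'} = (1-\theta)\frac{1}{2} + \theta\bigl(\frac{1}{2} - \frac{1}{d}\bigr)$, which gives $\theta = \frac{d}{q}$; this lies in $(0,1]$ precisely because $d\leq q$, and equals $1$ only in the Sobolev embedding endpoint $q = d$ (with $d\geq 3$, since $q > 2$ rules out $d=q=2$). One therefore gets $\|\nabla v\|_{q'} \lesssim \|\nabla v\|_2^{1-d/q}\|\nabla^2 v\|_2^{d/q}$. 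The replacement $\|\nabla^2 v\|_2 \lesssim \|\Delta v\|_2$ on the torus is immediate from Parseval: for each component $v^j$ with zero mean, $\|\partial_i\partial_k v^j\|_2^2 = \sum_{\xi\neq 0}\xi_i^2\xi_k^2|\hat v^j(\xi)|^2 \leq \sum_{\xi\neq 0}|\xi|^4|\hat v^j(\xi)|^2 = \|\Delta v^j\|_2^2$.

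Combining these two steps produces
\[
\int_{\TT^d} |u\cdot \nabla v \cdot \Delta v|\, dx \;\lesssim\; \|u\|_q\, \|\nabla v\|_2^{(q-d)/q}\|\Delta v\|_2^{d/q}\,\|\Delta v\|_2 \;=\; \|u\|_q\,\|\nabla v\|_2^{(q-d)/q}\|\Delta v\|_2^{(q+d)/q},
\]
which is the desired inequality. The case $q=\infty$ reduces to trivially $\|\nabla v\|_{2}\leq \|\nabla v\|_2$ after pulling $u$ out in $L^\infty$, and the endpoint $q=d\geq 3$ coincides with the Sobolev embedding $\dot H^1(\TT^d)\hookrightarrow L^{2d/(d-2)}(\TT^d)$.

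I do not anticipate a real obstacle here: the argument is a direct composition of Hölder, Gagliardo--Nirenberg, and the torus Calder\'on--Zygmund identity for $\Delta$. The only point requiring some care is the exclusion of $q=d=2$, which is where Gagliardo--Nirenberg would force $\theta=1$ and demand the (false) embedding $H^1(\TT^2)\hookrightarrow L^\infty(\TT^2)$; this is exactly why the hypothesis $q>2$ is imposed and why the preceding Theorem~\ref{thm:appendix_reg_Phi} treats $d=q=2$ by a separate $H^{1/2}$--$H^{3/2}$ argument.
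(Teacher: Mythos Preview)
Your proof is correct and essentially coincides with the paper's: the paper applies H\"older with the same triple $(q, r, 2)$ where $r = \frac{2q}{q-2}$ is your $q'$, then uses the Sobolev embedding $H^s \hookrightarrow L^r$ with $s = d(\tfrac{1}{2}-\tfrac{1}{r}) = \tfrac{d}{q}$ followed by the interpolation $\|\nabla v\|_{H^s} \lesssim \|\nabla v\|_2^{(q-d)/q}\|\Delta v\|_2^{d/q}$, which is exactly your Gagliardo--Nirenberg step phrased in two stages. The remark on the excluded case $q=d=2$ is also consistent with the paper's treatment.
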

\begin{proof}
We apply H\"older's inequality with exponents $\frac{1}{q} + \frac{1}{r} + \frac{1}{2} =1$, $r = \frac{2q}{q-2} \in [2 , \frac{2d}{d-2})$, 
\begin{align*}
\int_{\TT^d} |u \cdot \nabla v \cdot   \Delta v  | \,dx \lesssim \| u \|_{q}  \| \nabla  v \|_{r}        \|\Delta  v \|_{ 2} .
\end{align*}
Since $ 2 \leq r< \infty$, by the  Sobolev embedding  $H^{s}(\TT^d )  \hookrightarrow   L^{ r} (\TT^d )$, $s = d( \frac{1}{2} - \frac{1}{r})$,
\begin{align*}
  \| \nabla  v \|_{r}   \lesssim \|\nabla  v \|_{H^{s}}  .
\end{align*}
Finally, by a standard Sobolev interpolation and the $L^2$-boundedness of Riesz transform, we have
\begin{align*}
 \|\nabla  v \|_{H^{s}}    \lesssim   \| \nabla v \|_{2}^{\frac{q- d}{q}}\| \Delta   v \|_{2}^{\frac{d}{q} },
\end{align*}
which concludes the proof.
\end{proof}

\section{Some technical tools}\label{sec:append_tech}
\subsection{Improved H\"older's inequality on $\TT^d$}
We recall the following result due to Modena and Sz\'ekelyhidi \cite{MR3884855}, which was inspired by \cite[Lemma 3.7]{MR3898708}. This lemma allows us to quantify the decorrelation in the usual H\"older's inequality when we increase the oscillation of one function.

\begin{lemma}\label{lemma:improved_Holder}
Let $p \in [1,\infty]$ and $a,f :\TT^d \to \RR$ be smooth functions. Then for any $\sigma \in \NN$  ,
\begin{equation}
\Big|   \|a f(\sigma \cdot ) \|_{p }  - \|a \|_{p} \| f \|_{p } \Big|\lesssim \sigma^{-\frac{1}{p}} \| a\|_{C^1} \| f \|_{ p }.
\end{equation}
\end{lemma}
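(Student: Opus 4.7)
The plan is to exploit the mismatch of scales: $f(\sigma \cdot)$ oscillates at scale $\sigma^{-1}$, while $a$ is slowly varying on scale $1$. Partitioning $\TT^d$ into $\sigma^d$ congruent cubes $Q_j = \sigma^{-1}(j + [0,1)^d)$ indexed by $j \in \{0,\ldots,\sigma-1\}^d$, on each $Q_j$ the function $a$ is nearly constant up to an error $\sigma^{-1}\|a\|_{C^1}$, while $f(\sigma\cdot)$ traces out exactly one full period of $f$. The heuristic
\[
\int_{\TT^d}|a f(\sigma\cdot)|^p\,dx \;\approx\; \sum_j |a(x_j)|^p \int_{Q_j}|f(\sigma x)|^p\,dx \;=\; \sigma^{-d}\|f\|_p^p \sum_j |a(x_j)|^p \;\approx\; \|a\|_p^p\|f\|_p^p
\]
is the whole content of the lemma, and both approximations carry errors of order $\sigma^{-1}\|a\|_{C^1}$.

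Consider first $1 \leq p<\infty$ and fix a point $x_j\in Q_j$. By the mean value theorem applied to $t\mapsto t^p$, one has $\bigl||a(x)|^p - |a(x_j)|^p\bigr|\leq p\|a\|_\infty^{p-1}\,\sigma^{-1}\|a\|_{C^1}$ on $Q_j$. Integrating against $|f(\sigma x)|^p$ and using periodicity $\int_{Q_j}|f(\sigma x)|^p\,dx = \sigma^{-d}\|f\|_p^p$, then summing in $j$, yields
\[
\Bigl|\|a f(\sigma\cdot)\|_p^p - \|f\|_p^p\,\sigma^{-d}\sum_j |a(x_j)|^p\Bigr| \;\lesssim_p\; \|a\|_\infty^{p-1}\|a\|_{C^1}\,\sigma^{-1}\,\|f\|_p^p.
\]
The Riemann-sum remainder $\bigl|\sigma^{-d}\sum_j|a(x_j)|^p - \|a\|_p^p\bigr|$ is controlled by the Lipschitz norm of $|a|^p$ times the mesh size $\sigma^{-1}$, i.e.\ by $p\|a\|_\infty^{p-1}\|a\|_{C^1}\sigma^{-1}$, and can be absorbed into the same estimate.

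The final step is to pass from $p$-th powers to norms via the elementary inequality $|A^{1/p}-B^{1/p}|\leq |A-B|^{1/p}$ valid for $A,B\geq 0$ and $p\geq 1$. Applied to the previous bound, this gives
\[
\Bigl|\|a f(\sigma\cdot)\|_p - \|a\|_p\|f\|_p\Bigr| \;\lesssim_p\; \|a\|_\infty^{(p-1)/p}\|a\|_{C^1}^{1/p}\,\sigma^{-1/p}\,\|f\|_p \;\leq\; C_p\,\sigma^{-1/p}\,\|a\|_{C^1}\,\|f\|_p,
\]
where in the last step we used $\|a\|_\infty \leq \|a\|_{C^1}$. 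For $p=\infty$ the exponent $\sigma^{-1/p}$ degenerates to $1$, and the inequality is trivial since $\|a f(\sigma\cdot)\|_\infty\leq \|a\|_\infty\|f\|_\infty\leq \|a\|_{C^1}\|f\|_\infty$.

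The one subtle point---and the reason the $|A^{1/p}-B^{1/p}|\leq |A-B|^{1/p}$ step must be applied at the right moment---is to avoid accumulating a factor of $\|a\|_{C^1}^p$ on the right-hand side: the linearization of $t\mapsto t^p$ produces the combination $\|a\|_\infty^{p-1}\|a\|_{C^1}$, which becomes linear in $\|a\|_{C^1}$ only after taking the $p$-th root. Everything else in the argument is a routine Riemann-sum estimate leveraging the disparity between the scales $\sigma^{-1}$ and $1$.
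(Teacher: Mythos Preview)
Your proof is correct and follows essentially the same route as the argument the paper defers to \cite[Lemma~2.1]{MR3884855}: partition $\TT^d$ into $\sigma^d$ sub-cubes of side $\sigma^{-1}$, freeze $a$ on each cube at the cost of a $C^1$ error of order $\sigma^{-1}$, use that $f(\sigma\cdot)$ runs through one full period on each cube, and pass from $p$-th powers back to norms via subadditivity of $t\mapsto t^{1/p}$. The paper's one-line remark about ``Poincar\'e's inequality'' is just the control of the oscillation of $a$ on a small cube by $\sigma^{-1}\|\nabla a\|_\infty$, which you invoke through the mean value theorem; there is no substantive difference.
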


The proof is based on the interplay between the Poincare's inequality and the fast oscillation of $f (\sigma \cdot )$ and can be found in \cite[Lemma 2.1]{MR3884855}.

\subsection{Tensor-valued antidivergence $\mathcal{R}$}

For any $f\in C^\infty(\TT^d)$, there exists a $v \in C^\infty_0(\TT^d)$ such that
$$
\Delta v = f - \fint_{\TT^d} f.
$$
And we denote $v$ by $\Delta^{-1}f$. Note that if $f\in C^\infty_0(\TT^d) $, then by rescaling  we have 
$$
\Delta^{-1} \big( f(\sigma \cdot )  \big) = \sigma^{-2}  v(\sigma \cdot ) \quad \text{ for $\sigma \in \NN$.}
$$

We recall the following antidivergence operator $\mathcal{R}$ introduced in \cite{MR3090182}.

\begin{definition}
$\mathcal{R} : C^\infty(\TT^d ,\RR^d) \to C^\infty(\TT^d, \mathcal{S}^{d \times d }_0)$ is defined by
\begin{equation}\label{eq:appendix_R_def}
(\mathcal{R} v )_{ij} =  \mathcal{R}_{ijk} v_k
\end{equation}
where
$$
 \mathcal{R}_{ijk} =  \frac{2-d}{d-1}\Delta^{-2} \p_i \p_j \p_k -  \frac{1}{d-1}\Delta^{-1} \p_k \delta_{ij} + \Delta^{-1} \p_i \delta_{jk} + \Delta^{-1} \p_j \delta_{ik} .
$$
\end{definition}

It is clear that $\mathcal{R} $ is well-defined since $\mathcal{R}_{ijk}$ is symmetric in $i,j$ and taking the trace gives
\begin{align*}
\Tr    \mathcal{R} v&=  \frac{2-d}{  d -1} \Delta^{-1}    \partial_k v_k + \frac{-d}{d-1} \Delta^{-1} \partial_k   v_k  +   \Delta^{-1}   \partial_k v_k + \Delta^{-1}   \partial_k v_k\\
&=(\frac{2-d}{  d  -1} + \frac{-d}{d-1}+  2) \Delta^{-1}  \partial_k v_k=0.
\end{align*}

By a direct computation, one can also show that
$$
\D (\mathcal{R} v  ) = v - \fint_{\TT^d} v  \quad \text{for any $v \in C^\infty(\TT^d ,\RR^d)$}
$$
and
\begin{equation}\label{eq:appendix_R_2}
\mathcal{R} \Delta v   = \nabla v + \nabla v ^T  \quad \text{for any divergence-free $v \in C^\infty(\TT^d ,\RR^d)$}.
\end{equation}

We can show that $ \mathcal{R}$ is bounded on  $L^{p} (\TT^d) $ for any $1\leq p \leq \infty$.
\begin{theorem}\label{thm:bounded_R}
Let $1 \leq p \leq \infty$. For any vector field $f \in C^\infty(\TT^d,\RR^d)$, there holds
$$
\| \mathcal{R} f \|_{L^{p}(\TT^d )} \lesssim \|  f \|_{L^{p}(\TT^d )}.
$$

In particular, if $f \in C^\infty_0(\TT^d,\RR^d)$, then
$$
\| \mathcal{R} f(\sigma \cdot ) \|_{L^{p}(\TT^d )} \lesssim \sigma^{-1}\|  f \|_{L^{p}(\TT^d )} \quad \text{for any $\sigma \in \NN$}.
$$
\end{theorem}
\begin{proof}
Once the first bound is established, the second bound follows from the definition of $ \mathcal{R}$. It suffices to only consider $f  $ with zero mean since $\mathcal{R} (C) =0$ for any constant $C$. 
Then we only need to show that the operator 
$$
\Delta^{-1} \Delta^{-1} \p_i \p_j \p_k
$$ is bounded on $L^p(\TT^d)$ for $1\leq p \leq \infty$ since the argument applies also to $\Delta^{-1} \p_i $.

When $1 < p < \infty$, this follows from the boundedness of the Riesz transforms and the Poincare inequality.

When $p= \infty$, the Sobolev embedding $W^{1,d+1}(\TT^d) \hookrightarrow L^{\infty}(\TT^d )$ implies that
$$
\|\Delta^{-1} \Delta^{-1} \p_i \p_j \p_k f \|_{L^\infty(\TT^d) } \lesssim \|\Delta^{-1} \Delta^{-1} \p_i \p_j \p_k f \|_{  W^{1,d+1} (\TT^d) } \lesssim \| f  \|_{L^{d + 1}(\TT^d) }\leq   \| f  \|_{L^{\infty}  (\TT^d) }
$$
where we have used the boundedness of the Riesz transforms once again.

When $p= 1$,  one can use a duality approach and use the boundedness in $L^\infty$ since integrating by parts yields
$$
\langle \Delta^{-1} \Delta^{-1} \p_i \p_j \p_k f ,\varphi \rangle = -\langle  f ,\Delta^{-1} \Delta^{-1} \p_i \p_j \p_k \varphi \rangle  \quad \text{ if $ \varphi \in C^\infty_0(\TT^d)$.}
$$

\end{proof}

\subsection{Bilinear antidivergence $\mathcal{B}$}

We can also introduce the bilinear version $\mathcal{B} : C^\infty(\TT^d, \RR^d) \times C^\infty(\TT^d   ,\RR^{d\times d} ) \to C^\infty(\TT^d, \mathcal{S}^{d \times d}_0) $ of $\mathcal{R}$. This bilinear antidivergence $\mathcal{B}$ allows us to gain derivative when the later argument has zero mean and a small period.

Let
\begin{equation*}
( \mathcal{B}( v,A ))_{i j}  = v_l \mathcal{R}_{ijk}A_{lk} - \mathcal{R}( \p_i v_l \mathcal{R}_{ijk}A_{lk} ) 
\end{equation*}
or by a slight abuse of notations
$$
  \mathcal{B}( v,A )  = v  \mathcal{R} A  - \mathcal{R}( \nabla v  \mathcal{R} A   ) .
$$

\begin{theorem}\label{thm:bounded_B}
Let  $1 \leq p \leq \infty$. For any $v \in C^\infty(\TT^d,\RR^d)$ and $A \in C^\infty_0(\TT^d,\RR^{d\times d})$, 
\begin{equation} \label{eq:div_B_tensor}
\D( \mathcal{B}(v , A)  )  =  v   A   - \fint_{\TT^d} vA ,
\end{equation}
and
$$
\|  \mathcal{B} (v ,A) \|_{L^{p}(\TT^d )} \lesssim \|   v \|_{C^{1}(\TT^d )} \|   \mathcal{R} A \|_{L^{p}(\TT^d )}.
$$
\end{theorem}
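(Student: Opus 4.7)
The plan is to prove both claims by unwinding the defining formula
\[
\mathcal{B}(v,A) \;=\; v\,\mathcal{R}A \;-\; \mathcal{R}\bigl(\nabla v\cdot\mathcal{R}A\bigr)
\]
and invoking the two essential properties of $\mathcal{R}$ that were established just above: the antidivergence identity $\D(\mathcal{R}f) = f - \fint f$ and the $L^p$-boundedness $\|\mathcal{R}f\|_{L^p(\TT^d)} \lesssim \|f\|_{L^p(\TT^d)}$ for every $1\leq p\leq\infty$ from Theorem~\ref{thm:bounded_R}.

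For the divergence identity \eqref{eq:div_B_tensor}, I would first apply the Leibniz rule to the first summand to obtain
\[
\D\bigl(v\,\mathcal{R}A\bigr) \;=\; v\,\D(\mathcal{R}A) \;+\; \nabla v:\mathcal{R}A \;=\; v\bigl(A - \fint A\bigr) + \nabla v:\mathcal{R}A,
\]
where $\nabla v:\mathcal{R}A$ denotes the natural contraction of indices coming from the product rule. Applying the antidivergence identity to the second summand gives
\[
\D\mathcal{R}\bigl(\nabla v\cdot\mathcal{R}A\bigr) \;=\; \nabla v\cdot\mathcal{R}A \;-\; \fint \nabla v\cdot\mathcal{R}A.
\]
Subtracting, the two $\nabla v\cdot\mathcal{R}A$ terms cancel and one is left with
\[
\D\mathcal{B}(v,A) \;=\; vA \;-\; v\fint A \;+\; \fint\nabla v\cdot\mathcal{R}A.
\]
Since $A \in C^\infty_0(\TT^d;\mathcal{S}^{d\times d}_0)$ is by hypothesis mean-zero, the term $v\fint A$ vanishes identically. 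A single integration by parts, followed by a second use of $\D(\mathcal{R}A) = A$ (valid precisely because $\fint A = 0$), yields
\[
\fint \nabla v\cdot\mathcal{R}A \;=\; -\fint v\,\D(\mathcal{R}A) \;=\; -\fint vA,
\]
which closes \eqref{eq:div_B_tensor}.

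For the $L^p$ estimate I would combine the triangle and H\"older inequalities with the $L^p$-boundedness of $\mathcal{R}$ from Theorem~\ref{thm:bounded_R}, applied both to $\mathcal{R}A$ and to the outer $\mathcal{R}$ in the second summand:
\[
\|\mathcal{B}(v,A)\|_{L^p} \;\leq\; \|v\|_{L^\infty}\|\mathcal{R}A\|_{L^p} \;+\; \|\mathcal{R}(\nabla v\cdot\mathcal{R}A)\|_{L^p} \;\lesssim\; \bigl(\|v\|_{L^\infty} + \|\nabla v\|_{L^\infty}\bigr)\|A\|_{L^p},
\]
which is exactly the claimed bound $\|\mathcal{B}(v,A)\|_{L^p}\lesssim \|v\|_{C^1}\|A\|_{L^p}$, uniformly in $1\leq p\leq\infty$.

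The only subtle step, which I expect to be the main (still minor) obstacle, is the bookkeeping of the mean-zero hypothesis on $A$: without $\fint A = 0$ the stray term $v\fint A$ is not a constant in $x$ and cannot be absorbed into a scalar mean of $vA$. This observation is what forces $\mathcal{B}$ to be designed so as to act precisely on traceless, mean-zero tensors. Once this is kept straight, both assertions of the theorem reduce to a careful application of the two foundational properties of $\mathcal{R}$ recalled above.
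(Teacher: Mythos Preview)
Your proposal is correct and follows essentially the same approach as the paper's proof: apply the Leibniz rule to $\D(v\,\mathcal{R}A)$, cancel the $\nabla v\cdot\mathcal{R}A$ term against the main part of $\D\mathcal{R}(\nabla v\cdot\mathcal{R}A)$, and then integrate by parts to identify the leftover constant $\fint\nabla v\cdot\mathcal{R}A = -\fint vA$. The $L^p$ estimate is handled identically in both, via two applications of Theorem~\ref{thm:bounded_R} together with H\"older.
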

\begin{proof}
A direct compuation gives
\begin{align*}
\D( \mathcal{B}(v , A)  ) & = \p_j v_l \mathcal{R}_{ijk}A_{lk} +   v_l \p_j \mathcal{R}_{ijk}A_{lk} - \D \mathcal{R}( \p_i v_l \mathcal{R}_{ijk}A_{lk} ) \\
&=     v_l  A_{il}    + \fint \p_i v_l \mathcal{R}_{ijk}A_{lk}
\end{align*}
where we have used the fact that $A$ has zero mean and $\mathcal{R}$ is symmetric.

Integrating by parts, we have
\begin{align*}
 \fint \p_i v_l \mathcal{R}_{ijk}A_{lk} =  -\fint  v_l \p_i \mathcal{R}_{ijk}A_{lk} =-\fint  v_l A_{lj},
\end{align*}
which implies that
\begin{align*}
\D( \mathcal{B}(v , A)  )   
 =     v A -       \fint v A.
\end{align*}

The second estimate follows immediately from the definition of $\mathcal{B}$ and Theorem~\ref{thm:bounded_R}.

\end{proof}

\subsection*{Acknowledgement}
AC was partially supported by the NSF grant DMS--1909849. The authors are grateful to the anonymous referees for very helpful comments.

\bibliographystyle{alpha}
\bibliography{Sharp_NSE} 

\end{document}